\documentclass{amsart}
\usepackage{amsmath, amssymb, amsthm, url}

\input xy
\xyoption{all}

\newtheorem{thm}{Theorem}[section]
\newtheorem{lem}[thm]{Lemma}
\newtheorem{cor}[thm]{Corollary}

\newtheorem{prop}[thm]{Proposition}

\newtheorem{prob}[thm]{Problem}

\theoremstyle{definition}

\newtheorem{rmk}[thm]{Remark}
\newtheorem{defn}[thm]{Definition}
\newtheorem{notation}[thm]{Notation}

\newtheorem*{motiv}{Motivation}

\def\L{\mathcal{L}}
\def\M{\mathcal{M}}
\def\mN{\mathcal{N}}

\def\N{\mathbb{N}}
\def\Q{\mathbb{Q}}
\def\R{\mathbb{R}}
\def\Z{\mathbb{Z}}

\def\tp{\operatorname{tp}}
\def\qftp{\operatorname{qftp}}
\def\id{\operatorname{id}}
\def\alg{\operatorname{alg}}
\def\LO{\operatorname{LO}}
\def\DLO{\operatorname{DLO}}
\def\OAG{\operatorname{OAG}}
\def\ODAG{\operatorname{ODAG}}
\def\MODAG{\operatorname{MODAG}}
\def\MODDAG{\operatorname{div-MODAG}}
\def\OF{\operatorname{OF}}
\def\RCF{\operatorname{RCF}}
\def\near{\operatorname{near}}
\def\next{\operatorname{next}}

\begin{document}

\title{Model companion of ordered theories with an automorphism}

\author{Michael C. Laskowski}
\address{Department of Mathematics, University of Maryland, College Park, MD 20742}
\email{mcl@math.umd.edu}

\author{Koushik Pal}
\address{Department of Mathematics, University of Maryland, College Park, MD 20742}
\email{koushik@math.umd.edu}

\thanks{The authors are partially supported by Laskowski's NSF grant DMS-0901336.}

\subjclass[2000]{Primary 03C10, 03C64; Secondary 20K30, 20A05}

\date{Submitted May 31, 2013.}

\begin{abstract}
Kikyo and Shelah showed that if $T$ is a theory with the Strict Order Property in some first-order language $\L$, then in the expanded language $\L_\sigma := \L\cup\{\sigma\}$ with a new unary function symbol $\sigma$, the bigger theory $T_\sigma := T\cup\{``\sigma \mbox{ is an } \L\mbox{-automorphism''}\}$ does not have a model companion. We show in this paper that if, however, we restrict the automorphism and consider the theory $T_\sigma$ as the base theory $T$ together with a ``restricted'' class of automorphisms, then $T_\sigma$ can have a model companion in $\L_\sigma$. We show this in the context of linear orders and ordered abelian groups.
\end{abstract}

\maketitle

\section{Introduction}
A major development in the model theory of fields occurred when Chatzidakis and Hrushovski \cite{ZCEH}, and independently Macintyre \cite{AM}, showed that the theory of algebraically closed fields equipped with a generic automorphism has a model companion, namely the theory ACFA.  Shortly thereafter, a number of researchers tried to extend this result to more general theories.  Specifically, if $T$ is a model complete theory in a language $\L$, let $\L_\sigma$ be the expansion of $\L$  formed by adding a new unary function symbol $\sigma$, and let $T_\sigma$ be the theory of a generic automorphism. That is, $T_\sigma$ is the expansion of $T$ formed by adding axioms asserting that $\sigma$ describes an $\L$-automorphism, but with no other constraints. In this framework, one can ask for which theories $T$ does $T_\sigma$ have a model companion? Baldwin and Shelah~\cite{JBSS} gave a precise characterization of which stable theories $T$ have this property. As for unstable theories, a precise characterization is still not known, but Kikyo and Shelah~\cite{HKSS} proved that if the original theory $T$ has the strict order property, then the theory $T_\sigma$ cannot have a model companion in $\L_\sigma$. Some other results of a similar flavor can be found in \cite{HK}, \cite{HKAP} and \cite{ZCAP}.

At first blush, the Kikyo-Shelah result is disappointing, as it appears to rule out a good theory of `difference valued fields' since the theory of any field with a nontrivial valuation has the strict order property via the definable total ordering of the value group. However, in his thesis \cite{KP}, the second-named author noted that if one placed more restrictions on the automorphism, then it is sometimes possible for the  expanded theory to have a model companion.

In this paper, we continue this line of reasoning.  We begin with a theory $T$ that admits an infinite total order on the elements of any model (hence $T$ has the strict order property) and we ask which theories $T'_\sigma$ extending $T_\sigma$ have model companions. We accomplish this by asking the stronger question of identifying  the extensions $S_\sigma$ of $T_\sigma$ that are model complete. Then, if we have a complete list, a theory $T'_\sigma$ extending $T_\sigma$ has a model companion if and only if it has the same universal theory as one of the $S_\sigma$'s.

In Section 2, which is largely a warm up for our main results, we start with the theory of linear orders and add a generic automorphism. Whereas this theory does not have a model companion by Kikyo-Shelah, we show that if we insist that the automorphism is everywhere increasing, then the expanded theory does have a model companion.  Building on this, we give a complete enumeration of the (countably many) complete, model complete theories extending dense linear orders with a generic automorphism.  Along the way, and perhaps the most interesting point, is that we put our finger on an obstruction to model completeness (Theorem 2.6).  Indeed, this obstruction appears to be the phenomenon that is exploited by Kikyo and Shelah to obtain  their negative result.

The main sections of the paper are Sections 3 and 4, where we discuss expansions of ordered abelian groups.  In Section 3, we show that simply asserting that the automorphism is everywhere increasing on the positive elements of the group is not enough to give a model companion. The theory needs to be strong enough to assert more about the automorphism.  In Section 4, we revisit the theories of MODAGs and div-MODAGs that were introduced in the second-named author's thesis.  There, he proved that the theory of any div-MODAG is model complete.  However, we show in this paper that there are many more model complete expansions of divisible ordered abelian groups with an automorphism. We introduce the concept of an `$n$-sum' of div-MODAGs and prove that each of these has a model complete theory. Then, we investigate `$\omega$-sums' of div-MODAGs.  With Theorem~4.28 we prove that an $\omega$-sum has a model complete theory if and only if there is a `unique type at infinity'. It is insightful to note here how this property compares with Theorem 2.6 as to how the Kikyo-Shelah obstruction is eradicated. In the appendix, we give a quotient construction which produces more examples of model complete theories extending the theory of ordered abelian groups with an automorphism. Finally, in Section 5, we deal with the case of ordered fields with an automorphism and briefly discuss some of the difficulties in  obtaining a model complete expansion of $RCF_\sigma$.

In many ways, this paper is an extension of Chapter 3 of the second-named author's PhD thesis, written under the supervision of Thomas Scanlon.  We thank him for his insightful suggestions and helpful discussions on this topic.

\section{Linear Order with Increasing Automorphism}
Let $\LO$ be the theory of linear orders in the language $\L_{<} := \{<\}$, where $<$ is a binary relation symbol. As is well-known, the theory of linear orders has a model companion in $\L_{<}$, namely the theory of nontrivial dense linear orders without endpoints ($\DLO$). The goal is now to consider this structure with an automorphism $\sigma$ and see if one can get a model companion. As noted earlier, $\sigma$ cannot be a generic automorphism if one wants model companion to exist. So we must put some restriction on $\sigma$. Taking a hint from the proof of the Kikyo-Shelah theorem, one natural restriction that one can impose is the following.
\begin{defn}
An automorphism $\sigma$ on $\LO$ is said to be {\em increasing} if 
$$\forall x ( x < \sigma(x) ).$$
\end{defn}

We denote by $\LO^+_\sigma$ the theory of linear orders together with the axioms denoting ``$\sigma$ is an increasing $\L_{<}$-automorphism'' in the language $\L_{<, \sigma} := \{<, \sigma\}$. (There is an analogous theory $\LO^-_\sigma$ with decreasing automorphisms too.) We claim that $LO_\sigma^+$ has a model companion in $\L_{<, \sigma}$, namely, the theory of nontrivial dense linear orders with an increasing automorphism, which we denote by $\DLO^+_\sigma$. 

\begin{thm}
$\DLO^+_\sigma$ eliminates quantifiers and is the model completion of $\LO^+_\sigma$ in the language $\L_{<,\sigma}$. Moreover, $\DLO^+_\sigma$ is complete and o-minimal.
\end{thm}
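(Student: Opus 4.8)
The plan is to reduce everything to a single quantifier-elimination (QE) statement, from which model completeness, completeness, and o-minimality all follow formally. To prove QE I will use the embedding form of the QE test: it suffices to show that whenever $M,N\models\DLO^+_\sigma$ with $N$ sufficiently saturated (say $|M|^+$-saturated), $A$ is a common substructure of $M$ and $N$, and $a\in M$, there is $b\in N$ realizing $\qftp^M(a/A)$; equivalently, the map fixing $A$ and sending $\sigma^n(a)\mapsto\sigma^n(b)$ for all $n\ge 0$ is a partial $\L_{<,\sigma}$-isomorphism. Note that a substructure $A$ is exactly a linear order closed under $\sigma$ (but not necessarily under $\sigma^{-1}$), so $A$ is a union of strictly increasing forward orbits, and the quantifier-free type of $a$ over $A$ is determined by the cuts that the points $\sigma^n(a)$ ($n\ge 0$) fill in $A$, together with any equalities $\sigma^n(a)=c$.

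The heart of the argument is realizing this type in $N$. Since $N$ is saturated, it is enough to check that $\qftp^M(a/A)$ is finitely satisfiable in $N$ over $A$. A finite fragment asserts finitely many order conditions $\sigma^{n_i}(x)\bowtie c_i$ with $c_i\in A$ and $\bowtie\,\in\{<,=,>\}$; applying the (model-internal) automorphism $\sigma^{-n_i}$ of $N$, these become conditions $x\bowtie\sigma^{-n_i}(c_i)$ placing $b$ in an intersection of rays and points of $N$. Such a $b$ exists precisely when these conditions are mutually consistent, i.e.\ when the relative order of the parameters $\sigma^{-n_i}(c_i)$ permits it. The key point --- and the place where both the increasing hypothesis and the $\sigma$-closedness of $A$ are used --- is that this relative order is \emph{absolute}: for $c,d\in A$ and $n\ge m$ one has, after applying the order-preserving map $\sigma^n$, that $\sigma^{-n}(c)\bowtie\sigma^{-m}(d)$ is equivalent to $c\bowtie\sigma^{\,n-m}(d)$, and $\sigma^{\,n-m}(d)\in A$ because $A$ is $\sigma$-closed. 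Thus every comparison among the backward translates reduces to a comparison of two elements of $A$, which has the same truth value in $M$ and in $N$. Consequently the finite system, being solvable in $M$ by $a$, is solvable in $N$ by density and the absence of endpoints; saturation then produces a single $b$ realizing the whole type. I expect this realization step to be the main obstacle, and it is exactly here that $x<\sigma(x)$ matters: it forces every orbit to be strictly monotone (no fixed or periodic points), so backward orbits are strictly decreasing and the cuts are cleanly ordered --- this is what blocks the Kikyo--Shelah obstruction of Theorem~2.6.

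With QE in hand the remaining claims are routine. Model completeness is immediate from QE. To see that $\DLO^+_\sigma$ is the model completion of $\LO^+_\sigma$, I will check that every $(L,<,\sigma)\models\LO^+_\sigma$ embeds into a model of $\DLO^+_\sigma$ --- densify $L$ by shuffling a copy of $\Q$ into each gap and extend $\sigma$ equivariantly, which preserves $x<\sigma(x)$ --- and that $\LO^+_\sigma$ has the amalgamation property over common substructures, which follows from the standard amalgamation of linear orders carried out $\sigma$-equivariantly; together with QE this yields the model completion. Completeness is then essentially free: the language $\L_{<,\sigma}$ has no constant symbols, hence no closed terms and no atomic sentences, so QE collapses every sentence to a logically determined Boolean combination; since $\DLO^+_\sigma$ is consistent (e.g.\ $(\Q,<,x\mapsto x+1)$ is a model), it is complete.

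Finally, o-minimality follows from QE together with the monotonicity of $\sigma$. A definable subset of a model in one variable is defined by a quantifier-free formula, so it suffices to analyze a single atomic formula in $x$ with parameters. A comparison $\sigma^n(x)\bowtie\sigma^m(x)$ of two terms in $x$ is decided outright by $n$ versus $m$ (using $x<\sigma(x)$), while a comparison $\sigma^n(x)\bowtie c$ becomes $x\bowtie\sigma^{-n}(c)$ and so defines a ray or a point. Boolean combinations of rays and points are finite unions of points and open intervals, giving o-minimality.
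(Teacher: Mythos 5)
Your proposal is correct and follows exactly the route the paper intends: the paper omits the details, saying only that the argument is the DLO quantifier-elimination proof ``with appropriate modifications to incorporate the automorphism,'' and your proof supplies precisely those modifications --- realizing quantifier-free $1$-types over a $\sigma$-closed substructure $A$ in a saturated model, with the key observation that comparisons among backward translates $\sigma^{-n}(c)$, $c\in A$, reduce (by applying $\sigma^{n}$) to comparisons inside $A$ and are therefore absolute between models. The derivations of model completeness, the model completion property, completeness, and o-minimality from QE are all standard and correctly executed.
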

The proof of this theorem follows a very similar argument as the proof of the corresponding theorem for DLO (without the automorphism) with appropriate modifications to incorporate the automorphism. We leave the details to the reader.

We have thus successfully shown that the Kikyo-Shelah result can be salvaged by putting extra restrictions on the automorphism -- the case of increasing (also, decreasing) automorphism is only one such example. Questions now arise, how many other examples can there be, and can one characterize all such model complete theories. We deal with these questions in the remaining part of this section. We first prove in Theorem 2.6 a general fact about model complete $\L_{<,\sigma}$-theories extending $\LO_\sigma$, which is the main obstruction underlying the Kikyo-Shelah theorem. Then we use this theorem to give a complete characterization of all complete and model complete extensions of the theory of dense linear orders with an automorphism. In particular, we show that there are only countably many such model complete extensions. We start with a few definitions.

Fix an $\L_{<,\sigma}$-structure $\M$.
\begin{defn}
A {\em cut} $C = (A, B)$ is a partition of $M = A\cup B$ into disjoint non-empty sets $A$ and $B$, where $A$ is an initial segment of $\M$ (i.e., $A < B$). 

A cut $C = (A, B)$ is called {\em $\sigma$-closed} if both $A$ and $B$ are closed under $\sigma$.

For a given $\L_{<, \sigma}$-formula $\varphi(x)$, a cut $C = (A, B)$ is called {\em $\varphi$-split} if there are $a^*\in A$ and $b^*\in B$ such that $\M\models\varphi(a)$ for all $a\in A$ with $a\ge a^*$ and $\M\models\neg\varphi(b)$ for all $b\in B$ with $b\le b^*$.

A cut $C = (A, B)$ is called {\em split} if $C$ is $\varphi$-split for some $\L$-formula $\varphi(x)$.
\end{defn}

\begin{defn}
Let $x, y\in M$. We say $x$ and $y$ are in the same $\sigma$-{\em archimedian class} if there are $m, n\in\Z$ such that $\sigma^m(x) \le y \le \sigma^n(x)$.
\end{defn}

\begin{rmk}
Note that a $\sigma$-archimedian class is closed under $\sigma$ and $\sigma^{-1}$. Moreover, if $C = (A, B)$ is a $\sigma$-closed cut, then for any $\sigma$-archimedian class $D$, either $D\subseteq A$ or $D\subseteq B$. Finally, if $C = (A, B)$ is a $\sigma$-closed cut, and $\bar{a}$, $\bar{a}'$ from $A$ and $\bar{b}$, $\bar{b}'$ from $B$ are such that $\qftp(\bar{a}) = \qftp(\bar{a}')$ and $\qftp(\bar{b}) = \qftp(\bar{b}')$, then $\qftp(\bar{a}\bar{b}) = \qftp(\bar{a}'\bar{b'})$ [``$\qftp$'' denotes the quantifier-free type].
\end{rmk}

The following theorem states a main obstruction to model completeness.

\begin{thm}
Let $T$ be an $\L_{<,\sigma}$-theory extending $\LO_\sigma$. If $T$ is model complete, then for every $\L_{<,\sigma}$-formula $\varphi(x)$, there is a number $k$ such that
$$T\models \mbox{``There are at most $k$ $\varphi$-split $\sigma$-closed cuts.''}$$
\end{thm}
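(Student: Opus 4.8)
My plan is to prove the contrapositive: assuming $T$ is model complete, I will produce, for each $\L_{<,\sigma}$-formula $\varphi(x)$, a finite bound on the number of $\varphi$-split $\sigma$-closed cuts that can occur in a model. The first step is to exploit model completeness in its syntactic form: every formula is equivalent modulo $T$ to an existential one, so I may fix quantifier-free $\theta,\chi$ with $\varphi(x)\leftrightarrow\exists\bar y\,\theta(x,\bar y)$ and $\neg\varphi(x)\leftrightarrow\exists\bar y\,\chi(x,\bar y)$. Since the language is $\{<,\sigma\}$, each of $\theta,\chi$ is a Boolean combination of order-comparisons and equalities between finitely many $\sigma$-iterates of its variables; I record the number $m$ of existential witnesses and a bound $p$ on the $\sigma$-depth occurring in $\theta$ and $\chi$. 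The intended bound $k$ will depend only on $m$ and $p$.

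Next I set up the counting. Suppose toward a contradiction that no such $k$ works; by compactness there is a saturated model $\M\models T$ carrying an infinite family of $\varphi$-split $\sigma$-closed cuts. Using the Remark --- that $\sigma$-archimedian classes are convex and $\sigma$-closed, so that the $\sigma$-closed cuts are exactly the cuts of the linearly ordered set of archimedian classes --- I attach to each cut $C_t=(A_t,B_t)$ a point $a_t\in A_t$ with $\varphi(a_t)$ and a point $b_t\in B_t$ with $\neg\varphi(b_t)$ lying in distinct archimedian classes straddling $C_t$, together with witness tuples $\bar d^{\,t}$ for $\theta(a_t,\bar y)$ and $\bar e^{\,t}$ for $\chi(b_t,\bar y)$. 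A Ramsey/indiscernibility argument then lets me thin the family so that the quantifier-free type of $(a_t,b_t,\bar d^{\,t},\bar e^{\,t})$, together with the pattern recording which coordinates share an archimedian class, is independent of $t$.

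The crux is a locality statement: because $\theta$ and $\chi$ are quantifier-free of bounded $\sigma$-depth, whether $\theta(a_t,\bar d^{\,t})$ (resp. $\chi(b_t,\bar e^{\,t})$) holds depends only on the relative order of the finitely many $\sigma$-iterates involved, hence only on the archimedian classes they occupy and the side of $C_t$ on which they fall, and never on the absolute location of the cut. Granting this, I choose two cuts $C_s<C_t$ from the thinned family that are far apart in the archimedian ordering, together with a $\sigma$-closed cut $C$ lying strictly between all the witness classes of $C_s$ and of $C_t$. Invoking the gluing clause of the Remark (quantifier-free types glue across a $\sigma$-closed cut), I assemble a single element $c$ sitting at $C$ that simultaneously carries the $\varphi$-witnessing left configuration transported from $C_s$ and the $\neg\varphi$-witnessing right configuration transported from $C_t$; this yields both $\exists\bar y\,\theta(c,\bar y)$ and $\exists\bar y\,\chi(c,\bar y)$, i.e. $\varphi(c)\wedge\neg\varphi(c)$, a contradiction. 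Hence the family of split cuts is finite, with cardinality bounded in terms of $m$ and $p$.

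I expect the locality statement to be the main obstacle: I must choose the existential witnesses $\bar d^{\,t},\bar e^{\,t}$ so that their archimedian classes are anchored adjacent to the cut rather than drifting off to distant classes, so that the transport and gluing across the $\sigma$-closed cut $C$ are legitimate and genuinely combine a $\varphi$-side with a $\neg\varphi$-side at one point. This is precisely where $\sigma$-closedness is indispensable, and where the mechanism behind the Kikyo--Shelah non-existence result is isolated: a single existential formula, having only finitely many witnesses reaching finitely many archimedian classes, cannot separately account for unboundedly many split cuts.
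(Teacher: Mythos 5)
Your use of model completeness to write $\varphi$ and $\neg\varphi$ as existentials, and the Ramsey thinning of the family of cuts, are both fine; the proof breaks at the transport/gluing step, and that step is not a technical obstacle you can anchor away --- it is invalid as a principle. Remark 2.5 is a statement about \emph{recombining tuples that already exist in the model}: if $\bar a,\bar a'$ lie below a $\sigma$-closed cut and $\bar b,\bar b'$ above it, with $\qftp(\bar a)=\qftp(\bar a')$ and $\qftp(\bar b)=\qftp(\bar b')$, then $\qftp(\bar a\bar b)=\qftp(\bar a'\bar b')$. It never produces new elements. What you need is a realization claim: an element $c$ and tuples $\bar d,\bar e$ in $\M$ with $\qftp(c,\bar d)=\qftp(a_s,\bar d^{\,s})$ and $\qftp(c,\bar e)=\qftp(b_t,\bar e^{\,t})$. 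Saturation cannot supply this either, because any such $c$ would satisfy the quantifier-free formulas $\theta(c,\bar d)$ and $\chi(c,\bar e)$, hence $\varphi(c)\wedge\neg\varphi(c)$; the two transported types over $c$ are jointly inconsistent, so the existence claim you invoke is exactly as strong as the contradiction you are trying to reach, and nothing in your outline derives it from the hypothesis of unboundedly many split cuts. Indeed your underlying ``locality'' principle --- that whether $\exists\bar y\,\theta(x,\bar y)$ holds at $x$ depends only on the archimedian-class pattern and the side of a $\sigma$-closed cut --- is simply false: witness configurations exist in some regions of a model and not in others, and that is precisely what $\varphi$-splitting at a cut means. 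A concrete counterexample is any model of the model complete theory from Theorem 2.11 with alternation pattern $(I,D)$: it has exactly one $\sigma$-closed cut split by $x<\sigma(x)$, and transporting a witness configuration across that cut would already be contradictory. Relatedly, the anchoring problem you flag at the end is real and unfixable: the witnesses $\bar d^{\,t},\bar e^{\,t}$ are whatever the model provides, and you have no means of forcing them to lie near the cut or on a prescribed side of $C$.

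Compare this with how the paper's proof of Theorem 2.6 arranges matters so that \emph{only existing tuples are ever recombined}. Instead of existentially defining $\varphi$, it observes that $\M\models\theta(a_i^*,b_i^*)$ for the parametrized universal formula $\theta(u,v)$ asserting that on $(u,v)$ every $\varphi$-point precedes every $\neg\varphi$-point, and applies model completeness in the diagram form $T\cup\Delta_\M\vdash\theta(a_i^*,b_i^*)$; compactness then extracts a finite quantifier-free $\delta(\bar a_i,\bar b_i)$, whose parameters automatically split into a part $\bar a_i$ below $C_i$ and a part $\bar b_i$ above $C_i$ (wherever in $M$ they happen to lie), with $T+\delta(\bar a_i,\bar b_i)\vdash\theta(a_i^*,b_i^*)$. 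After a pigeonhole making $\delta$ and $\qftp(\bar a_i\bar b_i)$ constant, Remark 2.5 is used only to swap the existing tuple $\bar b_i$ for the existing tuple $\bar b_{i+1}$ (both lie above $C_i$ and have the same quantifier-free type), giving $\M\models\delta(\bar a_i,\bar b_{i+1})$ and hence $\M\models\theta(a_i^*,b_{i+1}^*)$; this is then refuted inside $\M$ by a $\neg\varphi$-point just above $C_i$ sitting below a $\varphi$-point just below $C_{i+1}$. To salvage your outline you would have to replace the transport step by a device of this kind, in which every tuple mentioned is already present in $\M$ and model completeness is used to transfer an implication, not to relocate witnesses.
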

\begin{proof}
Fix a formula $\varphi(x)$, and assume by way of contradiction that there is no such $k$. Then, by compactness, there is a model $\M\models T$ with $|T|^+$ $\varphi$-split $\sigma$-closed cuts, say $\langle C_i = (A_i, B_i)\;:\;i\in I\rangle$ with $|I| = |T|^+$. We may assume that $C_i << C_j$ (i.e., $A_i$ is a proper subset of $A_j$) whenever $i < j$. For each $i\in I$, choose $a_i^*\in A_i$ and $b_i^*\in B_i$ witnessing $\varphi$-splitting. Then $\M\models\theta(a_i^*, b_i^*)$, where
$$\theta(u, v) := \forall x, y\in(u, v)\Big[(\varphi(x)\wedge\neg\varphi(y))\rightarrow x < y\Big].$$
As $T$ is model complete,
$$T\cup\Delta_\M\vdash\theta(a_i^*, b_i^*)$$
for each $i$. By compactness, for each $i$, choose $\bar{a}_i$ from $A_i$ and $\bar{b}_i$ from $B_i$ and a quantifier-free formula $\delta_i(\bar{x}, \bar{y})$ such that $T+\delta_i(\bar{a}_i, \bar{b}_i)\vdash\theta(a_i^*, b_i^*)$. Without loss of generality, we may assume that $a_i^*$ and $b_i^*$ are the first elements of $\bar{a}_i$ and $\bar{b}_i$, respectively. By the pigeon-hole principle, we may assume that $\delta_i$ is constantly $\delta$ and that the quantifier-free type of $\bar{a}_i\bar{b}_i$ is constant for all $i$. But then, it follows from Remark 2.5 that $\bar{a}_i\bar{b}_{i+1}$ has the same quantifier-free type as $\bar{a}_i\bar{b}_i$. Hence, $\M\models\delta(\bar{a}_i, \bar{b}_{i+1})$, which implies that $\M\models\theta(a_i^*, b_{i+1}^*)$, which is a contradiction.
\end{proof}

We use this theorem to give a bound on the number of model complete theories extending $DLO_\sigma$ in the language $\L_{<, \sigma}$.

\begin{cor}
For any model complete $\L_{<, \sigma}$-theory $T$ extending $DLO_\sigma$, there are only finitely many split $\sigma$-closed cuts.
\end{cor}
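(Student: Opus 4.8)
The plan is to derive the Corollary from Theorem 2.6. Theorem 2.6 already bounds, for each individual formula $\varphi$, the number of $\varphi$-split $\sigma$-closed cuts by a finite $k_\varphi$; the only gap is that the definition of a split cut ranges over all formulas, and a priori the bounds $k_\varphi$ could be nonzero for infinitely many $\varphi$, yielding only a countable total. So the entire content of the Corollary is to show that, in a model complete $T$, the splitting of $\sigma$-closed cuts is governed by only finitely many formulas. I would argue by contradiction: assume some $\M\models T$ carries infinitely many split $\sigma$-closed cuts $C_1\ll C_2\ll\cdots$, and aim to produce a single formula splitting infinitely many of them, contradicting Theorem 2.6.

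First I would record what witnesses are available. Each $C_n$ is $\varphi_n$-split for some parameter-free $\L_{<,\sigma}$-formula $\varphi_n$, and by model completeness I may take each $\varphi_n$ existential. I would then classify how a parameter-free formula can change truth value monotonically across a $\sigma$-closed cut. Remark 2.5 is the main tool: across a $\sigma$-closed cut quantifier-free types combine independently, so the germ of a quantifier-free formula at the cut is determined by a single atomic comparison $\sigma^i(x)\bowtie\sigma^j(x)$, which, after normalizing by the $\sigma$-invariance of the cut, becomes one of $x\bowtie\sigma^k(x)$. Because $\sigma$ is order preserving, the $\sigma^k$-orbit of a point near the cut is monotone, so the sign of $\sigma^k(x)-x$ cannot flip across $C_n$ unless the sign of $\sigma(x)-x$ already flips, or a fixed point is crossed, in the immediate vicinity. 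This should confine the quantifier-free contribution to the two basic formulas $x<\sigma(x)$ and $x=\sigma(x)$.

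The genuinely existential witnesses are the main obstacle, and they are unavoidable: a cut sitting at the edge of a block of $\sigma$-fixed points is split by a formula such as $\exists y\,(y\le x\wedge\sigma(y)=y)$ but by no quantifier-free formula. I would handle these by showing that the relevant existential formulas locate $x$ only relative to $\emptyset$-definable convex features of $\M$ — the $\sigma$-fixed set and the regions on which $\sigma(x)-x$ keeps a constant sign — and that Theorem 2.6, applied to the finitely many basic formulas above, already forces these features to have only finitely many endpoints. Concretely, I expect to prove that the parameter-free definable subsets of $M$ in one variable form, up to germs at $\sigma$-closed cuts, a finite Boolean algebra of convex sets; consequently only finitely many formulas can split a $\sigma$-closed cut. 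Pigeonholing the infinitely many $C_n$ into this finite family of formulas then produces a single formula that splits infinitely many of them, contradicting Theorem 2.6.

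The delicate step is thus the passage from Theorem 2.6's per-formula finiteness to a bound on the number of formulas that matter; this is exactly where the Kikyo--Shelah obstruction, quantified in Theorem 2.6, gets converted into the hard finiteness asserted here, and it is where I would spend the most care — in particular in verifying that higher powers of $\sigma$ and nested quantifiers cannot manufacture new split cuts beyond those detected by the $\sigma$-direction trichotomy and the boundary of the fixed-point set.
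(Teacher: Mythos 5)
Your overall strategy is the paper's: reduce splitting by an arbitrary $\L_{<,\sigma}$-formula to splitting by a fixed finite list of formulas, then invoke Theorem 2.6 once per formula on that list (the paper applies Theorem 2.6 to $x<\sigma(x)$, $x=\sigma(x)$, $\sigma(x)<x$ and adds the three bounds $k_1+k_2+k_3$; your pigeonhole argument is the same move in contrapositive form). Your treatment of quantifier-free formulas, reducing $\sigma^m(x)<\sigma^n(x)$ to the three basic formulas via monotonicity of the $\sigma$-orbit, is exactly the paper's reduction.

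The gap is the step you yourself flag as delicate and then do not carry out: formulas with quantifiers. What you say you ``expect to prove'' (that the parameter-free one-variable definable sets form, up to germs at $\sigma$-closed cuts, a finite Boolean algebra of convex sets) is essentially the corollary itself, and the route you sketch toward it fails as stated. You propose to control quantified formulas by the $\emptyset$-definable ``convex features'' (the fixed-point set and the sign regions of $\sigma(x)-x$) and to bound their endpoints by Theorem 2.6 applied to the basic formulas. But Theorem 2.6 applied to a basic formula only bounds the cuts split by \emph{that} formula, and the boundary cuts of your features need not be split by any basic formula: this happens precisely when blocks of different direction types accumulate at a cut. For instance, if $A$ is a single increasing block and $B$ is a descending $\omega$-sequence of blocks alternating between fixed and increasing type, then the $\sigma$-closed cut $(A,B)$ is split by $\forall y\,(y\le x\to\sigma(y)\neq y)$ but by no quantifier-free formula, since every initial segment of $B$ contains points of both types; and if fixed and increasing blocks alternate \emph{densely}, the fixed set has infinitely many convex components while there are no basic-split cuts at all, so Theorem 2.6 on basic formulas gives no bound whatsoever on your features. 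Ruling out such configurations in models of a model complete theory is exactly the missing content, and nothing in your sketch does it. (Your motivating example is also incorrect: a cut sitting at the edge of a block of $\sigma$-fixed points \emph{is} split by a quantifier-free formula, namely by $\sigma(x)=x$ or by the basic formula of the block on the other side of the cut; genuinely non-quantifier-free splitting requires the accumulation phenomenon above.) For comparison, the paper disposes of quantified formulas in one line, asserting that renaming $\sigma^n(x)$ as $y_n$ and using quantifier elimination for $\DLO$ reduces every formula to the three basic ones; your instinct that this step deserves more care than that is sound, but identifying the obstacle is not the same as overcoming it, and as written your proposal does not close it.
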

\begin{proof}
The only interesting formulas in one-variable to be considered are $x < \sigma(x)$, $x = \sigma(x)$ and $\sigma(x) < x$. This can be achieved by the usual trick of renaming $\sigma^n(x)$, the $n$-th iterate of $\sigma$, by a new variable $y_n$ and using quantifier elimination of DLO in the language $\L_{<}$ and finally translating back any $y_n$ occurring in the resulting quantifier-free formula to $\sigma^n(x)$. The general quantifier-free $\L_{<, \sigma}$-formulas in one-variable are of the form $\sigma^m(x) < \sigma^n(x)$, but this is equivalent to $x < \sigma(x)$ if $m < n$, and to $\sigma(x) < x$ if $n < m$. It thus follows that $\varphi$-splitting by any formula $\varphi(x)$ corresponds to splitting by one of these three basic formulas. If $x = \sigma(x)$, then $x$ is the only element in its $\sigma$-archimedian class. If $x < \sigma(x)$ and $y$ belongs to the same $\sigma$-archimedian class as $x$, then there is some $m\in\Z$ such that $\sigma^m(x) \le y < \sigma^{m+1}(x)$. Consequently, we have $\sigma^{m+1}(x)\le\sigma(y) < \sigma^{m+2}(x)$, i.e., $y < \sigma(y)$. Similarly for the case $\sigma(x) < x$. In other words, these three formulas are preserved in a $\sigma$-archimedian class.

Thus, the only place where such a formula $\varphi(x)$ can possibly break is at a $\sigma$-closed cut. Let $T$ be a model complete $\L_{<, \sigma}$-theory extending $DLO_\sigma$. By Theorem 2.6, there is a number $k$ such that in every model $\M$ of the theory $T$, there are at most $k$ $\varphi$-split $\sigma$-closed cuts. Let $k_1$, $k_2$ and $k_3$ be the numbers associated with the formulas $\sigma(x) < x$, $\sigma(x) = x$ and $x < \sigma(x)$ respectively. Then there are at most $k_1 + k_2 + k_3$ split $\sigma$-closed cuts in $\M$. Thus, in any model complete $\L_{<, \sigma}$-theory $T$ extending $DLO_\sigma$, there are only finitely many split $\sigma$-closed cuts.
\end{proof}

We now use Theorem 2.6 and Corollary 2.7 to give a complete characterization of all model complete $\L_{<,\sigma}$-theories extending $DLO_\sigma$.

\begin{defn}
A model $\M$ has {\em length $n$} if it has exactly $n$ split $\sigma$-closed cuts.
\end{defn}

\begin{rmk}
If $\M$ has $n$ split $\sigma$-closed cuts, it is divided into $n+1$ blocks, where on each block one of three things happen: 
\begin{itemize}
\item[(C)] $\sigma(x) = x$ for all $x$, or
\item[(\,I\,)] $x < \sigma(x)$ for all $x$, or
\item[(D)] $\sigma(x) < x$ for all $x$. 
\end{itemize}
There are also 2 different ways in which blocks of type $C$ can occur, namely,
\begin{itemize}
\item[(C$^1$)] The block is a singleton;
\item[(C$^\emptyset$)] The block is a model of $DLO$ with no endpoints.
\end{itemize}
For a model $\M$ of length $n$, we then introduce the following notation:
\begin{eqnarray*}
C(M) & := & \{0\le k\le n\;|\; k^{th} \mbox{ block is of type C}\}\\
I(M) & := & \{0\le k\le n\;|\; k^{th} \mbox{ block is of type I}\}\\
D(M) & := & \{0\le k\le n\;|\; k^{th} \mbox{ block is of type D}\}
\end{eqnarray*}
\end{rmk}

We now show that the statements ``there are $n$ blocks'' and ``$x$ is in the $k^{th}$ block" are first-order.
\begin{lem}
The statements ``$x$ is in the $k^{th}$ block'' and ``there are $n$ blocks'' are first-order statements.
\end{lem}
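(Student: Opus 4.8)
The plan is to express both statements through the quantifier-free \emph{$\sigma$-type} of an element together with a single extra quantifier, so that counting blocks never requires quantifying over cuts (which are second-order objects). For $z\in M$ let $\tau_C(z)$, $\tau_I(z)$, $\tau_D(z)$ abbreviate the quantifier-free formulas $\sigma(z)=z$, $z<\sigma(z)$, $\sigma(z)<z$; exactly one holds of each $z$, and by the computation in the proof of Corollary 2.7 each is constant on every $\sigma$-archimedian class. I would first record the formula
$$\mathrm{SB}(x,y) := \forall z\,\Big[\big((x\le z\le y)\vee(y\le z\le x)\big)\rightarrow \big((\tau_C(x)\leftrightarrow\tau_C(z))\wedge(\tau_I(x)\leftrightarrow\tau_I(z))\big)\Big],$$
which asserts that the $\sigma$-type is constant on the closed interval spanned by $x$ and $y$ (fixing two of the three indicators fixes the third, so this really pins down the type).

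The key step, and the main obstacle, is to prove the equivalence: $\M\models\mathrm{SB}(x,y)$ if and only if $x$ and $y$ lie in the same block. For the easy direction, if $x$ and $y$ are in the same block then the interval between them sits inside that block, on which the $\sigma$-type is constant by Remark 2.9, so $\mathrm{SB}(x,y)$ holds. For the converse I would argue contrapositively: suppose $x<y$ lie in distinct blocks $P<Q$. By Corollary 2.7 there are only finitely many blocks, they are convex and linearly ordered, and—since every split reduces to a split by one of the three basic formulas—the $\sigma$-type differs across each split $\sigma$-closed cut separating consecutive blocks. Taking the block immediately to the right of $P$, I can produce a point $z$ with $x<z\le y$ whose $\sigma$-type differs from that of $x$, so $\mathrm{SB}(x,y)$ fails. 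The subtle point hidden here, which must be checked, is the matching claim that a change of $\sigma$-type can \emph{only} occur between $\sigma$-archimedian classes, hence at a $\sigma$-closed cut, which is then split by the corresponding basic formula; this is what guarantees that distinct $\sigma$-types are separated by genuine split $\sigma$-closed cuts rather than by spurious boundaries, so that $\mathrm{SB}$ tracks exactly the block structure.

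Granting the equivalence, the counting is routine and visibly first-order for each fixed numeral. I would set, for $m\ge 1$,
$$\beta_{\ge m} := \exists x_1\cdots\exists x_m\Big[\bigwedge_{i=1}^{m-1}(x_i<x_{i+1})\wedge\bigwedge_{i=1}^{m-1}\neg\mathrm{SB}(x_i,x_{i+1})\Big],$$
expressing ``there are at least $m$ blocks'': convexity forces the chosen representatives into strictly increasing, hence distinct, blocks, and conversely one representative per block witnesses $\beta_{\ge m}$. Then ``there are exactly $n$ blocks'' is $\beta_{\ge n}\wedge\neg\beta_{\ge n+1}$. Likewise ``$x$ has at least $k$ blocks strictly below it'' is
$$\gamma_{\ge k}(x) := \exists y_1\cdots\exists y_k\Big[\bigwedge_{i=1}^{k-1}(y_i<y_{i+1})\wedge y_k<x\wedge\bigwedge_{i=1}^{k-1}\neg\mathrm{SB}(y_i,y_{i+1})\wedge\neg\mathrm{SB}(y_k,x)\Big],$$
and ``$x$ is in the $k$-th block'' is $\gamma_{\ge k}(x)\wedge\neg\gamma_{\ge k+1}(x)$ (with $\gamma_{\ge 0}$ taken to be $\top$). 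Each of these is a bona fide first-order formula once the numeral $n$ or $k$ is fixed, which is precisely the assertion of the lemma; the finiteness of the number of blocks from Corollary 2.7 is what ensures these formulas partition the models as intended.
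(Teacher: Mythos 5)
Your proposal is correct and follows essentially the same route as the paper: your relation $\mathrm{SB}(x,y)$ is exactly the paper's relation $R$ (whose reflexive--symmetric closure $\approx$ the paper uses to define blocks as maximal convex sets of constant basic $\sigma$-type), and both arguments then count blocks by quantifying over tuples of pairwise inequivalent representatives below $x$. If anything, your counting formulas $\gamma_{\ge k}$, which explicitly require $\neg\mathrm{SB}(y_k,x)$, sidestep an off-by-one ambiguity in the paper's formula when $x$ is the minimum of its block (e.g.\ a singleton $C^1$ block), and your justification that $\mathrm{SB}$-classes coincide with the blocks of Remark 2.9 makes explicit what the paper leaves implicit.
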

\begin{proof}
Define an elementary equivalence relation $\approx$ on pairs $\{a, b\}$ from $\M$ as the reflexive and symmetric closure of the binary relation $R$, where 
\begin{eqnarray*}
R(a, b) \iff (a < b) & \bigwedge & \forall e\in(a, b)\;\Big[\Big(\sigma(a) = a\iff \sigma(e) = e\iff\sigma(b) = b\Big)\\
& & \bigvee\Big(a < \sigma(a)\iff e < \sigma(e)\iff b < \sigma(b)\Big)\\
& & \bigvee\Big(\sigma(a) < a\iff \sigma(e) < e\iff \sigma(b) < b\Big)\Big].
\end{eqnarray*}
The blocks are basically the $\approx$-classes, which are convex. Now
\begin{eqnarray*}
& & ``x \mbox{ is in the } k^{th} \mbox{ block''}\\
\iff & & \exists y_1 < y_2 < \ldots < y_k < x \bigwedge_{\begin{tabular}{c} $i, j = 1$\\$i\not= j$\end{tabular}}^k y_i\not\approx y_j\\
\bigwedge & & \forall y_1 < y_2 < \ldots < y_{k + 1} < x \bigvee_{\begin{tabular}{c} $i, j = 1$\\$i\not= j$\end{tabular}}^{k + 1} y_i\approx y_j.
\end{eqnarray*}
Also,
\begin{eqnarray*}
& & ``\mbox{there are $n$ blocks''}\\
\iff & & \forall x \bigvee_{k = 1}^n ``x \mbox{ is in the $k^{th}$ block''.} \qedhere
\end{eqnarray*}
\end{proof}

Given all the observations we made above, any model complete $\L_{<, \sigma}$-theory extending $DLO_\sigma$ must specify the following data: the number of $\approx$-classes, the alternation pattern among C, I and D, and for each C, one of the 2 choices.

\begin{thm}
Let $T$ be a first-order $\L_{<, \sigma}$-theory with the following axioms:
\begin{itemize}
\item $DLO\; + $ ``$\sigma$ is an $\L_<$-automorphism''
\item There are exactly $n$ $\approx$-classes for some fixed $n\in\omega$
\item The alternation pattern among C, I and D
\item For each block of type C, specify whether it is $C^1$ or $C^{\emptyset}$.
\end{itemize}
Then $T$ is complete, model complete, and weakly o-minimal. Moreover, these are all the complete and model complete $\L_{<, \sigma}$-theories extending $DLO_\sigma$.
\end{thm}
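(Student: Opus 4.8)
The plan is to reduce the whole structure to its finitely many convex $\approx$-blocks and then glue together what is already known about each block. First I would record the structural fact underlying everything: since consecutive split $\sigma$-closed cuts bound each block and no $\sigma$-closed cut splits a $\sigma$-archimedian class (Remark 2.5), every block is $\sigma$-invariant, so for $x$ in the $k^{th}$ block all iterates $\sigma^i(x)$ stay in the $k^{th}$ block. Consequently a block of type $C^1$ is a single $\sigma$-fixed point, a block of type $C^\emptyset$ is a model of $DLO$ with $\sigma=\id$, a block of type $I$ is a model of $\DLO^+_\sigma$, and a block of type $D$ is a model of $\DLO^-_\sigma$. Thus each block, viewed on its own, satisfies a theory that is complete, model complete and o-minimal, by Theorem 2.2 (and its decreasing analogue) for the $I$ and $D$ blocks and by the classical theory of $DLO$ for the $C$ blocks.

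For model completeness I would use Robinson's embedding test: given $\M\subseteq\mN$ with $\M,\mN\models T$, show $\M\preceq\mN$. Because the $C/I/D$ behavior of a point is quantifier-free and $<$ is preserved, the embedding preserves block type and order, and since $T$ fixes the number $n+1$ of blocks together with their alternation pattern, it must send the $k^{th}$ block of $\M$ into the $k^{th}$ block of $\mN$. I would then verify Tarski--Vaught: a witness $c\in\mN$ to $\exists y\,\varphi(y,\bar a)$ lies in some block, say the $k^{th}$, and as blocks are $\sigma$-invariant every term $\sigma^i(c)$ stays there; the relation of $c$ to each parameter in another block is forced by the block order, while its relation to the parameters lying in the $k^{th}$ block is a within-block condition. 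By the quantifier elimination of the block theory (Theorem 2.2 and $DLO$), the existence of such a $c$ in the $k^{th}$ block is a quantifier-free condition on the $k^{th}$-block parameters, which is preserved between $\M$ and $\mN$, so a witness already exists in the $k^{th}$ block of $\M$.

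Completeness I would then obtain cheaply from model completeness: the prescribed data is realized by a prime model $\M_0$ whose blocks are the prime models of the respective block theories (the $\Q$-model for $DLO$, a point for $C^1$, and the prime models of $\DLO^\pm_\sigma$), and $\M_0$ embeds into every $\M\models T$ block by block; by model completeness each such embedding is elementary, so all models are elementarily equivalent to $\M_0$. Weak o-minimality follows the same block-by-block pattern: a parameter-definable subset in one variable meets each block in a set definable in that block's theory, hence a finite union of convex sets by the o-minimality of $DLO$ and of $\DLO^\pm_\sigma$, and summing over the finitely many blocks keeps it a finite union of convex sets; one gets only weak, not full, o-minimality because a block boundary is a cut that need not be realized, so these pieces need not be intervals with endpoints in the model. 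For the ``moreover'' clause, let $T'$ be any complete and model complete $\L_{<,\sigma}$-theory extending $DLO_\sigma$; by Corollary 2.7 it has only finitely many split $\sigma$-closed cuts, and by Lemma 2.10 the statements fixing the number of blocks, the alternation pattern, and the $C^1$-versus-$C^\emptyset$ choice are first-order, hence decided by $T'$. This data specifies one of the theories $T$ in our list, every model of $T'$ satisfies its axioms, so $T'$ and $T$ share a model, and as both are complete they coincide.

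The main obstacle is the gluing step inside the model-completeness argument: one must show rigorously that satisfaction of an arbitrary $\L_{<,\sigma}$-formula splits into independent within-block conditions, so that the block-level quantifier elimination of Theorem 2.2 can be combined into global elementarity. The bookkeeping for parameters distributed across several blocks, together with the possibility that a block boundary is an unrealized cut, is exactly where the care is needed.
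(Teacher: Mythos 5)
Your proposal is correct and takes essentially the same route as the paper: decompose any extension $\M\subseteq\mN$ of models block by block, note that the axioms force the embedding to respect the block structure, and invoke model completeness and completeness of the block theories $\DLO$, $\DLO^+_\sigma$, $\DLO^-_\sigma$. In fact you supply more detail than the paper's own (very terse) proof — in particular the Tarski--Vaught gluing step, the minimal-model argument for completeness, the weak o-minimality check, and the derivation of the ``moreover'' clause from Corollary 2.7 and Lemma 2.10 are all left implicit there — and you correctly identify the block-wise decomposition of satisfaction as the one point requiring genuine care.
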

\begin{proof}
Let $\M\subseteq\mN\models T$. Since the number of $\approx$-classes, the alternation pattern among C, I and D, and also the specific type of C is specified, the only way $\mN$ extends $\M$ is by extending each block. But each block is a model of one of the model complete theories $\DLO$ (trivial or nontrivial), $\DLO_\sigma^+$ or $\DLO_\sigma^-$. Hence, $\M\preceq\mN$.

Moreover, since each of the above theories $\DLO$, $\DLO^+_\sigma$ and $\DLO^-_\sigma$ is a complete theory and there are only finitely many blocks, the theory $T$ is complete as well.
\end{proof}

As an immediate corollary, we get
\begin{cor}
There are exactly $\aleph_0$ complete and model complete $\L_{<, \sigma}$-theories extending $DLO_\sigma$.
\end{cor}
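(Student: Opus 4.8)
The plan is to read the count straight off the classification. By Theorem 2.11, a complete, model complete $\L_{<,\sigma}$-theory extending $DLO_\sigma$ is precisely the theory axiomatised by a finite packet of combinatorial data: a length $n\in\omega$, an alternation pattern for the $n+1$ blocks (a word of length $n+1$ over the alphabet $\{C,I,D\}$ in which adjacent letters differ, since two neighbouring blocks of the same one of the three types would merge into a single $\approx$-class), and, for each block of type $C$, a choice of $C^1$ or $C^{\emptyset}$. The key point for the counting is that the passage from such a packet to its theory is \emph{onto} the class we are trying to count — this is exactly the ``moreover, these are all'' clause of Theorem 2.11 — and that the class of packets is countably infinite.

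For the upper bound I would simply count packets. For each fixed $n$ there are at most $3^{n+1}$ alternation patterns and at most $2^{n+1}$ refinements of the $C$-blocks, hence only finitely many packets of length $n$. The collection of all packets is therefore a countable union $\bigcup_{n\in\omega}(\text{packets of length }n)$ of finite sets, so it is at most countable; since every complete, model complete theory extending $DLO_\sigma$ is the theory of some packet, there are at most $\aleph_0$ such theories.

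For the lower bound I would exhibit infinitely many that are pairwise distinct. For each $n$ let $T_n$ be the theory given by the packet of length $n$ whose blocks alternate $I,D,I,D,\dots$; by Theorem 2.11 each $T_n$ is complete and model complete. The one subtlety — and it is the only place the argument needs more than bookkeeping — is to certify that these are genuinely different theories rather than different axiomatisations of the same one. This is supplied by Lemma 2.10: the sentence ``there are exactly $n+1$ blocks'' is first-order, belongs to $T_n$, and fails in any model of $T_m$ with $m\neq n$, so $T_n\neq T_m$. Hence there are at least $\aleph_0$ such theories, and combining the two bounds gives exactly $\aleph_0$.
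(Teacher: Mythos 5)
Your proof is correct and is exactly the argument the paper intends: the corollary is stated as immediate from Theorem 2.11, and your write-up just spells out the two bounds — countably many finite data packets (so at most $\aleph_0$ theories, using the ``these are all'' clause), and infinitely many pairwise distinct ones, distinguished via the first-order block-count sentence of Lemma 2.10.
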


\begin{rmk}
Even though $\DLO$ is a model completion of $\LO$ and $\DLO^+_\sigma$ is a model completion of $\LO^+_\sigma$, it does not follow that every model complete extension of $\LO_\sigma$ is one of the theories listed in the theorem above. For example, consider the model complete theory $Th(\Z, <, \sigma)$, where $\sigma(n) = n+1$.
\end{rmk}

\section{Ordered Abelian Group with Increasing Automorphism}
Now we consider the theory $\OAG$ of ordered abelian groups in the language $\L_{OG} := \{+, -, 0, <\}$. As noted in the introduction, $\OAG$ has the strict order property and thus by Theorem 1.2, $\OAG_\sigma$ does not have a model companion in $\L_{OG, \sigma}$. So we need to put some restriction on the automorphism. A natural guess, inspired from the previous section and from the proof of the Kikyo-Shelah theorem, is to impose the restriction of an increasing automorphism. But in the context of groups, since $\sigma(-x) = -\sigma(x)$, we have $0 < x < \sigma(x)$ implies $\sigma(-x) < -x$. In particular, $\sigma$ cannot be increasing for all $x$. So we impose the following restriction:

\begin{defn}
An automorphism $\sigma$ of $\OAG$ is said to be \it{(positive) increasing} if
$$\forall x\;(x > 0\rightarrow x < \sigma(x)).$$
\end{defn}

We denote by $\ODAG$ the theory of ordered divisible abelian groups in the language $\L_{OG}$. We denote by $\ODAG^+_\sigma$ the theory of $\ODAG$ together with a (positive) increasing automorphism in the language $\L_{OG, \sigma}$. However, this restricted class of automorphisms does not work quite well because we will now show that $\ODAG^+_\sigma$ does not have a model companion in $\L_{OG, \sigma}$. The argument presented here is a variant of the proof of the Kikyo-Shelah theorem.

\begin{thm}
$\ODAG^+_\sigma$ does not have a model companion in $\L_{OG, \sigma}$.
\end{thm}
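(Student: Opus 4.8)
The plan is to derive a contradiction from the obstruction recorded in Theorem~2.6, exactly as the authors announce that this is ``a variant of the proof of the Kikyo--Shelah theorem.'' Suppose toward a contradiction that $\ODAG^+_\sigma$ has a model companion $T^*$; then $T^*$ is model complete. The first step is to isolate a single $\L_{OG,\sigma}$-formula whose splitting behaviour cannot be bounded. The natural candidate is $\varphi(x) := 0 < x \wedge \sigma(x) < x + x$, expressing that $\sigma$ enlarges the positive element $x$ by less than a factor of two. The key point is that, in contrast with the linear-order case, the restriction ``increasing on positives'' does not pin down the \emph{rate} of growth of $\sigma$: on a positive element $\sigma(x)$ may stay within the archimedean class of $x$ (so that $x < \sigma(x) < 2x$ and $\varphi$ holds) or it may be pushed into a strictly larger archimedean class (so that $\sigma(x) > nx$ for all $n$, whence $\neg\varphi$). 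These two regimes can coexist in one model, and their boundary is precisely where $\varphi$ splits.

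The second step is to manufacture, for each $N$, a model $G_N \models \ODAG^+_\sigma$ carrying at least $N$ mutually nested $\varphi$-split $\sigma$-closed cuts, and then to embed $G_N$ into a model of $T^*$ (possible because $T^*$, being a model companion, is cofinal under embeddings among models of the universal part of $\ODAG^+_\sigma$). I would build $G_N$ as a lexicographic (Hahn-type) sum of divisible ordered abelian groups indexed by a linear order partitioned into $N{+}1$ consecutive intervals, defining $\sigma$ to act with bounded growth (say $\sigma(x) = \tfrac{3}{2}x$ within each summand) on alternate intervals and with an archimedean jump (sending each summand strictly into a higher one, while remaining above the preceding cut) on the others. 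Arranging the jumps so that $\sigma$ never crosses a boundary keeps each cut $\sigma$-closed, while the alternation of regimes makes $\varphi$ change truth value at each of the $N$ boundaries. Since $N$ is arbitrary, compactness then supplies a single model of $T^*$ with $|T^*|^+$ many $\varphi$-split $\sigma$-closed cuts, contradicting the bound furnished by Theorem~2.6.

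The hard part is that Theorem~2.6 is stated and proved for $\L_{<,\sigma}$, and its proof uses Remark~2.5, whose third clause relies on quantifier-free types being determined by the order alone; in $\L_{OG,\sigma}$ this fails, because additive terms such as $\bar a + \bar b$ create relations straddling a cut, so the crucial swap of $\bar b_i$ for $\bar b_{i+1}$ need not preserve the quantifier-free type. I expect this additive interaction across the cut to be the main obstacle. The way I would overcome it is to re-run the proof of Theorem~2.6 directly in $\L_{OG,\sigma}$, replacing the appeal to Remark~2.5 by a group-theoretic lemma tailored to the construction of $G_N$: because the witnessing elements below and above a given boundary lie in complementary summands of the lexicographic sum, the subgroups they generate form an internal direct sum with no unexpected $\L_{OG}$-relations, so the quantifier-free type of $\bar a_i \bar b_{i+1}$ coincides with that of $\bar a_i \bar b_i$. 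With this lemma in hand, the pigeonhole argument, the extraction of the quantifier-free formula $\delta$ from model completeness, and the final replacement $\M \models \delta(\bar a_i, \bar b_{i+1})$ proceed verbatim as in Theorem~2.6, yielding the desired contradiction and hence showing that $\ODAG^+_\sigma$ has no model companion in $\L_{OG,\sigma}$.
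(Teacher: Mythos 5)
Your plan founders at exactly the point you flag as ``the hard part,'' and the repair you propose does not close the gap. The obstruction argument of Theorem~2.6 has to be run inside a model $\M$ of the hypothetical model companion $T^*$: it is model completeness of $T^*$ that hands you, by compactness, quantifier-free formulas $\delta(\bar a_i,\bar b_i)$ with $T^*+\delta(\bar a_i,\bar b_i)\vdash\theta(a_i^*,b_i^*)$, and it is these tuples, sitting in $\M$, that must be swapped across the cuts. You have no structural control over $\M$ or over the tuples: $\M$ is not one of your lexicographic sums $G_N$ (indeed $G_N$ is surely not existentially closed, hence not a model of $T^*$), so your lemma that ``the witnessing elements lie in complementary summands and generate an internal direct sum'' is a statement about the wrong structure. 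What you would actually need is the $\L_{OG,\sigma}$ analogue of Remark~2.5 for \emph{arbitrary} tuples separated by a $\sigma$-closed cut, and that statement is false: an entry of $\bar b_i$ may have the form $b+a$ with $a$ an entry of $\bar a_i$, so $\qftp(\bar a_i\bar b_i)$ records relations such as $y_2-y_1=x$ that straddle the cut and are not determined by $\qftp(\bar a_i)$ and $\qftp(\bar b_i)$ separately; after the pigeonhole swap $\bar b_i\mapsto\bar b_{i+1}$ such a relation can change truth value, and the deduction $\M\models\delta(\bar a_i,\bar b_{i+1})$ collapses. (This failure is precisely what forces the heavy machinery of Section~4.3 --- strong cuts, unpacked atomic formulas, and the hybrid witnesses $b^-c^+$ of Lemma~4.33 --- and even there the argument is carried out only in elementary extensions of $\omega$-sums, whose structure below and above a strong cut is understood, not in arbitrary models of an unknown $T^*$.) A secondary gap: your compactness assembly needs the $\varphi$-split $\sigma$-closed cuts of $G_N$ to survive the embedding into a model of $T^*$, but both $\sigma$-closedness and splitting are universal conditions on the cut, and new elements of the extension can violate them (in an existentially closed model one expects elements just below the old cut whose image under $\sigma$ jumps above it).

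The paper's actual proof takes a different route that avoids analyzing definability in models of $T^*$ altogether; it is a Kikyo--Shelah style type-and-compactness argument exploiting existential closedness. Fix rationals $1<r<q$, start from $(\Q,\,x\mapsto qx)\models\ODAG^+_\sigma$, embed it into a saturated $(N,\sigma)\models T^*$, and set $a_i=q^ia$, $p(x)=\{a_i<x : i<\omega\}$, $\psi(x)=\exists y\,(a_0<y<x\wedge\sigma(y)=ry)$. A growth-rate computation shows no finite subset of $p$ implies $\psi$; conversely, for any realization $c$ of $p$ one takes a countable elementary submodel $M$ containing $a_0,c$, realizes by some $d$ the cut lying above all $a_i$ and below everything above them, forms the divisible hull $M_1\oplus\Q d\oplus M_2$, and extends the automorphism by $\tau(d)=rd$ --- here the direct-sum control you wanted is legitimately available, because the extension is being \emph{constructed} rather than extracted from model completeness --- so existential closedness of $M$ (as a model of $T^*$) forces $\psi(c)$. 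The two claims together contradict saturation of $(N,\sigma)$. If you insist on your route, the honest version of it is what the paper does in Lemmas~4.40--4.41 for $\omega$-sums, and you should expect to need all of that apparatus rather than a verbatim rerun of Theorem~2.6.
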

\begin{proof}
Suppose $\ODAG^+_\sigma$ has a model companion, say $T_A$. Recall that $\ODAG$ is a complete and model complete theory in $\L_{OG}$. 

Fix any $1 < r < q\in\Q$. Fix any $0 < a\in\Q$. It is easy to see that $\tp_{\L_{OG}}(a/\emptyset) = \tp_{\L_{OG}}(qa/\emptyset)$. By induction, it follows that $\tp_{\L_{OG}}(q^ia/\emptyset) = \tp_{\L_{OG}}(q^ja/\emptyset)$ for any non-negative integers $0 \le i, j < \omega$. Let $I = \langle a_i : i < \omega\rangle$, where $a_i = q^ia$. 

Define $\tilde\sigma: \Q\to\Q$ as $\tilde\sigma(x) = qx$. It is easy to check that $\tilde\sigma$ is an $\L_{OG}$-automorphism of $\Q$. Moreover, it is increasing too. Thus, $(\Q, \tilde\sigma)\models \ODAG^+_\sigma$. So we can embed $(\Q, \tilde\sigma)$ into a model $(N, \sigma)$ of $T_A$. In particular, $\sigma(a_i) = a_{i+1} = q^ia$. Without loss of generality, we can assume $(N, \sigma)$ is sufficiently saturated.

Now consider,
\begin{eqnarray*}
p(x) & = & \{a_i < x : i < \omega\} \\
\psi(x) & = & \exists y\;(a_0 < y < x \;\wedge\; \sigma(y) = ry)
\end{eqnarray*}

\textbf{Claim:} In $(N, \sigma)$,
\begin{eqnarray*}
(1) & p(x)\vdash\psi(x) & \\
(2) & q(x)\not\vdash\psi(x) & \mbox{for any finite subset } q(x) \subset p(x).
\end{eqnarray*}

We show (2) first. Suppose $q(x)\subset\{a_i < x : i < L\}$ for some $L < \omega$. Then, $a_L\in q(N)$. By way of contradiction, suppose that $a_L\in\psi(N)$. Let $y_0$ witness $\psi(a_L)$. Thus, we have
$$a_0 < y_0 < a_L\;\wedge\;\sigma(y_0) = ry_0.$$
In particular, since $\sigma$ is an $\L_{OG}$-automorphism, we have that, for all $k < \omega$,  
$$\sigma^k(a_0) < \sigma^k(y_0)\iff q^ka_0 < r^k y_0\iff \Big(\dfrac{q}{r}\Big)^ka_0 < y_0,$$
which implies $a_L < y_0$, since $a_L = q^La_0 < \Big(\dfrac{q}{r}\Big)^{k_0}a_0$ for some sufficiently large $k_0 < \omega$, as $\dfrac{q}{r} > 1$. This leads to a contradiction.

Now we prove (1). Suppose $c\in p(N)$. Then, $c > a_i = q^ia_0$ for all $i < \omega$. Let $(M, \sigma|_M)$ be a countable $\L_{OG, \sigma}$-elementary substructure of $(N, \sigma)$ such that $a_0, c\in M$. Now consider the following type $q(x)$ over $M$:
\begin{eqnarray*}
q(x) & = & \{m < x : m\in M \mbox{ and } m < a_i \mbox{ for some } i < \omega\}\\
&& \bigcup\;  \{x < m : m\in M \mbox{ and } a_i < m \mbox{ for all } i < \omega\}\\
& =: & M_1 \cup M_2
\end{eqnarray*}
It is easy to see that any finite subset of $q(x)$ is realized in $M$ by some $a_i$. Hence, $q(x)$ is finitely consistent and thus consistent. Let $d\in N$ realize $q(x)$. Clearly $d\in N\setminus M$. Consider the divisible hull $G$ of the ordered abelian group generated by $M$ and $d$. As a group, $G$ can be identified with $M \oplus \Q d$; as an ordered group $G$ can be identified with $M_1\oplus \Q d\oplus M_2$ with the reverse lexicographic ordering (see Section 4.2 for more details). Thus, $G\models \ODAG$ and $M\subsetneq G\subsetneq N$. Since $\ODAG$ is model-complete, we in fact have that
$$M \preccurlyeq_{\L_{OG}} G\preccurlyeq_{\L_{OG}} N.$$

Define $\tau: G\to G$ as $\tau(m_1\oplus sd\oplus m_2) = \sigma(m_1) \oplus rsd\oplus \sigma(m_2)$ (for all $s\in\Q$). It is easy to see that $\tau$ is an automorphism of $G$, $\tau|_M = \sigma|_M$, and $\tau(d) = rd$. Moreover, since $\sigma$ is increasing and $r > 1$, it follows that $\tau$ is also an increasing automorphism. In particular, $(G, \tau)\models \ODAG^+_\sigma$. Also,
\begin{eqnarray*}
&&(G, \tau)\models a_0 < d < c \;\wedge\; \sigma(d) = rd\\
& i.e., &(G, \tau)\models \exists y\;[a_0 < y < c \;\wedge\; \sigma(y) = ry].
\end{eqnarray*}

Since $(M, \sigma|_M)$ is a model of $T_A$, it is an existentially closed model of $\ODAG^+_\sigma$. Moreover, $(M, \sigma|_M) \subseteq (G, \tau)$. Thus, the formula $(a_0 < y < c \;\wedge\; \sigma(y) = ry)$ has a solution in $(M, \sigma|_M)$. Hence, $(M, \sigma|_M)\models\psi(c )$, and therefore, $(N, \sigma)\models\psi( c)$.
\end{proof}

\section{Model Complete Theories of Ordered Abelian Groups with Automorphism}
Inspired from Section 2, we will now show that, if we restrict ourselves to very specific kinds of automorphisms, we do actually get model complete theories of ordered abelian groups with an automorphism. We will first deal with the case of multiplicative automorphisms, and then we will give two general constructions for building model complete theories from other model complete theories in the context of ordered abelian groups with an automorphism.

\subsection{Multiplicative Automorphism}
In this section, each of the intended automorphisms is multiplication by an element of a real-closed field. For example, $\sigma(x) = 2x$, or $\sigma(x) = \sqrt{2}x$, or $\sigma(x) = \delta x$, where $\delta$ is an infinite or infinitesimal element.

The problem is that in general abelian groups such multiplications do not make sense. But since integers embed in any torsion-free abelian group, in particular any ordered abelian group, by imitating what we do for real numbers, we can make sense of such multiplication.
\newline
For an abelian group $G$, multiplication by $m\in\N$ makes sense: $mg := \overbrace{g + \cdots + g}^{m\mbox{ times}}.$ 
\newline
Taking additive inverses, multiplication by integers also makes sense: $(-m)g := -(mg).$
\newline
If $G$ is torsion-free divisible, then multiplication by rational numbers makes sense: $\dfrac{m}{n}g = \dfrac{mg}{n}$ is defined to be the unique $y\in G$ such that $ny = mg.$

\begin{motiv}
We carry this idea forward and define cuts in rational numbers to make sense of multiplication by irrationals. Let $\rho$ be an element of a real closed field $K$. Then, for any $0 < g\in G$, we would like $\rho\cdot g$ to be an element of $G$ such that, for all $r\in\Q$,
$$r g \lessgtr \rho\cdot g \iff r \lessgtr \rho.$$
Since we are typically interested in preserving the order on $G$, we also require that $\rho > 0$, because then $$g_1 \lessgtr g_2 \iff \rho\cdot g_1 \lessgtr \rho\cdot g_2.$$
Without loss of generality, we also require that $\rho\ge 1$; otherwise, we can work with $\rho^{-1}$ instead. 
Since $\rho$ is an element of the real closed field $K$, we can define the cut of $\rho$ in the rationals by
$$cut_\Q(\rho) = \{a\in K \;:\; \mbox{for each } q\in\Q,\; q \lessgtr a \iff q \lessgtr \rho\}.$$
Clearly for all $a\in cut_\Q(\rho)$, $\rho\cdot g$ and $a\cdot g$ are order-indistinguishable with respect to the rationals. This is a little bit of a problem because we would typically like to be able to distinguish between $b\cdot g$ and $(b + \epsilon)\cdot g$, where $b$ is an algebraic number, and $\epsilon$ is an infinitesimal. This is because if $b$ is algebraic over $\Z$, then $b$ is a root of a polynomial $L(x) = \sum_{i = 0}^n a_i x^i$, with $a_i\in\Z$ for all $i = 0,\ldots, n$. Then for any $0\not=g \in G$, we have $L(b)\cdot g = 0$, but $L(b + \epsilon)\cdot g\not= 0$. However, for any $a\in K$ and any polynomial $L(x)$ over $\Z$, we also have $L(a)\in K$. In particular, $L(a) > 0$ or $L(a) = 0$ or $L(a) < 0$. So either $L(a)\cdot g > 0$ for all $0 < g\in G$, or $L(a)\cdot g = 0$ for all $g > 0$, or $L(a)\cdot g < 0$ for all $g > 0$. This is the property we take away from this particular setting and apply to the general setting to make the ``multiplication'' work and define what we call multiplicative ordered difference abelian group ($\MODAG$).
\end{motiv}
\vspace{2mm}

Coming back to the general situation, we have an ordered abelian group $G$ and an automorphism $\sigma: G\to G$. For $i\in\N$, we denote
\begin{center}
$\sigma^{i}(x) := \overbrace{\sigma(\sigma(\ldots(\sigma}^{i\mbox{ times}}(x))\ldots))$. 
\end{center}

\begin{defn}
There is a natural map $\Phi: \Z[\sigma]\to End(G)$, which maps any $L := m_k \sigma^{k} + m_{k - 1}\sigma^{k - 1} + \ldots + m_1\sigma + m_0$ (thought of as an element of $\Z[\sigma]$ with the $m_i$'s coming from $\Z$), to an endomorphism $L(\cdot): G\to G$. Such an $L$ is called a {\em linear difference operator}.
\end{defn}

Due to this action of $\Z[\sigma]$, $G$ has the structure of a $\Z[\sigma]$-module, with the understanding that $\sigma$ has an inverse. To turn it into an ordered $\Z[\sigma]$-module, we further impose the following condition on $\sigma$ (motivated from our earlier example with the real closed fields): for each $L\in\Z[\sigma]$,
$$\Big(\forall x > 0\; (L(x) > 0)\Big)\bigvee\Big(\forall x > 0\; (L(x) = 0)\Big)\bigvee\Big(\forall x> 0\; (L(x) < 0)\Big).$$
We call this condition Axiom OM (OM stands for {\it Ordered Module}). This axiom also makes sense for $\sigma$ an injective endomorphism.

Axiom OM is consistent with the axioms of $\OAG$ because any ordered abelian group is a model of this axiom with $\sigma(x) = 2x$ for all $x$, say. Also, with this axiom, $\Z[\sigma]$ becomes a quasi-ordered ring with the order defined as follows: 
\begin{eqnarray*}
L_1 \geqq L_2 & \iff & \forall x > 0\; \Big((L_1 - L_2)(x) \geqq 0\Big),\mbox{ and }\\
L_1 > L_2 & \iff & \forall x > 0\; \Big((L_1 - L_2)(x) > 0\Big).
\end{eqnarray*}

It is easy to see that the relation 
$$L_1 \approx L_2 \iff L_1 \geqq L_2 \mbox{ and } L_2\geqq L_1\iff \forall x > 0\; ((L_1 - L_2)(x) = 0)$$ 
is an equivalence relation. Thus taking a quotient makes sense, and we define
\begin{defn}
$\Z[\rho] := \Z[\sigma]/\approx$, where $\rho$ is the image of $\sigma$ under this quotient map.

We also define $\Q(\rho)$ to be the fraction field of $\Z[\rho]$.
\end{defn}

\begin{rmk}
Clearly then $\Z[\rho]$ is an (totally) ordered ring and admits an embedding into a real closed field. So $\rho$ can also be simultaneously thought of as an element of a real closed field.

It is also easy to see that $\Z[\rho] = \Z[\sigma]/Ker(\Phi)$, where $\Phi$ is as defined in Definition 4.1. Note that the kernel of $\Phi$ need not be trivial. For example, if $\sigma(x) = 2x$ for all $x$, then $\sigma - 2\in $ Ker$(\Phi)$.

Moreover $G$ is an ordered module over the ordered ring $\Z[\rho]$ with the understanding that $\rho$ has an inverse. So we can denote the automorphism on $G$ equivalently by $\rho\cdot$, i.e. $\sigma(x) = \rho\cdot x$. Axiom OM then is equivalent to: for each $L\in\Z[\rho]$,
$$\Big(\forall x > 0\; (L\cdot x > 0)\Big)\bigvee\Big(\forall x > 0\; (L\cdot x = 0)\Big)\bigvee\Big(\forall x> 0\; (L\cdot x < 0)\Big).$$
\end{rmk}

\begin{defn}
For any ordered difference abelian group $G$ satisfying Axiom OM, we define the {\em set of $\Z[\sigma]$-positivities of $G$} as
$$ptp_{\Z[\sigma]}(G) := \{L\in\Z[\sigma] : \forall x \in G\; (x>0\rightarrow L(x) > 0)\}.$$
We say that $G$ and $G'$ have the {\em same $\rho$} if $ptp_{\Z[\sigma]}(G) = ptp_{\Z[\sigma]}(G')$. We also say $G$ is a {\em $\MODAG$ with a given $\rho$} if $G$ satisfies a given consistent set of $\Z[\sigma]$-positivities.
\end{defn}

\begin{defn}
An ordered difference abelian group is called \textit{multiplicative} (in short, $\MODAG$) if it satisfies Axiom OM. The theory of such structures (also called $\MODAG$) is axiomatized by the axioms of an ordered difference abelian group together with Axiom OM. Note that this is an $\forall\exists$-theory.

We denote by $\MODAG_\rho$ the theory of the class of all $\MODAG$s with a same $\rho$.
\end{defn}

\begin{defn}
If there is a non-zero $L\in\Z[\sigma]$ such that $\forall x > 0 (L(x) = 0)$, we say $\rho$ satisfies $L$ and $\rho$ is {\em algebraic} (over the integers); otherwise $\rho$ is {\em transcendental}. If $\rho$ is algebraic, there is a minimal (degree) polynomial that $\rho$ satisfies.
\end{defn}

\begin{defn}
A $\MODAG$ $G$ is called \textit{divisible} (in short, $\MODDAG$) if for any $0\not\approx L \in\Z[\sigma]$ and $b\in G$, the equation $L(x) = b$ has a solution in $G$.
\end{defn}

\begin{defn}
The $\L_{OG, \sigma}$-theory of nontrivial multiplicative ordered divisible difference abelian groups (also denoted by $\MODDAG$) is axiomatized by the axioms of a $\MODAG$ along with 
$$\exists x\; (x\not=0)$$
and the following additional infinite list of axioms: for each $L\in\Z[\sigma]$,
$$\Big(\forall x\; (L(x) = 0) \Big) \vee \Big(\forall y\exists x\; (L(x) = y)\Big),$$
i.e., all non-zero linear difference operators are surjective (and because of Axiom OM, it follows in this case that $\forall y\exists!x (L(x) = y)$). Thus, $\MODDAG$ is an $\forall\exists$-theory. Similarly as above, we denote by $\MODDAG_\rho$ the theory of the class of all $\MODDAG$s with a same $\rho$.
\end{defn}

\begin{rmk} 
It might already be clear from the definitions above that, for a given $\rho$, $\MODDAG_\rho$ is basically the theory of nontrivial ordered vector spaces over the ordered field $\Q(\rho)$. Quantifier elimination then follows from well-known results \cite{LD}. \end{rmk}

\begin{thm}
$\MODDAG_\rho$ is complete and has quantifier elimination in $\L_{OG, \sigma}$. Also, $\MODDAG_\rho$ is o-minimal. Moreover, $\MODDAG$ is the model companion of $\MODAG$ in $\L_{OG, \sigma}$.
\end{thm}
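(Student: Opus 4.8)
The plan is to reduce everything to the classical theory of ordered vector spaces, exactly as Remark~4.9 suggests, and then to treat the model-companion claim separately, since $\MODDAG$ with $\rho$ left unspecified is not a complete theory. First I would fix $\rho$ and make the identification of Remark~4.9 precise: a model of $\MODDAG_\rho$ is a nontrivial ordered vector space over the ordered field $\Q(\rho)$, with the scalar action of $\rho$ realized by $\sigma$. The point that needs care is that the natural language for $\Q(\rho)$-vector spaces carries a unary function symbol for scalar multiplication by every element of $\Q(\rho)$, whereas $\L_{OG,\sigma}$ carries only $\sigma$. However, each scalar is quantifier-free definable: multiplication by $L\in\Z[\rho]$ is the linear difference operator $L(\cdot)$, and multiplication by $1/L$ (for $0\not\approx L$) has a quantifier-free graph, since divisibility together with Axiom OM make ``$L(z)=x$'' define $z$ as the unique element $(1/L)\cdot x$. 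Hence the two languages are quantifier-free interdefinable, and the cited quantifier elimination for nontrivial ordered vector spaces (\cite{LD}) transfers to $\L_{OG,\sigma}$ after clearing denominators in atomic formulas. Completeness of $\MODDAG_\rho$ then follows in the usual way: the trivial group $\{0\}$ embeds as a common substructure of every model with a fixed quantifier-free type, so quantifier elimination forces any two models to be elementarily equivalent.

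For o-minimality I would read the one-variable definable sets off quantifier elimination directly. Over a model, an atomic $\L_{OG,\sigma}$-formula in $x$ with parameters is equivalent to one of the form $L(x)\lessgtr a$ with $L\in\Z[\sigma]$; by Axiom OM together with divisibility each such formula defines a point, a ray, the whole line, or the empty set. A finite Boolean combination of these is a finite union of points and open intervals, which is precisely o-minimality.

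The substantive part is the model-companion statement. Since every model of $\MODDAG$ is already a model of $\MODAG$ (Definition~4.8 adds axioms to those of a $\MODAG$), one direction of mutual model-consistency is trivial, and it remains to check that every $\MODAG$ embeds in a $\MODDAG$ and that $\MODDAG$ is model complete. For the first, given a $\MODAG$ $G$ with its $\rho$, I would form a divisible hull by localizing the $\Z[\rho]$-module $G$ at the nonzero operators and passing to the $\Q(\rho)$-vector space it generates (enlarging to a nontrivial space if $G=\{0\}$); the reverse-lexicographic ordering used in the proof of Theorem~3.2 shows this hull is again a $\MODAG$ satisfying divisibility, i.e.\ a $\MODDAG$, and that it retains the same $\rho$. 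For model completeness, the key observation---and the place where Axiom OM does the real work---is that any $\L_{OG,\sigma}$-embedding $e\colon G\hookrightarrow H$ of nontrivial $\MODAG$s preserves $\rho$, that is, $ptp_{\Z[\sigma]}(G)=ptp_{\Z[\sigma]}(H)$. Indeed $e$ preserves atomic and negated-atomic formulas, so $L\in ptp_{\Z[\sigma]}(H)$ immediately yields $L\in ptp_{\Z[\sigma]}(G)$; conversely, if $L\in ptp_{\Z[\sigma]}(G)$ then $L(e(g))>0$ for some positive $g$, and the trichotomy of Axiom OM in $H$ upgrades this single witness to $L(h)>0$ for all $h>0$, giving $L\in ptp_{\Z[\sigma]}(H)$. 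Consequently any embedding of $\MODDAG$s is an embedding between two models of a single complete theory $\MODDAG_\rho$, hence elementary by the quantifier elimination above; this is exactly model completeness of $\MODDAG$.

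The main obstacle I expect is precisely this $\rho$-preservation step together with the hull construction: one must check that Axiom OM is strong enough to make ``having the same $\rho$'' an embedding-invariant property, so that the \emph{incomplete} theory $\MODDAG$ can nonetheless be model complete, and that the divisible hull can always be realized inside a nontrivial $\MODDAG$ without disturbing $\rho$. Once these are secured, the reduction to ordered vector spaces renders the remaining verifications routine.
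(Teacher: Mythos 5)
Your proposal is correct and takes essentially the same approach as the paper: the paper's proof simply cites the second author's thesis, with Remark 4.9 indicating exactly the reduction you carry out, namely identifying models of $\MODDAG_\rho$ with nontrivial ordered vector spaces over $\Q(\rho)$ and transferring quantifier elimination from \cite{LD}. Your treatment of the model-companion clause (divisible hulls, plus the observation that Axiom OM makes $\rho$ an embedding invariant, so that model completeness of the incomplete theory $\MODDAG$ follows from quantifier elimination of each complete $\MODDAG_\rho$) fills in the details that the paper leaves to that citation.
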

\begin{proof}
See \cite[Lemma 2.12, Lemma 2.14, Theorem 2.15]{KP}.
\end{proof}

\begin{rmk}
Observe that, because of Axiom OM, div-MODAGs are essentially of 4 different kinds up to elementary equivalence:
\begin{itemize}
\item ``Non-algebraic type'' : Indexed by a transcendental real number $\rho\in(0,\infty]$
\item ``Algebraic type with equality'' : Indexed by a real algebraic number $\rho\in(0, \infty)$ such that $\sigma(x) = \rho\cdot x$ for all $x > 0$, and is denoted by ``=$\rho$''
\item ``Algebraic type with $<$'':  Indexed by a real algebraic number $\rho\in(0, \infty)$ such that $\sigma(x) < \rho\cdot x$ but infinitesimally close to $\rho\cdot x$ for all $x > 0$, and is denoted by ``$<\!\!\rho$''
\item ``Algebraic type with $>$'':  Indexed by a real algebraic number $\rho\,\in(0, \infty)$ such that $\sigma(x) > \rho\cdot x$ but infinitesimally close to $\rho\cdot x$ for all $x > 0$, and is denoted by ``$>\!\!\rho$''.
\end{itemize}
\end{rmk}

\subsection{$n$--sums}
In the previous subsection, we saw that a uniform increasing behavior of $\sigma$ is good enough to get a model companion. But we do not need it to be uniformly increasing on the whole universe -- that is too restrictive. It suffices to have $\sigma$ behave uniformly on pieces. We make sense of this in our first construction -- the direct sum construction. We postpone the discussion of our second construction, which is a quotient construction, to the appendix.

\begin{defn}
For $n\ge 1$, $G=\bigoplus_{i<n}H_i$ is called an {\it $n$--sum} if each $H_i$ is a div-MODAG. The operations $+$, $-$, $\sigma$ are all defined coordinatewise, and the linear ordering is `reverse lexicographic', i.e., $g = (g_0, \ldots, g_{n - 1}) < 0$ iff $H_i\models g_i < 0$, where $i$ is maximal such that $g_i\not= 0$.
\end{defn}

Our goal is to prove the following theorem.
\begin{thm}
For all $n\ge 1$, every $n$--sum $G$ has a model complete $\L_{OG, \sigma}$-theory.
\end{thm}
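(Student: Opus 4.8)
The plan is to prove model completeness via Robinson's test: it suffices to show that whenever $G\subseteq G'$ are models of $\mathrm{Th}(G)$, the inclusion is elementary, equivalently that $G$ is existentially closed in every such extension. I would proceed by induction on $n$. The base case $n=1$ is immediate, since a $1$-sum is a single $\MODDAG$, which is model complete (indeed admits quantifier elimination) by Theorem 4.11. Note that models of $\mathrm{Th}(G)$ need not be literal $n$-sums, which is one source of the difficulties below.

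For the inductive step, write $G=B\oplus H_{n-1}$, where $B=\bigoplus_{i<n-1}H_i$ is the $(n-1)$-sum sitting inside $G$ as a convex $\sigma$-invariant subgroup with ordered-difference quotient $G/B\cong H_{n-1}$, a $\MODDAG$ with multiplier $\rho_{n-1}$. This exhibits $G$ as an extension $0\to B\to G\to H_{n-1}\to 0$ in which $B$ is model complete by the induction hypothesis and $H_{n-1}$ is model complete by Theorem 4.11. The reverse-lexicographic ordering of Definition 4.13 is exactly what makes $B$ convex and the quotient order well defined, so the order on $G$ is recovered from the order on $B$, the order on $H_{n-1}$, and the splitting.

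The crux is to show that any $\L_{OG,\sigma}$-embedding $G\hookrightarrow G'$ between models respects this filtration, i.e.\ carries the bottom convex subgroup $B$ into the corresponding $B'\subseteq G'$ and thereby induces an embedding $H_{n-1}\hookrightarrow H'_{n-1}$ on the top quotient. Two facts drive this. First, an embedding of ordered abelian groups preserves archimedean domination in both directions (if $m|x|<|y|$ for all $m\in\Z$ in $G$, the same holds of the images), so the coarse archimedean gaps separating the layers are preserved. Second --- and this is where the automorphism is essential --- because Axiom OM \emph{fails} across distinct layers (a linear difference operator $L\in\Z[\sigma]$ acts on the $i$th layer as evaluation $L(\rho_i)$, and these values can differ in sign or vanish on different layers), the layers are detected by the quantifier-free sign behavior of the operators $L(x)$. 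I expect the main obstacle to be the degenerate case in which two layers carry the \emph{same} $\rho$: then no single difference operator separates them, and I must argue that the archimedean valuation together with the ordered $\Q(\rho)$-module structure still pins the filtration down up to the data that $\mathrm{Th}(G)$ records, so that elementarity holds even though the literal decomposition is not canonical.

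Granting layer preservation, I would finish by a witness-lifting argument for Robinson's test. Given an existential formula $\exists\bar{y}\,\varphi(\bar{a},\bar{y})$ with $\bar{a}\in G$ that holds in $G'$, I reduce modulo $B'$ to obtain witnesses for the induced formula in the top quotient $H_{n-1}$ (using model completeness of the $\MODDAG$ $H'_{n-1}$), then correct the witnesses inside the convex subgroup $B$ (using the induction hypothesis for the $(n-1)$-sum), and assemble a genuine solution in $G$ by the reverse-lex amalgamation of the two pieces. The delicate point is the simultaneous control of the order and the group-plus-$\sigma$ relations across the short exact sequence: one must choose the lift in $B$ so that all quantifier-free order and difference-operator conditions tying the top and bottom layers together are met, which is precisely what the o-minimality and quantifier elimination of each $\MODDAG$ layer (Theorem 4.11) supply.
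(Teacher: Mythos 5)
Your overall strategy (induction on the number of layers, Robinson's test, layer preservation, witness lifting) is a reasonable alternative to the paper's argument, but as written it has two genuine gaps, and the paper's proof is organized precisely to avoid both. First, the degenerate case you flag is not a technicality you can argue around: when adjacent layers are elementarily equivalent, layer preservation is simply \emph{false}. For instance, if $H_{n-2}\equiv H_{n-1}$ are both copies of $(\Q,+,-,<,\sigma)$ with $\sigma(x)=2x$, then $B=U_{n-1}$ is not definable in $\L_{OG,\sigma}$, and embeddings between models of $Th(G)$ need not carry $B$ into any ``corresponding'' $B'$ (indeed $\Q_2\oplus\Q_2$ is itself a model of $\MODDAG_{\rho}$ with $\rho=2$, so its theory forgets the filtration entirely). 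The paper's fix is not to pin the filtration down but to discard it: it first shows that if $H_j\equiv H_{j+1}$ then $G$ is $\tau$-elementarily equivalent to the $(n-1)$-sum with $H_{j+1}$ deleted, so one may assume $G$ is \emph{reduced} (Definition 4.21); only then are the predicates $U_\alpha$ definable by both an existential and a universal $\tau$-formula (Lemma 4.22, via representative sequences, Lemma 4.24), which is what makes your layer-preservation step legitimate. Your plan needs this reduction, and it is absent.

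Second, and more structurally, your assembly step does not go through with the decomposition you chose. In $G=B\oplus H_{n-1}$ there is in general no definable projection onto the top factor: $B$ may be definable, but the complementary copy of $H_{n-1}$ inside $G$ is not (e.g.\ when $H_{n-1}$ is of transcendental type or of type ``$<\!\!\rho$'', it is not the kernel of any $L\in\Z[\sigma]$). Consequently an existential statement about $G$ does not factor into ``an existential about $G/B$ plus an existential about $B$'': the correction problem you pose in $B$ depends on a choice of lift of the quotient witnesses, and quantifying over lifts is a quantifier over $G$, not over $B$, so Robinson's test cannot be closed off this way without further ideas. This is exactly why the paper uses a different decomposition: it fixes an operator $L^*$ witnessing non-dullness and splits $G=K_{L^*}\oplus R_{L^*}$ as kernel plus range. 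There \emph{both} summands and both projections $k_L,r_L$ are $0$-definable, so every quantifier-free formula provably decomposes into a boolean combination of $K$-formulas in $k(\bar z)$ and $R$-formulas in $r(\bar z)$ (Proposition 4.18); the induction then runs at the level of quantifier elimination in the expanded language $\tau_n^*$ (Corollary 4.19), gives model completeness in $\tau_n$ (Remark 4.20), and descends to $\tau$ only at the end via the reduced-sum lemma. If you want to keep your bottom-subgroup/quotient picture, you would additionally need something like the appendix's quotient theorem (Theorem 6.2), which requires definability of $B$ and still does not by itself produce the definable splitting your witness-lifting needs.
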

To prove this theorem we need to first understand $n$--sums as structures in various other languages as follows. Let
\begin{itemize}
\item $\tau := \L_{OG, \sigma} = \{+, -, 0, <, \sigma\}$
\item $\tau_n := \tau \cup \{U_i \mid i \le n\}$
\item $\tau_n^* := \tau_n \cup \{K_L, R_L, k_L, r_L\mid L \mbox{ is a unary linear difference operator}\}$,
\end{itemize}
where each $U_i$, $K_L$ and $R_L$ are unary predicate symbols, and each $k_L$ and $r_L$ are unary function symbols. Although the above theorem sounds very expected, our proof technique is rather unique. We expand the language $\tau$ by adding predicates which are not in general definable in $\tau$. Our approach is to first show that $Th(G)$ eliminates quantifiers in the bigger language $\tau_n^*$, which immediately implies that $Th(G)$ is model complete in the language $\tau_n$. We then show that this implies that $Th(G)$ is model complete in the language $\tau$. It is noteworthy, however, that an arbitrary $n$-sum may have some subsets that are $\tau_n$-definable, but not $\tau$-definable. For example, even if $G$ is reduced (see Definition 4.21), $K_L\cap U_\ell$ might not be $\tau$-definable for some choice of $L$ and $\ell$. We start with the following definition.

\begin{defn}
A {\it graded $n$--sum}, considered as a $\tau_n$-structure, is an $n$--sum expanded by interpreting each $U_i$ as $\bigoplus_{j < i}H_j$ (for $i = 0$, we interpret $U_0$ by $\{0\}$).
\end{defn}

Since each ``coordinate'' $H_i$ of an $n$--sum $G$ is a div-MODAG, every linear difference operator $L$ on $H_i$ is either trivial or is $1-1$. If $G$ shares a similar property, understanding the model theory of $G$ becomes very easy and we call such a $G$ dull.

\begin{defn}
A graded $n$-sum $G$ is {\it dull} if, for every linear difference operator $L$, we have $\ker_G(L) = G$ or $\ker_G(L) = \{0\}$.
\end{defn}

\begin{thm}
Let $G$ be any dull graded $n$--sum. Then $Th(G)$ eliminates quantifiers in the language $\tau_n$.
\end{thm}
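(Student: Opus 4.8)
The plan is to prove quantifier elimination by the standard substructure-embedding test: $Th(G)$ eliminates quantifiers in $\tau_n$ if and only if, for all $\M, \mN \models Th(G)$ with $\mN$ sufficiently saturated, every $\tau_n$-embedding $f : A \to \mN$ of a small $\tau_n$-substructure $A \le \M$ and every $a \in \M$, the map $f$ extends to a $\tau_n$-embedding of the substructure $A\langle a\rangle = A + \Z[\sigma]a$ generated by $A$ and $a$. Since only one new element is adjoined, it suffices to handle this one-point extension.

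First I would record the first-order structural consequences of dullness, so that they hold in every model of $Th(G)$, not merely in $G$. Dullness says that for each linear difference operator $L$ the kernel $\ker_G(L)$ is $G$ or $\{0\}$; equivalently, all blocks $H_i$ share a common kernel ideal in $\Z[\sigma]$, so the $\Z[\sigma]$-action factors through the common domain $\Z[\rho]$ and every model of $Th(G)$ is a torsion-free divisible $\Z[\rho]$-module, i.e.\ a $\Q(\rho)$-vector space. Moreover each $U_i$ is a \emph{convex} $\Q(\rho)$-subspace, the chain $\{0\}=U_0 \subsetneq U_1 \subsetneq \cdots \subsetneq U_n$ is definable, and each successive quotient $U_{i+1}/U_i$, with its induced order and $\sigma$-action, is elementarily a $\MODDAG$ of the same type as $H_i$, to which Theorem~4.12 applies. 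The convexity of the $U_i$ is precisely the reverse-lexicographic feature: the global order is recovered from the order on the top nonzero quotient.

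Next I would read off the quantifier-free $\tau_n$-type of $a$ over $A$. The substructure $A$ is an ordered $\Z[\sigma]$-submodule together with the traces $U_i\cap A$, and the type is determined by three pieces of data: (i) the \emph{grading datum}, namely the top block $m$ of $a$ (the least $i$ with $a\in U_{i+1}$) together with the finitely many predicates $U_j(L(a))$ for the relevant $L$; (ii) the \emph{dependence datum}, namely whether some nonzero $L\in\Z[\sigma]$ satisfies $L(a)=b$ for a (then unique) $b\in A$, or $a$ is $\Q(\rho)$-independent over $A$; and (iii) the \emph{order datum}, i.e.\ the cut that $a$ fills in $A$, which by convexity of the $U_i$ is controlled entirely by the image of $a$ in the top quotient $U_{m+1}/U_m$ relative to the image of $A$ there. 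That these three pieces determine the entire quantifier-free type is checked directly, using quantifier elimination inside each block; it is the graded analogue of the amalgamation observation in Remark~2.5.

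Finally I would realize the $f$-image of this type in $\mN$. Using saturation of $\mN$ together with the quantifier elimination and o-minimality of each block (Theorem~4.12), I would choose $a'\in\mN$ with the correct grading datum, satisfying the forced relation $L(a')=f(b)$ in the dependent case (existence from divisibility, uniqueness from injectivity of $L$, which is exactly where dullness enters) or $\Q(\rho)$-independent over $f(A)$ in the independent case, and filling the $f$-image of the cut inside the top quotient $U_{m+1}/U_m$ over $f(A)$. One then checks that $a\mapsto a'$ yields a $\tau_n$-embedding of $A+\Z[\sigma]a$, i.e.\ is compatible with $+,-,\sigma$, the predicates $U_i$, and $<$. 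I expect the compatibility with $<$ to be the main obstacle: the position of $a'$ must be correct against \emph{all} of $f(A)$ simultaneously, not just within block $m$, and this is precisely where I would lean on the convexity of the $U_i$ (reverse-lex) to reduce the global order comparison to a single within-block comparison, and on the o-minimality of each $\MODDAG$ block to realize that within-block cut in the saturated model. A secondary subtlety to handle with care is that distinct blocks may carry different positivity types (orderings of $\Z[\rho]$) even though dullness forces a common kernel, so every order comparison must keep its block index attached in order to invoke the correct block ordering.
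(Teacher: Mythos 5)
Your overall skeleton (the saturated-model one-point embedding test, plus the structural consequences of dullness: common kernel ideal, torsion-free divisible $\Z[\rho]$-module structure, convexity of the $U_i$, $\MODDAG$ quotients) is sound, and since the paper's own proof is only the remark that it goes ``just like the proof for div-MODAGs,'' filling in details is exactly the right exercise. But there is a genuine gap in your characterization of the quantifier-free type, and it propagates into the realization step. First, the order datum is \emph{not} controlled by the top quotient: take $G = H_0 \oplus H_1$ dull, $c = (0,h)$ with $h > 0$, and $A = \Z[\sigma]\cdot c$. The elements $a_1 = (g,h)$ and $a_2 = (g',h)$ with $g > 0 > g'$ have the same top block, the same grading datum ($L(a_i)\notin U_1$ for all $L\neq 0$), the same dependence datum (both are independent over $A$), and the \emph{same} image in $U_2/U_1$ (namely the image of $c$), yet $c < a_1$ while $a_2 < c$: whenever $A$ meets the coset $a + U_m$, the cut of $a$ in $A$ depends on levels below the top. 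Second, your grading datum records only the formulas $U_j(L(a))$, but the quantifier-free type also contains all formulas $U_j(L(a)-b)$ with $b \in A$, and this coset information is not recoverable from your three pieces of data: with the same $A$, the element $a_1=(g,h)$ satisfies $U_1(a_1 - c)$, whereas a suitable $a_3$ in a saturated elementary extension, chosen to fill the same cut of $A$ (just above $c$) but with image in $U_2/U_1$ outside the $\Q(\rho)$-span of the image of $A$, satisfies $a_3 - b \notin U_1$ for every $b\in A$ while having identical grading, dependence, and cut data. Since $U_1(x-c)$ separates the quantifier-free types of $a_1$ and $a_3$, any element $a'$ produced by your recipe must fail to be a $\tau_n$-embedding in at least one of the two cases.

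The repair is a level-by-level recursion, and your proof needs it spelled out. After matching the data in the top quotient, distinguish two cases: if no $c_0\in A$ has $a - c_0 \in U_m$, then the top-quotient cut (realized via saturation and divisibility of the quotient, exactly as in the div-MODAG argument) together with the grading and dependence data does suffice; if some $c_0 \in A$ has $a - c_0 \in U_m$, replace $a$ by $a - c_0$, note that its quantifier-free type over $A$ reduces to its type over the substructure $A \cap U_m$ of a model of the theory of a dull $m$-sum, realize that type recursively, and set $a' = f(c_0) + (\text{the realization below})$. This is precisely the mechanism that the paper's treatment of the non-dull case supplies through the projections $k_L, r_L$ and the induction on $n$ in Proposition 4.18 and Corollary 4.19; in the dull case those projections are trivial (identity or zero), so coset representatives drawn from $A$ must play their role. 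Your closing caveat about blocks carrying different positivity types is correct and relevant (it is why sign computations must carry a block index), but it is the missing coset/recursion bookkeeping, not that caveat, that is the real obstacle.
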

\begin{proof}
The proof is just like the proof for div-MODAGs.
\end{proof}

So the interesting $n$--sums are those which are not dull. To understand these, we need to fix a few notations first. These notations in fact make sense for any arbitrary $n$--sum. So fix an $n$--sum $G$ and a unary linear difference operator $L: G\to G$. Let $K_L := \ker_G(L)$ and $R_L := \mbox{range}_G(L)$. Clearly these sets are $\tau$-definable. $K_L$ is quantifier-free definable, while $R_L$ appears to require an existential formula. It is also easy to see that
$$G\models \forall g\exists!a\exists!b\exists c(g = a + b \;\wedge\; L(a) = 0\;\wedge\; L(c) = b).$$
So, the group $G$ is abstractly isomorphic to $K_L+R_L$. Moreover, this isomorphism is $\tau$-definable: Choose $g\in G$. Write $g = a+b$ with $a\in K_L$ and $b\in R_L$. Then, $A_g := \{h\in G\mid L(h - g) = 0\}$ is $\tau$-definable and is a coset of $K_L$. Thus, $b$ is the unique element of $A_g\cap R_L$, and $a = g - b$. Let $k_L:G\to K_L$ and $r_L:G\to R_L$ denote the maps $g\mapsto a$ and $g\mapsto b$ respectively. Thus, every graded $n$--sum has a natural, 0-definable expansion to a $\tau^*_n$-structure.

Each of these functions $k_L$ and $r_L$ are essentially projections. Specifically, for any fixed $L$, let
$$s := \{i< n\mid \ker_{H_i}(L) = H_i\}\;\;\;\;\;\;\mbox{ and }\;\;\;\;\;\; G_s := \{g\in G\mid g_i = 0 \mbox{ for all } i\not\in s\}.$$
Then $K_L = G_s$ and $k_L$ is the `projection map' $\pi_s: G\to G_s$. Similarly, $R_L = G_{n\setminus s}$ and $r_L$ is the map $\pi_{n\setminus s}$. For each graded $n$--sum $G$, there is a boolean algebra $\mathcal{F}\subseteq\mathcal{P}(n)$ of subsets of $n$, such that for every $s, t\in\mathcal{F}$ with $s\subseteq t$, we have a definable map $\pi_s:G_t\to G_s$. It follows immediately that all of the $\tau^*_n$-operations -- addition, subtraction, $\sigma$, and all of the $\pi_s$'s -- commute with each other.

If $G$ is dull, then for every $L$, $K_L$ and $R_L$ are either $\{0\}$ or $G$, and the functions $k_L, r_L$ are either the identity or constantly zero. Moreover, the associated boolean algebra $\mathcal{F}$ consists of only two elements $\{\emptyset, n\}$.

Now suppose that a given $n$--sum $G = \bigoplus_{i<n}H_i$ is not dull. Fix a particular $L^*$ witnessing this, i.e., $\ker_G(L^*)$ is neither $G$ nor $\{0\}$. To simplify notation, let us denote $K_{L^*}$ by $K$, $R_{L^*}$ by $R$, let $s = \{i\in n\mid \ker_{H_i}(L^*) = H_i\}$ and let $t = n\setminus s$. Define the enumeration functions $i: |s| \to n$ and $j: |t|\to n$ by $i(0) = j(0) = 0$, $i(\alpha + 1)$ be the least element of $s$ that is greater than $i(\alpha)$, and $j(\beta + 1)$ be the least element of $t$ that is greater than $j(\beta)$.

\begin{defn}
A {\it $K$-formula} is a $\tau_n^*$-formula $\varphi$ such that the only $U_\alpha$'s that appear in $\varphi$ are such that $\alpha\in s$. Dually, an {\it $R$-formula} has only $U_\beta$'s with $\beta\in t$.
\end{defn}

We will prove by induction that for every graded $n$--sum $G$, $Th(G)$ admits elimination of quantifiers in the language $\tau_n^*$. The following proposition is the key to our induction.
\begin{prop}
Let $G$ be any non-dull graded $n$--sum. Then every quantifier-free $\tau_n^*$-formula $\theta(\bar{z})$ is $Th(G)$-equivalent to a boolean combination of $K$-formulas $\theta_i^K(k(\bar{z}))$ and $R$-formulas $\theta_j^R(r(\bar{z}))$.
\end{prop}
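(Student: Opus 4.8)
I need to show every quantifier-free $\tau_n^*$-formula in variables $\bar z$ decomposes, modulo $Th(G)$, as a boolean combination of $K$-formulas applied to $k(\bar z)$ and $R$-formulas applied to $r(\bar z)$. The key structural fact I'd exploit is the decomposition $G \cong K \oplus R$ via $g = k_{L^*}(g) + r_{L^*}(g)$ together with the observation (stated just before the proposition) that all $\tau_n^*$-operations commute with the projections $\pi_s$. So $k = k_{L^*}$ projects onto $G_s = K$ and $r = r_{L^*}$ projects onto $G_t = R$, and every element splits as $z = k(z) + r(z)$.

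**Reduction to atomic formulas.** Let me think about this. Since a quantifier-free formula is a boolean combination of atomic formulas, and the target class (boolean combinations of $K$- and $R$-formulas) is closed under boolean operations, it suffices to handle a single atomic $\tau_n^*$-formula $\theta(\bar z)$.

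**Main idea: split each term and each atom along $K \oplus R$.** The atomic formulas are built from terms using $<$, $=$, the predicates $U_i$, $K_L$, $R_L$, and the graph of the function symbols $k_L, r_L$. My plan is to rewrite each occurrence of a variable $z$ as $k(z) + r(z)$ and push the projections inward. Concretely:

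\begin{itemize}
\item \textbf{Terms.} Every $\tau_n^*$-term $t(\bar z)$ is a combination of $+, -, \sigma$ and the projection functions applied to the variables. Because all these operations commute with $\pi_s$ and $\pi_t$, I can write $t(\bar z) = t^K(k(\bar z)) + t^R(r(\bar z))$, where $t^K$ lives entirely in $K = G_s$ and $t^R$ lives entirely in $R = G_t$. Here I use that $k$ and $r$ are idempotent projections onto complementary summands and that $k_L(r(z)) = 0$, $r_L(k(z)) = 0$ for the relevant operators (since $K_L, R_L$ are themselves $G_{s'}$'s coming from the boolean algebra $\mathcal F$, and these respect the $s/t$ split).
\item \textbf{Equality atoms.} An equation $t_1 = t_2$ becomes $t_1^K + t_1^R = t_2^K + t_2^R$; since $K \cap R = \{0\}$ and the sum is direct, this is equivalent to $t_1^K = t_2^K \wedge t_1^R = t_2^R$, a conjunction of a $K$-atom and an $R$-atom.
\item \textbf{Order atoms.} For $t_1 < t_2$ I use the reverse-lexicographic definition of the ordering on $G = \bigoplus_{i<n} H_i$. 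Because $s$ indexes the ``lower'' coordinates and $t$ the ``higher'' ones (the enumeration functions $i,j$ make this precise), the sign of $t_2 - t_1$ is determined first by the $R$-part (the higher coordinates) and, only if that vanishes, by the $K$-part. Thus $t_1 < t_2$ is $Th(G)$-equivalent to $\big(t_1^R < t_2^R\big) \vee \big(t_1^R = t_2^R \wedge t_1^K < t_2^K\big)$, again a boolean combination of the required form.
\item \textbf{Predicate atoms.} For $U_\alpha$, $K_L$, $R_L$: each of these is $G_{s'}$ for some $s'$ in the boolean algebra $\mathcal F$, so membership splits coordinatewise and hence as a conjunction of a condition on $k(z)$ and a condition on $r(z)$. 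A $U_\alpha$ with $\alpha \in s$ contributes a $K$-formula and with $\alpha \in t$ an $R$-formula; similarly $K_L, R_L$ split according to how $s'$ meets $s$ and $t$. The graphs of the function symbols $k_L, r_L$ are handled by the same commutation, turning them into equations already covered above.
\end{itemize}

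**The main obstacle.** The step I expect to require the most care is the \emph{order} atom, and more specifically verifying that the $s/t$ partition really aligns with the reverse-lexicographic ordering so that the $R$-part dominates. I must check that $s$ and $t = n \setminus s$ are not arbitrary interleaved index sets but can be treated as ``lower'' versus ``upper'' blocks for the purpose of the sign computation; if they interleave, the clean disjunction above has to be replaced by a correct nested comparison that still only mentions $k(\bar z)$ and $r(\bar z)$ separately. The enumeration functions $i,j$ introduced before the proposition are precisely the bookkeeping device for this, and I would lean on them to write the order atom as a finite disjunction over which is the maximal coordinate where $t_2 - t_1$ is nonzero, grouping the cases by whether that coordinate lies in $s$ or $t$. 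Once this is done, every atomic $\tau_n^*$-formula is a boolean combination of $K$-formulas in $k(\bar z)$ and $R$-formulas in $r(\bar z)$, and closing under boolean operations finishes the proof.
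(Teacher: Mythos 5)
Your proposal is correct and follows essentially the same route as the paper: reduce to atomic formulas by boolean closure, use the commutation of $k$ and $r$ with all terms, split the predicate atoms $U_\alpha$, $K_L$, $R_L$ coordinatewise, and handle the order atom by a disjunction over the maximal nonzero coordinate since $s$ and $t$ may genuinely interleave. In particular, the fix you flag under ``the main obstacle'' is precisely the paper's device: reduce $x<y$ to $y-x<0$ and write $x<0$ as $\bigvee_{1\le\alpha\le n}\big(U_\alpha(x)\wedge\neg U_{\alpha-1}(x)\wedge\delta_\alpha(x)\big)$, where $\delta_\alpha(x)$ is $k(x)<0$ or $r(x)<0$ according to whether the top coordinate lies in $s$ or in $t$.
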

\begin{proof}
As both $K$-formulas and $R$-formulas are closed under negation, it suffices to prove that every atomic $\tau_n^*$-formula is of this form. Additionally, since the unary functions $k(x)$ and $r(x)$ commute with every term, it suffices to show that the atomic $\tau_n^*$-formulas $U_\alpha(x)$, $K_L(x)$, $R_L(x)$, and $x < y$ each have such a representation.

To see this, we start with each $U_\alpha(x)$. As $U_0 = \{0\}$, it is equivalent to both a $K$-formula and an $R$-formula. So assume $\alpha > 0$. There are two cases, depending on whether or not $\alpha\in s$. First, suppose that $\alpha\in s$. Then $U_\alpha(x)$ is a $K$-formula. Choose $\delta$ to be the maximal element of range$(j)$ that is below $\alpha$. Then, for any $g\in G$, $U_\alpha(g)$ iff $g_m = 0$ for all $m\in[\alpha, n)$ iff $U_\alpha(k(g))\wedge U_\delta(r(g))$. On the other hand, if $\alpha\in n\setminus s$, then $U_\alpha(x)$ is an $R$-formula. So choose $\gamma\in\mbox{range}(i)$ to be the maximal element that is below $\alpha$. In this case, $U_\alpha(x)$ is equivalent to $U_\gamma(k(x))\wedge U_\alpha(r(x))$.

The verifications that each $K_L, R_L$ (for various $L$) is equivalent to the required form is similar. Finally, in order to show $x < y$ is of this form, it suffices to show that `$x < 0$' has this form (since $x < y$ iff $y - x < 0$). To see this, for $1\le \alpha < n$, let $\delta_\alpha(x) := `k(x) < 0$' if $\alpha\in s$, and $\delta_\alpha(x) := `r(x) < 0$' if $\alpha\not\in s$. It can then be readily checked that `$x < 0$' is $Th(G)$-equivalent to
$$\bigvee_{1\le \alpha \le n} \big(U_\alpha(x) \wedge \neg U_{\alpha - 1}(x) \wedge \delta_\alpha(x)\big),$$
which is a boolean combination of $K$- and $R$-formulas from above.
\end{proof}

\begin{cor}
For every $n\ge 1$, for every graded $n$--sum $G$, $Th(G)$ admits elimination of quantifiers in the vocabulary $\tau_n^*$.
\end{cor}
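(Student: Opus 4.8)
The plan is to prove the statement by induction on $n$, feeding the preceding Proposition and the induction hypothesis into a single quantifier-elimination step. If $G$ is dull (which in particular covers every graded $1$--sum, since the unique coordinate $H_0$ is a div-MODAG, so each linear difference operator is trivial or injective and hence $\ker_G(L)\in\{\{0\},G\}$), then the theorem on dull graded $n$--sums gives quantifier elimination in $\tau_n$; and since for dull $G$ each $K_L,R_L$ is $\{0\}$ or $G$ and each $k_L,r_L$ is the identity or zero, all the extra symbols of $\tau_n^*$ are quantifier-free $\tau_n$-definable, so this upgrades at once to quantifier elimination in $\tau_n^*$. Thus only non-dull $G$ require work.

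Now suppose $G=\bigoplus_{i<n}H_i$ is non-dull, and fix $L^*$, $s$, $t=n\setminus s$, $K=G_s$, $R=G_t$ as in the discussion preceding Definition~4.24. Since $\ker_G(L^*)=G_s$ is neither $\{0\}$ nor $G$, both $s$ and $t$ are nonempty proper subsets of $n$, whence $|s|,|t|<n$. The first thing I would record is that $K$ is itself a graded $|s|$--sum and $R$ a graded $|t|$--sum, and that $K$-formulas over $G$ translate \emph{faithfully} into $\tau_{|s|}^*$-formulas over $K$: under the enumeration $i:|s|\to s$ one has $G_s=\bigoplus_{\gamma<|s|}H_{i(\gamma)}$ with the inherited reverse-lexicographic order, $U_{i(\gamma)}\cap G_s$ is exactly the $\gamma$-th grading predicate of $K$, and because all the projections commute, the restrictions to $G_s$ of the operations $K_L,R_L,k_L,r_L$ are precisely the corresponding $\tau_{|s|}^*$-operations of $K$. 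Hence each $K$-formula $\varphi$, read off at $k(\bar{z})$, has a counterpart $\tilde\varphi$ with $G\models\varphi(k(\bar{g}))$ iff $K\models\tilde\varphi(k(\bar{g}))$, and dually for $R$-formulas and $R$.

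With this translation in hand it suffices to eliminate one existential quantifier from a quantifier-free matrix. Given $\exists w\,\theta(w,\bar{z})$ with $\theta$ quantifier-free, the preceding Proposition rewrites $\theta$ as a boolean combination of $K$-formulas in $k(w),k(\bar{z})$ and $R$-formulas in $r(w),r(\bar{z})$; passing to disjunctive normal form, distributing $\exists$ over $\vee$, and using that $K$- and $R$-formulas are closed under negation, I may assume $\theta(w,\bar{z})=\alpha(k(w),k(\bar{z}))\wedge\beta(r(w),r(\bar{z}))$ with $\alpha$ a $K$-formula and $\beta$ an $R$-formula. Since $w\mapsto(k(w),r(w))$ is a bijection $G\to K\oplus R$ (the displayed $\forall g\,\exists!a\,\exists!b$ statement), the two coordinates may be chosen independently, so the existential decouples:
$$\exists w\,\theta(w,\bar{z})\ \equiv\ \big(\exists u\in K\,\alpha(u,k(\bar{z}))\big)\wedge\big(\exists v\in R\,\beta(v,r(\bar{z}))\big).$$
Applying the induction hypothesis to the graded $|s|$--sum $K$ via the translation above, $\exists u\in K\,\alpha$ is $Th(K)$-equivalent to a quantifier-free $\tau_{|s|}^*$-formula, which pulls back to a quantifier-free $K$-formula $\alpha'(k(\bar{z}))$; likewise $\exists v\in R\,\beta$ becomes a quantifier-free $R$-formula $\beta'(r(\bar{z}))$. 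Therefore $\exists w\,\theta\equiv\alpha'(k(\bar{z}))\wedge\beta'(r(\bar{z}))$ is quantifier-free in $\tau_n^*$, closing the induction.

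I expect the main obstacle to be the faithful-translation step: verifying that the reduct of $G$ to the $K$-symbols, evaluated at the $k$-coordinates, genuinely \emph{is} the graded $|s|$--sum $K$ together with its own $\tau_{|s|}^*$-structure — this requires the re-indexing of grading predicates through $i$ and $j$, and the compatibility of all the projection maps $\pi_\bullet$ — and that the existential really does split into independent searches in $K$ and $R$. Once that bookkeeping is nailed down, the quantifier elimination itself is a short formal argument driven entirely by the Proposition and the induction hypothesis.
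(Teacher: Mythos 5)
Your proposal is correct and follows essentially the same route as the paper's proof: induction on $n$, the dull case (including $n=1$) handled by Theorem~4.16, and the non-dull case handled by combining Proposition~4.18 with the $K\oplus R$ decomposition to split the existential quantifier and invoke the induction hypothesis on the smaller sums. The only difference is that you spell out two points the paper leaves implicit --- the upgrade from $\tau_n$ to $\tau_n^*$ in the dull case and the faithful translation of $K$- and $R$-formulas into the $\tau_{|s|}^*$- and $\tau_{|t|}^*$-structures on $K$ and $R$ --- which is a welcome tightening, not a departure.
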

\begin{proof}
We prove this by induction on $n\ge 1$. For $n = 1$, $G$ is necessarily dull, so this follows from Theorem 4.16. So assume that the corollary holds for all $n'<n$ and fix a graded $n$--sum $G$. If $G$ is dull, then again the result follows from Theorem 4.16. So assume that $G$ is not dull. Fix an $L^*$ witnessing this as above, and use the notation from there.

It suffices to show that for any quantifier-free $\tau_n^*$-formula $\theta(x, \bar{y})$, $\exists x\theta(x, \bar{y})$ is $Th(G)$-equivalent to a quantifier-free formula $\psi(\bar{y})$.

Fix such a $\theta(x, \bar{y})$. By Proposition 4.18 and the fact that $K$-formulas and $R$-formulas are both closed under conjunction, $\theta(x, \bar{y})$ is equivalent to a disjunction of a conjunction of a $K$-formula and an $R$-formula. As existential quantification commutes with disjunction, it suffices to show that
$$\exists x\big[\theta^K(k(x), k(\bar{y})) \wedge \theta^R(r(x), r(\bar{y}))\big]$$
is $Th(G)$-equivalent to a quantifier-free formula. But, by our decomposition result, the displayed equation is equivalent to
$$(\exists x\in K)\theta^K(x, k(\bar{y})) \wedge (\exists x\in R)\theta^R(x, r(\bar{y})).$$
Observe that $K$ and $R$ can be viewed as being ``isomorphic'' to $|s|$--sum and $|t|$--sum respectively. By our choice of $L^*$, both $|s|$ and $|t|$ are strictly less than n. So induction hypothesis applies and both of the formulas above are $Th(K)$--equivalent and $Th(R)$--equivalent to quantifier-free formulas.
\end{proof}

\begin{rmk}
Since $\tau_n^*$ is a 0-definable expansion of $\tau_n$ and that too by at most existential formulas, it follows immediately from Corollary 4.19 that every graded $n$--sum $G$ has a model complete $\tau_n$-theory (as opposed to $\tau_n^*$).
\end{rmk}

To finish the proof of Theorem 4.13, we need to establish model completeness in the smaller language $\tau$. This is not immediate, as a graded $n$--sum may have more definable sets than its $\tau$-reduct. To that end, we make the following definition.
\begin{defn}
A (graded) $n$--sum $G = \bigoplus_{i < n}H_i$ is {\it reduced} if $H_i\not\equiv H_{i+1}$ for all $i < n-1$.
\end{defn}

\begin{lem}
Let $G$ be a reduced $n$--sum. Then each $U_\alpha$, $\alpha \le n$, is definable by a universal and an existential $\tau$-formula.
\end{lem}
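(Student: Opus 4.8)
The plan is to define each cut $U_\alpha$ from the signs of linear difference operators together with the convexity of $U_\alpha$ in the reverse-lexicographic order. First I would dispose of the endpoints: $U_0 = \{0\}$ is defined by $x = 0$ and $U_n = G$ by $x = x$, each of which is simultaneously universal and existential. For $1 \le \alpha \le n-1$ it suffices to define the positive part $U_\alpha^{>0} := \{x > 0 : x \in U_\alpha\}$, since $x \in U_\alpha$ iff $x = 0 \vee x \in U_\alpha^{>0} \vee (-x) \in U_\alpha^{>0}$, and this boolean recipe turns an existential (respectively universal) $\tau$-definition of $U_\alpha^{>0}$ into one of $U_\alpha$. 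Note that $U_\alpha^{>0}$ is exactly the set of positive $x$ whose top nonzero coordinate lies in a block $< \alpha$.

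The core mechanism is local to the top coordinate. Because $G$ is reduced, $H_{\alpha-1} \not\equiv H_\alpha$; since by Remark 4.12 elementary equivalence of div-MODAGs is governed by the signs of the operators in $\Z[\sigma]$ on positive elements, there is a linear difference operator $L_\alpha \in \Z[\sigma]$ whose sign on positives differs between $H_{\alpha-1}$ and $H_\alpha$. The key observation is that if $x > 0$ has top nonzero coordinate in block $m$ and $L_\alpha$ does not vanish on $H_m$, then the top coordinate of $L_\alpha(x)$ is again at $m$, so the sign of $L_\alpha(x)$ computed in $G$ equals the sign of $L_\alpha$ on the positives of $H_m$. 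Thus, once the top coordinate of $x$ has been pinned to $\{\alpha-1, \alpha\}$, the sign of $L_\alpha(x)$ reads off which of the two blocks $x$ belongs to, giving a quantifier-free cut. I would record the degenerate sub-cases of Remark 4.12 separately (one side of type ``$=\rho$'' so that $L_\alpha$ vanishes there, or the two sides being ``$<\!\rho$'' versus ``$=\rho$'' versus ``$>\!\rho$''); each still yields a quantifier-free $\tau$-condition on $x$, $\sigma(x)$ and $L_\alpha(x)$ separating the two block types.

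To pass from this adjacent separation to the full cut at $\alpha$ I would argue by induction, mirroring the proof of Corollary 4.19. If $G$ is dull there is nothing to separate beyond the endpoints; otherwise fix $L^*$ witnessing non-dullness and split $G$ as $K_{L^*} + R_{L^*}$, where $K_{L^*} \cong G_s$ and $R_{L^*} \cong G_t$ (with $t = n \setminus s$) are sums over fewer blocks and are $\tau$-definable — the kernel quantifier-freely, the range existentially. As in Proposition 4.18 one expresses $U_\alpha$ of $G$ from the projections $k_{L^*}, r_{L^*}$ together with the appropriate $U$-predicates of $K_{L^*}$ and $R_{L^*}$, and the reassembly preserves both the existential and the universal form. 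Here convexity does the essential work: the universal definition says ``$x$ is below every positive element whose top coordinate lies in a block $\ge \alpha$,'' and any such element automatically dominates all of $U_\alpha$, while the existential definition produces a witness in a suitable range $R_L$ placing $x$ below the boundary.

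\textbf{The hard part} will be that operator-signs alone cannot separate two \emph{non-consecutive} blocks that happen to share a theory, so no single $L_\alpha$ can cut $\{0, \dots, \alpha-1\}$ from $\{\alpha, \dots, n-1\}$ in one shot; this is precisely why reducedness (only adjacent blocks must differ) and convexity are both indispensable, and it forces the inductive, order-aware bookkeeping rather than a closed-form algebraic formula. A related technical nuisance is that the sub-sums $K_{L^*}, R_{L^*}$ need not themselves be reduced, since indices consecutive in $s$ may have been non-adjacent in $n$ and could carry equal theories. Consequently, before invoking the induction hypothesis I would merge each maximal run of elementarily equivalent consecutive blocks — whose internal boundaries are genuinely undefinable — into a single coarse block, verify that every cut $U_\alpha$ that we must define survives as a boundary between inequivalent coarse blocks, and only then apply the inductive definitions. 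Reconciling the existential and universal versions across both this merging and the $K_{L^*}/R_{L^*}$ split is where the bulk of the careful (but routine) verification lies.
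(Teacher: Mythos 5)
Your opening moves (the endpoints, the reduction to positive elements, and separating \emph{adjacent} blocks by the sign of a suitable operator $L_\alpha$ once the top coordinate is known) are fine and consistent with the paper. The genuine gap is in the inductive assembly via the $K_{L^*}/R_{L^*}$ split. To reassemble $U_\alpha(G)$ from $k(x)$ and $r(x)$ you need the cut of $K$ at position $|s\cap\alpha|$ and the cut of $R$ at position $|t\cap\alpha|$ to be definable in $K$ and $R$ \emph{as standalone $\tau$-structures}. But blocks that were non-adjacent in $G$ (hence permitted to be elementarily equivalent, since reducedness constrains only adjacent pairs) become adjacent inside $K$ or $R$, and a cut between two elementarily equivalent div-MODAGs is not definable at all, even with parameters: for instance $\Q_2\oplus\Q_2$ (where $\Q_m$ denotes $(\Q,+,-,0,<,x\mapsto mx)$) is a model of the o-minimal theory $\MODDAG_\rho$ with $\rho\cdot x=2x$, and the convex subgroup $\Q_2\oplus 0$ has no supremum in the structure, so it is not a finite union of points and intervals. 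Concretely, take $G=\Q_2\oplus\Q_3\oplus\Q_2$, which is reduced. Every non-dullness witness $L^*$ has kernel $\Q_2\oplus 0\oplus\Q_2$ or $0\oplus\Q_3\oplus 0$, so one of $K,R$ is isomorphic to $\Q_2\oplus\Q_2$, and expressing $U_1(G)=\Q_2\oplus 0\oplus 0$ through $k(x),r(x)$ requires exactly the undefinable middle cut of that sub-sum. Your proposed repair makes this worse rather than better: merging the two equivalent copies of $\Q_2$ into one coarse block erases the very cut you need, so the ``verification that every cut we must define survives as a coarse boundary'' simply fails here, and the proposal has no fallback. Note that $U_1(G)$ \emph{is} definable in $G$ --- but only because $\Q_3$ sits between the two copies of $\Q_2$; definability of this cut is an order-interleaving phenomenon, and the interleaving is precisely the information the $K/R$ decomposition throws away.

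The paper's proof avoids splitting altogether. It introduces \emph{representative sequences} (Definition 4.23, Lemma 4.24): tuples $0<a_0<\cdots<a_{n-1}$ cut out by a single quantifier-free $\tau$-formula combining rational-multiplier inequalities $q_ia_i<\sigma(a_i)<r_ia_i$ (with the intervals $(q_i,r_i)$ chosen pairwise equal or disjoint) and minimal-polynomial sign conditions matching the species of each $H_i$. Because consecutive blocks are inequivalent and the whole tuple is constrained by the global order at once, each $a_i$ is forced into block $i$: the inequivalent intermediate blocks anchor the equivalent outer ones. Then for $x>0$, $U_\alpha(x)$ holds iff some representative sequence has $x<a_{\alpha-1}$ (existential) iff every representative sequence has $x<a_\alpha$ (universal). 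If you wanted to salvage your induction, you would have to carry along, as parameters, witnesses from the other side of the split recording where the equivalent blocks sit relative to the anchoring ones --- at which point you have reconstructed the paper's representative sequences.
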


To prove this result, we need another definition. Observe that if $G = \bigoplus_{i < n}H_i$ is an $n$--sum, then each $H_i$, being a div-MODAG, is one of the 4 species mentioned in Remark 4.11 and has a real number $\rho_i$ associated to it.

Fix rational numbers $q_i < r_i\in\Q\cup\{+\infty\}$, for $i < n$, such that
\begin{itemize}
\item $\rho_i\in(q_i, r_i)$
\item for $i\not=j$, the intervals $(q_i, r_i)$ and $(q_j, r_j)$ are either the same or disjoint.
\end{itemize}
(If some $H_i$ is of ``infinite type'', choose $r_i = +\infty$.)

\begin{defn}
A sequence $\langle a_i \mid i < n \rangle$ of elements from $G$ is a {\it representative sequence} if
\begin{enumerate}
\item $0 < a_0 < \cdots < a_{n - 1}$
\item $q_i a_i < \sigma(a_i) < r_i a_i$, for each $i < n$
\item If $H_i$ is of type ``=$\rho_i$'' for some algebraic number $\rho_i$, then $L(a_i) = 0$, where $L$ is a minimal polynomial that $\rho_i$ satisfies, for each $i < n$
\item If $H_i$ is of type ``$<\!\!\rho_i$'' for some algebraic number $\rho_i$, then $L(a_i) < 0$, where $L$ is a minimal polynomial that $\rho_i$ satisfies, for each $i < n$
\item If $H_i$ is of type ``$>\!\!\rho_i$'' for some algebraic number $\rho_i$, then $L(a_i) > 0$, where $L$ is a minimal polynomial that $\rho_i$ satisfies, for each $i < n$
\end{enumerate}
\end{defn}

The following obvious result is the key lemma about representative sequences.

\begin{lem}
Suppose $G$ is a reduced $n$--sum. Then
\begin{enumerate}
\item Representative sequences exist
\item If $\langle a_i\mid i < n\rangle$ is any representative sequence, then $a_i\in U_{i+1}\setminus U_i$ for $i < n$ (recall $U_0 = \{0\}$ and $U_n = G$)
\item The formula $\theta(x_0, \ldots, x_{n-1})$ asserting that ``$\langle x_0, \ldots, x_{n-1}\rangle$ is a representative sequence'' is quantifier-free definable in $\tau$.
\end{enumerate}
\end{lem}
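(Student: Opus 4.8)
The plan is to dispatch (1) and (3) quickly and to concentrate the effort on (2). For (3), each clause of ``$\langle x_0,\dots,x_{n-1}\rangle$ is a representative sequence'' is already quantifier-free in $\tau$: condition (1) is the atomic chain $0<x_0<\cdots<x_{n-1}$; in condition (2) the scalars $q_i,r_i$ are rational, so (using divisibility) $q_ix_i<\sigma(x_i)$ and $\sigma(x_i)<r_ix_i$ unwind to atomic formulas after clearing denominators, with the upper clause simply dropped when $r_i=+\infty$; and conditions (3)--(5) are the atomic formulas $L(x_i)=0$, $L(x_i)<0$, $L(x_i)>0$ for the fixed difference operator $L$. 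For (1), I would take each $a_i$ to be an arbitrary positive element of the $i$-th summand $H_i$, i.e.\ $a_i$ supported on coordinate $i$. The reverse-lexicographic order then gives $0<a_0<\cdots<a_{n-1}$ automatically. Condition (2) holds because $\sigma$ acts on $H_i$ as (something infinitesimally close to) multiplication by $\rho_i$, and $\rho_i\in(q_i,r_i)$ with $q_i,r_i$ rational, so the infinitesimal discrepancy cannot cross the rational bounds; conditions (3)--(5) hold for every positive element of $H_i$ by Axiom OM inside $H_i$, since the sign of the minimal polynomial $L$ on the positive cone of $H_i$ is exactly what the labels ``$=\rho_i$'', ``$<\rho_i$'', ``$>\rho_i$'' record.

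The heart of the lemma is (2). Writing $\mathrm{top}(g)$ for the largest coordinate on which $g$ is nonzero, the statement $a_i\in U_{i+1}\setminus U_i$ is exactly $\mathrm{top}(a_i)=i$. I would organize the argument around three facts about $m_i:=\mathrm{top}(a_i)$. First, from condition (1) and the reverse-lexicographic order, if $0<g<h$ then $\mathrm{top}(g)\le\mathrm{top}(h)$; hence $m_0\le m_1\le\cdots\le m_{n-1}$ is a nondecreasing sequence of values in $\{0,\dots,n-1\}$. Second, condition (2) reads off the real number attached to the top block: for positive $g$ with $\mathrm{top}(g)=m$, the top coordinate of $\sigma(g)-q\,g$ is $(\sigma-q)(g_m)$, whose sign in $H_m$ is governed by $\rho_m$, so $q_ia_i<\sigma(a_i)<r_ia_i$ forces $\rho_{m_i}\in(q_i,r_i)$; by the disjoint-or-equal choice of intervals this yields $\rho_{m_i}=\rho_i$ as real numbers. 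Third, conditions (3)--(5) upgrade this to a match of the full div-MODAG type of $H_{m_i}$ with that of $H_i$. Granting this, the proof closes cleanly: $\rho_{m_i}=\rho_i$ together with the type match gives $H_{m_i}\equiv H_i$, and if the nondecreasing sequence $(m_i)$ were not the identity we would have $m_i=m_{i+1}$ for some $i$, whence $H_i\equiv H_{m_i}=H_{m_{i+1}}\equiv H_{i+1}$, contradicting that $G$ is reduced. Therefore $m_i=i$ for all $i$.

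The step I expect to be the main obstacle is precisely the ``third fact'': extracting the algebraic type of the top block $H_{m_i}$ from the single (in)equality imposed by (3)--(5). The difficulty is that $a_i$ need not be supported on its top coordinate alone, so $L(a_i)$ receives contributions from lower coordinates, and in principle a lower block could supply the sign demanded by (4) or (5) even when $H_{m_i}$ is of equality type. The delicate configuration is a pair of adjacent summands sharing the same real $\rho$ but of different algebraic type (for instance ``$=\rho$'' next to ``$<\rho$''), which the reduced hypothesis permits and which condition (2) cannot separate. To handle it I would compute $L(a_i)$ coordinatewise and locate its top nonzero coordinate using Axiom OM within each $H_k$: the equality case (3) is robust, since $L(a_i)=0$ forces the top coordinate $L((a_i)_{m_i})$ to vanish and hence $H_{m_i}$ to be of equality type. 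The genuinely subtle cases are (4) and (5), where one must rule out that the required strict sign of $L(a_i)$ is produced below the top coordinate; pinning this down — presumably by arranging that the sign of $L$ be witnessed at the top coordinate rather than merely in the aggregate — is where the real work of the lemma lies.
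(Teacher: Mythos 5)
The paper itself contains no proof of this lemma: it is introduced as ``the following obvious result'' and the text moves straight on to the proof of Lemma 4.22, so your proposal can only be measured against the mathematics. Your treatment of (1) and (3) is correct, and your scaffolding for (2) --- tops are nondecreasing along a representative sequence, then match the type of the top block with that of $H_i$, then use reducedness to force the nondecreasing sequence of tops to be the identity --- is the natural strategy. One smaller inaccuracy first: your ``second fact'' deduces $\rho_{m_i}=\rho_i$ from equality of the intervals, but the paper's literal condition on the intervals (``the same or disjoint'') does not force distinct reals to receive disjoint intervals; two blocks with distinct transcendental $\rho$'s could both legally be assigned the interval $(1,2)$, and then part (2) fails outright. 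One has to read the construction as giving disjoint intervals to distinct values of $\rho$; with that reading, your second fact is essentially correct up to endpoint bookkeeping.

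The serious point is that the case you flag at the end is not merely ``where the real work lies'': it is a genuine counterexample to part (2) as stated, so no argument can close your gap without changing Definition 4.23 or adding a hypothesis. Take $n=2$, let $H_0$ be the additive group of $\Q(\delta)$ ($\delta$ a positive infinitesimal) with $\sigma_0(x)=(2-\delta)x$, so $H_0$ is a div-MODAG of type ``$<2$'', and let $H_1=(\Q,+,-,<)$ with $\sigma_1(y)=2y$, of type ``$=2$''. Then $G=H_0\oplus H_1$ is reduced, both intervals must coincide (both contain $2$), say $(q,r)=(1,3)$, and $L=\sigma-2$ is the relevant minimal polynomial. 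Put $a_0=(x,y)$ with $x>0$, $y>0$, and $a_1=(0,2y)$. Then $0<a_0<a_1$, since both inequalities are decided at the top coordinate; condition (2) of Definition 4.23 holds for both elements, since the top coordinates of $\sigma(a_i)-q\,a_i$ and of $r\,a_i-\sigma(a_i)$ lie in the ``$=2$'' block and equal $y$ or $2y$ there; $L(a_1)=0$; and $L(a_0)=(-\delta x,\,0)<0$, because the $H_1$-coordinate of $L(a_0)$ vanishes identically and the sign drops down to coordinate $0$ --- exactly the leakage you describe. So $\langle a_0,a_1\rangle$ is a representative sequence, yet $a_0\notin U_1$. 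This also breaks the displayed equivalences in the paper's proof of Lemma 4.22 for this $G$: a positive element of $H_1$ lies below the $a_0$ of some representative sequence without being in $U_1$. The conclusion of Lemma 4.22 is still salvageable here --- $U_1$ is existentially definable as the range of $L$, and universally definable as the set of elements bounded in absolute value by every positive element of $\ker L$ --- but only by an argument that treats an ``$=\rho$'' block sitting above a ``$<\rho$'' or ``$>\rho$'' block with the same $\rho$ specially, not via representative sequences as defined. So your diagnosis of the obstruction is exactly right; the flaw is in the lemma, not in your inability to prove it.
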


We are now ready to prove Lemma 4.22.
\begin{proof}
For $\alpha = 0$, since $U_0 = \{0\}$, $U_0(x)$ holds for any $x\in G$ iff $x = 0$, which is a quantifier-free $\tau$-formula, and hence both existential and universal as well.

For $\alpha = n$, since $U_n = G$, $U_n(x)$ holds for any $x\in G$ iff $x = x$, which is again a quantifier-free $\tau$-formula, and hence both existential and universal as well.

So let $0 < \alpha < n$ and let $x\in G$ with $x > 0$. Then,
\begin{eqnarray*}
U_\alpha(x) \mbox{ holds } & \mbox{ iff } & \exists \mbox{ a representative sequence } \langle a_0, \ldots, a_{n-1}\rangle\mbox{ s.t. } x < a_{\alpha-1}\\
& \mbox{ iff } & \forall \mbox{ representative sequences } \langle a_0, \ldots, a_{n-1}\rangle\mbox{ we have } x < a_{\alpha}.
\end{eqnarray*}
Thus, each $U_\alpha$, $\alpha \le n$, is definable by a universal and an existential $\tau$-formula.
\end{proof}

Combining Remark 4.20 and Lemma 4.22 yields that if $G$ is a reduced $n$--sum, then $Th(G)$ is model complete in the language $\tau$. It is an easy observation that if $G = \bigoplus_{i < n}H_i$ is an $n$--sum with $H_j\equiv H_{j+1}$ for some $0 \le j < n - 1$, then $G$ is $\tau$-elementarily equivalent to $G'$, which is the $(n - 1)$--sum formed from $G$ by eliminating $H_{j+1}$. In particular, every $n$--sum is $\tau$-elementarily equivalent to some reduced $m$--sum. This completes the proof of Theorem 4.13.

As a corollary to this theorem, we get more examples of model complete groups with an automorphism. For example, the ordered abelian group $\Q\oplus\Q$, with automorphism $\sigma$ defined as $\sigma(a\oplus b) = 2a\oplus 3b$, is model complete in $\L_{OG, \sigma}$.

In the following subsection, we embark on an attempt to classify when an $\omega$-direct sum of div-MODAGs is model complete in $\L_{OG, \sigma}$.

\subsection{$\omega$--sums.} We begin this subsection with the following two definitions.
\begin{defn}
An {\em $\omega$-sum} is an $\L_{OG, \sigma}$-structure $G := \oplus_{i\in\omega}H_i$, where each $H_i$ is a div-MODAG. Note that this is a sum as opposed to a product, which means every element has finite support. The operations of $+, -, \sigma$ are all defined coordinatewise, and the linear ordering is reverse lexicographic, i.e., $g < 0$ if and only if $H_k\models g_k < 0$, where $g = \oplus_{i\in\omega}g_i$ and $k$ is maximal such that $g_k\not= 0$.

A {\em graded $\omega$-sum}, considered as a $\tau_\omega$-structure where $\tau_\omega := \L_{OG, \sigma}\cup\{U_i\mid i\in\omega\}$, is an $\omega$-sum expanded by interpreting each unary predicate symbol $U_i$ as $\oplus_{j< i}H_j$ (we interpret $U_0$ by $\{0\}$).
\end{defn}

\begin{defn}
Let $\M$ be a $\tau_\omega$-elementary extension of a graded $\omega$-sum $G$. An element $a\in\M$ is called {\em standard} if there is some $\ell\in\omega$ such that $\M\models U_\ell(a)$; otherwise $a$ is called {\em nonstandard}. We also define the {\em nonstandard part of $M$} as the collection of all nonstandard elements of $\M$, i.e., 
$$nonstandard(\M) = \M\setminus\{a\in\M\mid \M\models U_\ell(a) \mbox{ for some } \ell\in\omega\}.$$
\end{defn}

\begin{rmk}
Let $G = \oplus_{i\in\omega}H_i$ be a graded $\omega$-sum and $\M$ a $\tau_\omega$-elementary extension of $G$. Then for any $n > 0$, $U_n(G)$ is clearly a substructure of $U_n(\M)$. Since both $U_n(G)$ and $U_n(\M)$ are models of the theory of an $n$-sum, and the theory of an $n$-sum is model complete in the language $\tau_\omega$, it follows that $U_n(G)$ is in fact an elementary substructure of $U_n(\M)$ in the language $\tau_\omega$ (and hence in the language $\L_{OG, \sigma}$). In other words, the standard part of $\M$ behaves similarly as $G$. It is only the nonstandard part of $\M$ that is more interesting.
\end{rmk}

The main goal of this subsection is to show that if the theory of an $\omega$-sum is model complete, then it must have a unique type at infinity. In fact, the theory of a graded $\omega$-sum (and hence, the theory of an $\omega$-sum) is determined by its restriction to each of the finite $n$-sums along with its behavior at infinity. And if there is not a unique behavior at infinity, the existence of a model companion is ruled out. Our main theorem is the following, where we give a complete characterization of which $\omega$-sums have model complete first-order theories.
\begin{thm}
Let $G = \oplus_{i\in\omega}H_i$ be an $\omega$-sum, considered as a structure in the language $\L_{OG, \sigma}$. For each algebraic $\rho$, define
\begin{itemize}
\item[(i)] $Con^G_\rho := \{i\in\omega\mid H_i\models\forall x(\sigma(x) = \rho\cdot x)\}$
\item[(ii)] $Inc^G_\rho := \{i\in\omega\mid H_i\models\forall x>0(\sigma(x) > \rho\cdot x)\}$
\item[(iii)] $Dec^G_\rho := \{i\in\omega\mid H_i\models\forall x>0(\sigma(x) < \rho\cdot x)\}$
\end{itemize}
Then, $Th(G)$ is model complete if and only if for all algebraic $\rho$, one of the sets $Con^G_\rho$, $Inc^G_\rho$ or $Dec^G_\rho$ is cofinite.
\end{thm}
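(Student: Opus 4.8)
The plan is to prove both directions through the single principle that ``unique type at infinity'' is the statement that the sequence of species $(H_i)_{i\in\omega}$ converges, in the four-fold sense of Remark 4.11, to one limiting species $\rho_\infty$. First I would record the elementary translation: for a fixed algebraic $\rho$, the condition that one of $Con^G_\rho,Inc^G_\rho,Dec^G_\rho$ be cofinite says exactly that the values $\rho_i$ eventually stay on one side of $\rho$, or eventually equal $\rho$ with a fixed type. Quantifying over all algebraic $\rho$, the hypothesis of the theorem is therefore equivalent to the assertion that $(H_i)$ converges to a single species $\rho_\infty$; its negation is that the species sequence oscillates. The whole argument is organized around this dichotomy.

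For the forward direction I would argue the contrapositive by adapting Theorem 2.6 to $\L_{OG,\sigma}$. Suppose $Th(G)$ is model complete but some algebraic $\rho$ has none of the three sets cofinite, so the species oscillate. If the values $\rho_i$ themselves oscillate, I pick a rational $q$ strictly between two cluster values and distinct from every $\rho_i$; then $\varphi_q(x):=(\sigma(x)>q\cdot x)$ is a clean $\tau$-formula because $\sigma-q$ is nonzero on every coordinate, so for positive $x$ the truth of $\varphi_q(x)$ depends only on whether the top coordinate $\ell$ of $x$ has $\rho_\ell>q$. The convex $\sigma$-closed subgroups $U_\ell=\bigoplus_{j<\ell}H_j$ then yield genuinely $\varphi_q$-split $\sigma$-closed cuts at every downward crossing of $q$, of which there are infinitely many. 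If instead the values converge to $\rho_\infty$ but the type at $\rho_\infty$ oscillates, I use the minimal-polynomial operator $L=L_{\rho_\infty}(\sigma)$ of Definition 4.7; since $\rho_\infty$ is a simple root, $\operatorname{sign}L(\sigma)$ cleanly separates the $>$-, $=$-, $<$-types near $\rho_\infty$, the only defect being that $L$ vanishes on the $=\rho_\infty$ coordinates. Either way, once unboundedly many $\varphi$-split $\sigma$-closed cuts are produced, the mechanism of Theorem 2.6 — the formula $\theta(u,v)$, pigeonholing the quantifier-free types of the witnessing tuples, and swapping a tuple across a $\sigma$-closed cut via the group-language form of Remark 2.5 (valid since distinct level-blocks interact only through the reverse-lexicographic order) — delivers the contradiction.

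For the reverse direction I would turn convergence of species into model completeness. Assuming $(H_i)$ converges to $\rho_\infty$, the first step is that in any $\M\succeq G$ every nonstandard positive element has, for each algebraic $\rho$, a determined sign of $\sigma(x)-\rho\cdot x$, namely the eventual one, so the whole nonstandard part is a reverse-lexicographic sum of div-MODAGs all of species $\rho_\infty$ and is therefore itself a div-MODAG of species $\rho_\infty$. I would then establish model completeness by matching an arbitrary $\M\subseteq\mathcal N\models Th(G)$ in three registers: the standard parts are elementarily matched by Remark 4.26 together with Theorem 4.13; the nonstandard parts are div-MODAGs of the common species $\rho_\infty$ and are matched by the quantifier elimination of Theorem 4.10; and the two are glued along the reverse-lexicographic order. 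To make this uniform I would run the $\tau_\omega^*$-analogue of Corollary 4.19, where the unique type at infinity supplies exactly the ingredient that the distinguished operator $L^*$ provided in the finite induction, so that the $K$-/$R$-decomposition of Proposition 4.18 goes through, now by compactness over the infinitely many predicates $U_i$.

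The hardest point, and the place the two theorems of the paper meet, is the infinitesimal case of the necessity direction. There $L_{\rho_\infty}(\sigma)$ vanishes on the $Con^G_{\rho_\infty}$-coordinates, so the naive level-cuts $U_\ell$ need not be clean, and I expect to obtain the required unbounded family of $\varphi$-split $\sigma$-closed cuts only by descending to intra-level cuts at the infinitesimal scale: inside a $=\rho_\infty$ coordinate the same operator governs the lower coordinates self-similarly, and since the levels grow without bound these internal clean cuts are themselves unbounded in number. Making this self-similar descent precise, and verifying the group-language version of Remark 2.5 that licenses the swap, is the technical crux; it is also exactly the computation exhibiting how the Kikyo--Shelah obstruction of Theorem 2.6 disappears precisely when the species sequence converges, that is, when there is a unique type at infinity.
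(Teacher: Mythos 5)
The decisive gap is in your forward direction, at the step where you invoke ``the mechanism of Theorem 2.6 \dots via the group-language form of Remark 2.5 (valid since distinct level-blocks interact only through the reverse-lexicographic order).'' That parenthetical is false, and the whole adaptation collapses with it. In $\L_{OG,\sigma}$, elements on opposite sides of a $\sigma$-closed cut interact through addition, not only through the order: if $b$ lies above the cut at $U_\ell$ and $a\in U_\ell$ is positive, then $b$ and $b+a$ both lie above the cut, and the atomic formula $y_1-y_2=x_1$ holds of $(a;\,b+a,\,b)$ but fails for $(a';\,b+a,\,b)$ whenever $a'\neq a$, even though $a'=2a$ has the same quantifier-free type as $a$. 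Thus $\qftp(\bar a)$ and $\qftp(\bar b)$ do not determine $\qftp(\bar a\bar b)$, Remark 2.5 has no group-language analogue, and the pigeonhole-and-swap argument of Theorem 2.6 cannot be transported. This failure is precisely why the paper does \emph{not} argue by counting $\varphi$-split cuts: instead it builds machinery whose entire purpose is to control the additive cross-cut relations you are waving away --- unpacked atomic formulas (Definition 4.29), strong cuts (Definition 4.31), Lemma 4.33, in which the problematic category $y_i-y_j=x_k$ is handled by the hybrid elements $b^-c^+$, and the push-up Theorems 4.34 and 4.35 --- and then derives non-model-completeness from the failure of existential definability of the \emph{next $\rho$-class} relation $\next_\rho$ (Lemmas 4.39--4.41), in analogy with $Th(\omega,<,0)$, rather than from a split-cut bound.

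Relatedly, the case you defer as the ``technical crux'' --- oscillation of \emph{type} at a fixed algebraic value $\rho$, i.e.\ $Con^G_\rho$ infinite but not cofinite, or $Con^G_\rho$ finite with $Inc^G_\rho$ and $Dec^G_\rho$ both infinite --- is not a residual difficulty; it \emph{is} the theorem's necessity direction (Lemmas 4.40 and 4.41 cover exactly these two cases, and the value-oscillation picture you treat first is subsumed by them). Your proposed ``self-similar descent to intra-level cuts'' is not an argument, and I see no way to complete it without rebuilding what the paper builds: given model completeness, a quantifier-free $\theta(\bar e)$ from the atomic diagram implies $\next_\rho(a,a')$, and Theorem 4.35 (which rests on Lemma 4.33 and on the finite-support property of the $\omega$-sum used in Theorem 4.34) pushes the witnesses $a'',\bar d^*$ above $[a']_\rho$, contradicting nextness. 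By contrast, your reverse direction is essentially the paper's own (collapse to a finite $n$-sum when some $Con^G_\rho$ is cofinite; otherwise axiomatize $Th(G)$ by $\bigcup_n Th(G|_n)$ together with the sentences $\varphi_L$ fixing the unique behavior at infinity), so the proposal is salvageable only by replacing its forward half wholesale.
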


Thus, for example, $Th(\oplus_{i\in\omega}H_i)$, where $H_n$ is a model of $\MODDAG_{\rho_n}$ such that $\rho_n\cdot x = (n+1)x$ for each $n\ge 0$, is model complete in $\L_{OG, \sigma}$. On the other hand, the theory of $\Q_2\oplus\Q_3\oplus\Q_2\oplus\Q_3\oplus\cdots$(repeated $\omega$ times), where $\Q_n$ is $(\Q,+,-,<,\sigma_n)$ with $\sigma_n(x) = nx$, is not model complete. We will prove the theorem through a series of results and definitions, but at its core, the reason why certain $\omega$-sums are not model complete is akin to the non-model completeness of $Th(\omega,<,0)$. That is, the immediate successor relation is not existentially definable. 

\begin{defn}
An {\em unpacked atomic formula} in the language $\L_{OG, \sigma}$ has the form
\begin{tabular}{lllllll}
$x_i + x_j = x_k$, & $x_i - x_j = x_k$, & $\sigma(x_i) = x_j$, & $x_i = x_j$, & $x_i < x_j$, & $x_i = 0$.
\end{tabular}
\end{defn}

\begin{rmk}
It is easy to see that every atomic formula in $\L_{OG, \sigma}$ can be written as a conjunction of unpacked atomic formulas in $\L_{OG, \sigma}$ by introducing more variables. Since an atomic formula in $\L_{OG, \sigma}$ is a linear difference equation $L(\bar{x}) = 0$ or inequation $L(\bar{x}) < 0$ (or $L(\bar{x}) > 0$) in many variables, it is enough to consider only such formulas. For example, an unpacked form of $\sigma^2(x_1) - 2\sigma(x_2) + 3x_3 = 0$ is
\begin{eqnarray*}
(x_4 = \sigma(x_1))\wedge(x_5 = \sigma(x_4))\wedge(x_6 = \sigma(x_2))\wedge(x_7 = x_6 + x_6)\wedge(x_8 = x_3 + x_3)\wedge\\
(x_9 = x_8 + x_3)\wedge(x_{10} = x_5 - x_7)\wedge(x_{11} = x_{10} + x_9)\wedge(x_{11} = 0).
\end{eqnarray*}
More precisely, for any atomic formula $\alpha(\bar{x})$ in $\L_{OG, \sigma}$, there is a conjunction $\beta(\bar{x},\bar{z})$ of unpacked atomic formulas in $\L_{OG, \sigma}$ such that
$$\vdash\forall\bar{x}[\alpha(\bar{x})\leftrightarrow\exists\bar{z}\beta(\bar{x},\bar{z})].$$
It follows that any quantifier-free $\L_{OG,\sigma}$-formula is equivalent to a (positive) boolean combination of unpacked atomic formulas in $\L_{OG, \sigma}$ in possibly more variables. 
\end{rmk}

Now we introduce a notation that is needed to prove the lemma that follows.
\begin{notation}
Fix an $\omega$-sum $G$ and $\ell\in\omega$.\newline
For an element $b$ in $G$, let $b^-$ and $b^+$ be the elements of $G$ satisfying respectively
\begin{eqnarray*}
(b^-)_i & = & \left\{\begin{tabular}{ll}$(b)_i$ & if $i<\ell$\\0 & if $i\ge\ell$\end{tabular}\right.\\
(b^+)_i & = & \left\{\begin{tabular}{ll}0 & if $i<\ell$\\$(b)_i$ & if $i\ge\ell$\end{tabular}\right.
\end{eqnarray*}
For elements $b$ and $c$ in $G$, let $b^-c^+$ be the hybrid
$$b^-c^+ := b^- + c^+ = \left\{\begin{tabular}{ll}$b_i$ & if $i<\ell$\\$c_i$ & if $i\ge\ell$\end{tabular}\right.$$
For tuples $\bar{b} = (b_1, \ldots, b_n)$ and $\bar{c} = (c_1, \ldots, c_n)$ from $G$ of the same length, let $\overline{b^-c^+}$ denote the hybrid tuple $\overline(b_1^-c_1^+, \ldots, b_n^-c_n^+)$; and let $\dfrac{1}{2}\bar{b}$ denote $(\dfrac{1}{2}b_1, \ldots, \dfrac{1}{2}b_n)$.\newline
Also note that if $c> U_\ell$, then $b^-c^+ > \dfrac{1}{2}c$ for any $b\in G$.\newline
For a tuple $\bar{y} = (y_1, \ldots, y_n)$, we write $\bar{y} > x$ and $\bar{y} > U_\ell$ to respectively mean $y_i > x$ and $y_i > U_\ell$ for each $i = 1, \ldots, n$.
\end{notation}

\begin{defn}
Let $\M$ be an elementary extension of an $\omega$-sum $G$, and let $M_{\ge 0} :=\{m\in M\mid m\ge 0\}$. A cut $C = (A, B)$ of $\M_{\ge 0}$ is said to be {\em strong} if there is a different rate of growth across the cut, i.e., there is some $\rho$ such that 
\begin{eqnarray*}
& \forall a_1\in A\;\exists a_2\in A(a_1 < a_2 \;\wedge\; \sigma(a_2) \le \rho\cdot a_2) & \mbox{ and }\\
& \forall b_1\in B\;\exists b_2\in B(b_2 < b_1 \;\wedge\; \sigma(b_2) > \rho\cdot b_2) &
\end{eqnarray*}
or vice versa. In particular, if $G$ is a reduced $\omega$-sum and $C = (A, B)$ is a strong cut of $G_{\ge 0}$, then $A = U_\ell\cap G_{\ge 0}$ for some $\ell\in\omega$.
\end{defn}

Suppose $0 < a^*$ is some distinguished element of an $\omega$-sum $G$, and the formula $\varphi(x, a^*)$ defines a strong cut of $G_{\ge 0}$. Then, for a conjunction $\gamma(\bar{x},\bar{y})$ of unpacked atomic formulas, call a tuple $\bar{a}\subseteq G$ containing $a^*$ {\em good for $\gamma$ (with respect to $\varphi$}) if:
\begin{itemize}
\item[1.] $\bar{a}\subseteq\varphi(G,a^*)$
\item[2.] $(\exists\bar{b} > \varphi(G, a^*))\gamma(\bar{a},\bar{b})$.
\end{itemize}
This is clearly a first-order condition on $\bar{a}$. We then have the following result.

\begin{lem}
Fix $G$, $a^*$, $\varphi(x, a^*)$ and $\gamma(\bar{x},\bar{y})$ as above. Then there is a formula $\gamma^*(\bar{w})$, with length$(\bar{w}) = $ length$(\bar{y})$, such that $G$ satisfies the following two sentences: 
\begin{itemize}
\item[(a)] $(\forall\bar{a} \mbox{ good for }\gamma)(\forall\bar{b} > \varphi(G, a^*))$
$[\gamma(\bar{a},\bar{b}) \to\exists \bar{w}((\bar{w} > (1/2)\bar{b}) \wedge\gamma^*(\bar{w}))]$, and
\item[(b)] $(\forall \bar{a} \mbox{ good for }\gamma)(\forall\bar{w} > \varphi(G, a^*))$
$[\gamma^*(\bar{w}) \to \exists\bar{b}((\bar{b} > (1/2)\bar{w}) \wedge\gamma(\bar{a}, \bar{b}))]$
\end{itemize}
\end{lem}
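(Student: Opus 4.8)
The plan is to exploit the direct-sum decomposition of $G$ across the strong cut. Since the cut is strong and (after reducing) $G$ is an $\omega$-sum, its lower part is $\varphi(G,a^*) = U_\ell\cap G_{\ge 0}$ for some $\ell$, so I may write $G = U_\ell \oplus U_\ell^c$ with $U_\ell^c := \oplus_{i\ge\ell}H_i$; for $g\in G$ these two summands are exactly the $g^-$ and $g^+$ of the preceding notation. Two features will carry the argument: (i) the operations $+,-,\sigma$ act coordinatewise, so $g\mapsto g^-$ and $g\mapsto g^+$ are $\Z[\sigma]$-module homomorphisms, whence every equational atom holds in $G$ iff it holds on each summand; and (ii) the reverse-lexicographic order is dominated by the high summand $U_\ell^c$, which is precisely the content of the observation preceding this lemma that $b^-c^+ > \frac12 c$ whenever $c > U_\ell$. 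Feature (i) lets me split each atom into independent ``low'' and ``high'' content, and feature (ii) is what makes the $\frac12$-inequalities in (a) and (b) automatic once high parts are matched.

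Concretely, I would first use the reduction of quantifier-free formulas to boolean combinations of unpacked atomic formulas to take $\gamma$ to be a conjunction of such atoms, and then split each atom under the decomposition. Since every $a_i\in\bar a$ lies in $U_\ell$ we have $a_i^+=0$, so the high content of an equational atom involves only the $\bar y$-variables; collecting these, with $\bar w$ standing for the high parts $\bar b^+$, gives the equational part of $\gamma^*(\bar w)$, while the low contents (which couple $\bar a$ with $\bar b^-$) are dropped but will be recovered in (b) from a goodness-witness. Mixed order atoms like $x_i<y_j$ are automatically true above the cut and contribute nothing, while atoms forcing a $\bar y$-variable onto or below the cut, e.g. $\sigma(x_i)=y_j$, $y_j<x_i$, or $y_j=0$, are incompatible with $\bar b>U_\ell$ and render the relevant premise vacuous, so $\gamma^*$ records them as $\bot$.

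With $\gamma^*$ in hand, I would prove (a) by taking $\bar w:=\bar b^+$ (after the order-adjustment described below): the equational high contents hold by the coordinatewise splitting of $\gamma(\bar a,\bar b)$, and $\bar w>\frac12\bar b$ by feature (ii). I would prove (b) by the hybrid reconstruction $b_j := (b_j^0)^- + w_j^+$, where $\bar b^0>U_\ell$ witnesses the goodness of $\bar a$: the low contents then hold because $\bar b^-$ agrees with the witness (whose low parts already satisfy them with $\bar a$); the equational high contents hold because $\bar b^+=\bar w^+$ and the homogeneous equational part of $\gamma^*(\bar w)$ passes to the $+$-projection; and $\bar b>\frac12\bar w$ again by feature (ii).

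The step I expect to be the main obstacle is the order atoms $y_i<y_j$, for which the reverse-lexicographic order may be decided at the high level or, in case of a high-level tie, at the low level, so $<$ does not split as cleanly as equations do. To handle this I would, by a purely syntactic (hence $\bar a$-independent) analysis of the equational part of $\gamma$, decide for each such atom whether the high-coordinate equational subsystem forces $w_i^+=w_j^+$: if it does, the ordering must be witnessed at the low level, so I record $w_i=w_j$ in $\gamma^*$ and rely on the witness low parts, which necessarily satisfy $(b_i^0)^-<(b_j^0)^-$ in this forced-tie case, to break the tie; if it does not, I record the strictly-high inequality $w_j-w_i>U_\ell$, which forces $b_i^+<b_j^+$ in the reconstruction and is realizable in the divisible summand $U_\ell^c$. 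Verifying that these choices are simultaneously satisfiable, in (a) by perturbing and rescaling $\bar w$ within $U_\ell^c$ and in (b) by the witness together with divisibility of $U_\ell$, is the technical heart of the argument.
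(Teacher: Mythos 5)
Your overall strategy is the same as the paper's: split each unpacked atom across the strong cut using the coordinatewise decomposition $G=U_\ell\oplus\bigoplus_{i\ge\ell}H_i$, witness (a) by high-part projections, and witness (b) by hybrids whose low parts come from a goodness witness $\bar b^0$ and whose high parts come from $\bar w$. Moreover, you have correctly put your finger on the crux, which the paper's own proof treats very lightly: the paper translates atoms one at a time ($y_i<y_j$ becomes $w_i<w_j$, while $y_i-y_j=x_k$ becomes $w_i=w_j$), using \emph{different} witnesses for different categories, and then asserts the conjunction case follows ``since we have given explicit formulas''; but these per-atom translations are not jointly usable --- already for $\gamma=(y_1<y_2)\wedge(y_2-y_1=x_1)$ they yield the unsatisfiable $w_1<w_2\wedge w_2=w_1$, although $\gamma$ has solutions above the cut. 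So your insistence on deciding, per order atom, whether the high parts are forced to tie is exactly the right correction, and your treatment of the forced case (the witness's low parts are then ordered, by reverse-lexicography) is sound.

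The genuine gap is that your test for ``forced'' is too weak, and the resulting step fails. You decide ties by asking whether the \emph{equational} subsystem forces $w_i^+=w_j^+$, but ties can also be forced by order atoms together with convexity of $U_\ell$. Take $\gamma:=(y_1<y_2)\wedge(y_2<y_3)\wedge(y_1+x_1=y_3)$, the unpacked form of $y_1<y_2<y_1+x_1$. In every admissible solution, $0<b_2-b_1<a_1\in U_\ell$ and $0<b_3-b_2<a_1$, so by convexity of $U_\ell$ all three high parts are equal: both order atoms are tie-forced. Yet the high projection of the equational subsystem is just $w_1=w_3$, which forces neither tie, so your recipe records $w_2-w_1>U_\ell$, $w_3-w_2>U_\ell$, and $w_1=w_3$, which are jointly unsatisfiable; hence part (a) fails for this $\gamma$ --- no amount of perturbing and rescaling can help, since $\gamma^*$ itself is inconsistent. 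The repair is to define ``forced'' semantically (tied in every solution, over all good tuples), or syntactically by a closure that propagates ties along chains of order atoms through $U_\ell$-translates; for part (a) one then needs a single solution that is strict at \emph{all} unforced pairs simultaneously, which you can build by summing finitely many solutions (the atoms are linear, non-strict inequalities add, and one strict summand makes the sum strict). A secondary problem: your clause ``$w_j-w_i>U_\ell$'' is not an $\L_{OG,\sigma}$-formula without the cut as a parameter; in the intended application (Theorem 4.35) the cut is defined from a nonstandard parameter and $\gamma^*$ is fed into Theorem 4.34, whose proof transfers statements down to $G$ by elementarity, so $\gamma^*$ must not mention the cut at all. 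This too is avoidable: record plain $w_i<w_j$ at unforced pairs, and secure strictness of the high parts inside the reconstruction for (b) by adding that same summed ``generic'' solution to the high coordinates. With these two repairs your plan goes through, and indeed yields a more complete argument than the one printed in the paper.
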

\begin{proof}
Let $C = (A, B)$ be the cut defined by $\varphi(x, a^*)$. Since this is a strong cut, $A = U_\ell\cap G_{\ge 0}$ for some $\ell\in\omega$.

We first deal with the case when $\gamma(\bar{x}, \bar{y})$ is a single unpacked formula. The proof involves looking carefully at the sequence of coordinates and figuring out which relations can hold between $\bar{a}$ and $\bar{b}$ if it is given that $0 \le \bar{a}\subseteq U_\ell$ and $\bar{b}>U_\ell$. For example, it cannot happen that $a_i + a_j = b_k$, $a_i - a_j = b_k$, $b_i + b_j = a_k$, $\sigma(b_k) = a_i$, $\sigma(a_i) = b_k$, $b_k = 0$, or $b_k = a_i$ for any $i, j, k$. Similarly, $a_i < b_j$ will always be true for all $i, j$. So the only interesting unpacked atomic formulas $\alpha(\bar{x}, \bar{y})$ that matter are one of the following 3 categories:
\begin{itemize}
\item $x_i + x_j = x_k$, $x_i - x_j = x_k$, $\sigma(x_i) = x_j$, $x_i = x_j$, $x_i < x_j$, $x_i = 0$
\item $y_i + y_j = y_k$, $y_i - y_j = y_k$, $\sigma(y_i) = y_j$, $y_i = y_j$, $y_i < y_j$
\item $y_i - y_j = x_k$
\end{itemize}

The first categories of formulas are trivial in $\bar{y}$. So, let $\gamma^*(\bar{w}) := w_1 = w_1$, and choose the witnesses $\bar{w} = \bar{b}$ for part (a), and $\bar{b} = \bar{w}$ for part (b).

The second categories of formulas are trivial in $\bar{x}$. So, let $\gamma^*(\bar{w})$ be the formulas $w_i + w_j = w_k$, $w_i - w_j = w_k$, $\sigma(w_i) = w_j$, $w_i = w_j$ and $w_i < w_j$ respectively, and choose the witnesses $\bar{w} = \bar{b}$ for part (a), and $\bar{b} = \bar{w}$ for part (b).

Finally, for the third category, let $\gamma^*(\bar{w})$ be the formula $w_i = w_j$ and choose the witnesses $\bar{w} = \overline{0^-b^+}$ for part (a), and $\bar{b} = \overline{b_0^-w^+}$ for part (b) where $\overline{b_0}$ is an witness to $\bar{a}$ being good for $\gamma(\bar{x},\bar{y})$.

Since we have given explicit formulas, we get precisely the same statement if $\gamma(\bar{x}, \bar{y})$ is now a conjunction of unpacked atomic formulas instead.
\end{proof}

Let us now make an easy but very useful observation about $\omega$-sums.
\begin{thm}
Let $\M$ be an elementary extension of an $\omega$-sum $G$. Suppose $nonstandard(\M)\not=\emptyset$ and $0 < \bar{d}\in\M$ is nonstandard. Suppose also that $\gamma(\bar{x})$ is a conjunction of unpacked atomic formulas such that $\M\models\gamma(\bar{d})$. Then
$$\M\models\forall y\exists\bar{z}>y\;\gamma(\bar{z}).$$
\end{thm}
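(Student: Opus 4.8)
The plan is to prove the displayed sentence directly, exploiting that every unpacked atomic formula is \emph{homogeneous}. First I would record the algebraic shape of the solution set. Since $0<\bar d$, no conjunct of $\gamma$ can be of the form $x_i=0$; thus I split $\gamma$ into its equational part (the conjuncts $x_i+x_j=x_k$, $x_i-x_j=x_k$, $\sigma(x_i)=x_j$, $x_i=x_j$) and its strict part (the conjuncts $x_i<x_j$). Writing $S$ for the set of positive tuples satisfying $\gamma$, the equational part cuts out a $\Q$-subspace that is invariant under the diagonal action of $\sigma^{\pm1}$, while the strict part together with positivity cuts out a convex cone. Because $\sigma$ is a group automorphism of a divisible group it commutes with multiplication by rationals, and because it preserves $<$, the set $S$ is closed under (i) multiplication by positive rationals, (ii) coordinatewise application of $\sigma$ and $\sigma^{-1}$, and (iii) addition. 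These three closure properties are the engine of the argument.

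Next I would isolate the role of the reverse-lexicographic order. The key elementary fact is a \emph{domination lemma}: if $x>0$ and $x\notin U_\ell$, then $x$ exceeds every element of $U_\ell$, since the top nonzero coordinate of such an $x$ sits at an index $\ge\ell$ and is positive, so $x-u>0$ for every $u\in U_\ell$. Consequently each nonstandard coordinate of $\bar d$ dominates all of $U_\ell$, for every $\ell$. Because ``$x>0\wedge\forall u\,(U_\ell(u)\to u<x)$'' is first order, the existence of the nonstandard witness $\bar d$ yields, for every $\ell$, that $\M$ (hence, by elementarity, $G$ and every elementary extension) has a positive solution of $\gamma$ whose nonstandard coordinates dominate $U_\ell$. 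In other words, the rate configuration demanded by $\gamma$ recurs at cofinally high grades; this overspill is exactly what the nonstandard hypothesis buys us.

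Then, given an arbitrary $y$, I would assemble the required $\bar z>y$. Coordinates of $\bar d$ that are linked to a nonstandard coordinate through the equational part are themselves nonstandard, and coordinates appearing only in strict inequalities may be enlarged freely; using closure of $S$ under addition one reduces to the case in which every coordinate is nonstandard. For such a solution, multiplication by a large positive rational moves each coordinate cofinally \emph{within} its archimedean class, while the domination lemma, combined with the cofinally high solutions produced above and the hybrid decomposition $b^-c^+$ of Notation 4.25, lets one place a copy of the configuration \emph{above} the archimedean class of $y$. Adding these two solutions (legitimate since $S$ is additive) produces $\bar z\in S$ with every coordinate exceeding $y$.

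The main obstacle is precisely the step of making \emph{every} coordinate exceed an \emph{arbitrary} $y$, which may lie in a strictly higher archimedean class than any coordinate of $\bar d$: neither rational scaling nor iteration of $\sigma$ can by itself cross archimedean classes when the relevant rate $\rho$ is a finite real. The resolution is that crossing classes is achieved by moving support upward in the grading, and the nonstandard witness guarantees, via the domination lemma and overspill, that the finite rate-pattern of $\bar d$ is available at cofinally high coordinates. The care needed is in preserving the strict inequalities $x_i<x_j$ under this upward placement---truncation to high coordinates can collapse a strict inequality to an equality---which is why one works with the hybrid tuples of Notation 4.25 and with the additive structure of $S$ rather than with naive truncations. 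This is the same phenomenon that Theorem 2.6 isolates, now appearing in the graded setting.
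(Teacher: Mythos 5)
Your second paragraph contains the step that actually drives the paper's proof: each sentence $\exists\bar{z}\,(\bar{z}>U_\ell\wedge\gamma(\bar{z}))$ is first order, is witnessed in $\M$ by $\bar{d}$ (every positive element outside $U_\ell$ dominates $U_\ell$), and therefore transfers down to $G$ by elementarity. But what you do next opens a genuine gap. You fix an arbitrary $y\in\M$ and try to manufacture a witness $\bar{z}>y$ \emph{inside} $\M$ from the objects you have named: $\bar{d}$, solutions above each $U_\ell$, and the closure of the solution set under addition, positive rational scaling, and $\sigma^{\pm1}$. This cannot succeed in general. The only solutions your lemmas actually provide are $\bar{d}$ and solutions lying in $G$ (positing solutions of $\gamma$ in $\M$ above arbitrary elements of $\M$ is circular --- that is the theorem itself). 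Hence everything you can construct lies in the divisible $\sigma^{\pm1}$-invariant subgroup $D\subseteq\M$ generated by $G$ together with the entries of $\bar{d}$, and $D$ is closed under all of your operations. Since the theorem is asserted for \emph{every} elementary extension $\M$ with a nonstandard element, your argument must in particular handle a sufficiently saturated one, and there the type $\{y>a\mid a\in D\}$ is realized (it is finitely satisfiable because the order is unbounded); for such $y$ your assembly has nothing to place above $y$. Your phrase ``the rate-pattern of $\bar{d}$ is available at cofinally high coordinates'' conceals exactly this: above every $U_\ell$ is not the same as cofinal in $\M$, and bridging that difference is the whole content of the theorem --- nor is there any legitimate overspill principle available, since the $U_\ell$ are infinitely many separate unary predicates, not an internally indexed family. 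A second, smaller defect: the hybrid decomposition $b^-c^+$ of Notation 4.31 (not 4.25) is defined coordinatewise, so it makes sense only for elements of the actual $\omega$-sum $G$; nonstandard elements of $\M$ have no support to ``move upward,'' so those manipulations are unavailable exactly where your assembly needs them.

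The repair is the paper's two-line finish, and you already hold all the ingredients. Having transferred $\exists\bar{z}\,(\bar{z}>U_\ell\wedge\gamma(\bar{z}))$ into $G$ for every $\ell$, observe that in $G$ every element is standard, i.e., lies in some $U_\ell$; hence $G\models\forall y\,\exists\bar{z}>y\;\gamma(\bar{z})$ outright. This is a single first-order sentence, so elementarity carries it back up to $\M$ with the universal quantifier intact. The moral is that the quantifier over $y$ must travel \emph{inside} the transferred sentence; it cannot be reconstituted in $\M$ from its instances over the $U_\ell$, which is what your construction attempts. (The closure properties in your first paragraph are correct but become unnecessary once the transfer is done in this order.)
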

\begin{proof}
For each $\ell\in\omega$, 
$$\M\models\exists\bar{z}>U_\ell\;\gamma(\bar{z}),$$
witnessed by $\bar{d}$. Since $G\preceq\M$, it follows that for every $\ell\in\omega$,
$$G\models\exists\bar{z}>U_\ell\;\gamma(\bar{z}).$$
But $G$ is an $\omega$-sum. In particular, $G$ has no nonstandard elements. Thus,
$$G\models\forall y\exists\bar{z}>y\;\gamma(\bar{z}).$$
By elementarity,
$$\M\models\forall y\exists\bar{z}>y\;\gamma(\bar{z}).$$
\end{proof}

Now, we use Lemma 4.33 to prove a parametric version of Theorem 4.34.
\begin{thm}
Let $G$ be an $\omega$-sum and $G\preceq\M$ with $nonstandard(\M)\not=\emptyset$. Let $0 < c^*\in\M$ be nonstandard and let $\varphi(x, c^*)$ define a strong cut of $\M_{\ge 0}$. Assume $\M\models\theta(\bar{c}, \bar{d})$ where $\theta(\bar{x}, \bar{y})$ is a quantifier-free formula, $\bar{c}\subseteq\varphi(\M, c^*)$ with $c^*\in\bar{c}$, and $\bar{d}$ is nonstandard with $\bar{d} > \varphi(M, c^*)$. Then,
$$\M\models\forall u\exists\bar{b}>u\;\theta(\bar{c},\bar{b}).$$
\end{thm}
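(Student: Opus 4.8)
The plan is to reduce the general quantifier-free formula $\theta(\bar x,\bar y)$ to the case of a conjunction of unpacked atomic formulas, and then to pump up a single witness $\bar d$ into an unbounded family of witnesses using Lemma~4.33 and the behavior-at-infinity argument of Theorem~4.34. First I would use Remark~4.31 to write $\theta(\bar x,\bar y)$ as a disjunction of conjunctions of unpacked atomic formulas (negations of atomic formulas can be absorbed: $x_i\neq x_j$ becomes $x_i<x_j\vee x_j<x_i$, and similarly for the other atoms, using that the ordering is linear). Since $\M\models\theta(\bar c,\bar d)$, at least one disjunct $\gamma(\bar x,\bar y)$ holds, so $\M\models\gamma(\bar c,\bar d)$. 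As $\forall u\,\exists\bar b\,(\gamma(\bar c,\bar b)\wedge\bar b>u)$ implies $\forall u\,\exists\bar b\,(\theta(\bar c,\bar b)\wedge\bar b>u)$, it suffices to prove the statement for this single conjunction $\gamma$.

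Next I would invoke Lemma~4.33 with this $\gamma$ and the strong cut defined by $\varphi(x,c^*)$. The hypotheses match: $\bar c\subseteq\varphi(\M,c^*)$ with $c^*\in\bar c$, so $\bar c$ is good for $\gamma$ (condition~1 holds by assumption, and condition~2 is witnessed by $\bar d>\varphi(\M,c^*)$). Lemma~4.33 produces a formula $\gamma^*(\bar w)$ in the single block of variables $\bar w$ with the transfer properties (a) and (b). The point of $\gamma^*$ is that it no longer refers to the parameters $\bar c$ at all — it is a pure statement about a tuple lying above the cut — so it is amenable to the parameter-free growth argument. From part~(a) applied to $\bar a=\bar c$ and $\bar b=\bar d$, I obtain a nonstandard tuple $\bar w_0>(1/2)\bar d$ with $\M\models\gamma^*(\bar w_0)$; since $\bar d$ is nonstandard, $\bar w_0$ is nonstandard as well.

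The heart of the argument is then Theorem~4.34 applied to $\gamma^*$. Because $\gamma^*(\bar w)$ is a conjunction of unpacked atomic formulas (this is what Lemma~4.33 delivers) and $\bar w_0$ is a nonstandard tuple satisfying it, Theorem~4.34 gives
\[
\M\models\forall u\,\exists\bar w>u\;\gamma^*(\bar w).
\]
Finally I would feed these unbounded $\gamma^*$-witnesses back through part~(b) of Lemma~4.33 to recover witnesses for $\gamma(\bar c,\cdot)$: given any bound $u$, first choose $\bar w>\max(u,\varphi(\M,c^*))$ with $\gamma^*(\bar w)$, which is possible by the displayed sentence; then part~(b), with $\bar a=\bar c$, yields $\bar b>(1/2)\bar w>u$ with $\M\models\gamma(\bar c,\bar b)$. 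Hence $\M\models\forall u\,\exists\bar b>u\;\gamma(\bar c,\bar b)$, and by the reduction in the first paragraph this proves $\M\models\forall u\,\exists\bar b>u\;\theta(\bar c,\bar b)$.

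I expect the main obstacle to be the bookkeeping at the interface between the two blocks of variables: Lemma~4.33 is stated for $\bar a$ good for $\gamma$ and $\bar b>\varphi(\M,c^*)$, whereas Theorem~4.34 speaks of a single nonstandard tuple, so one must be careful that the tuples $\bar w_0$ and the later $\bar w$ genuinely lie above the cut $\varphi(\M,c^*)$ (not merely above $(1/2)\bar d$) so that part~(b) is applicable — this is where the remark ``if $c>U_\ell$ then $b^-c^+>\tfrac12 c$'' and the strongness of the cut (giving $A=U_\ell\cap\M_{\ge0}$) are used. The other delicate point is verifying that nonstandardness is preserved at each step, since Theorem~4.34 requires a nonstandard seed; this follows because each new tuple dominates half of a nonstandard tuple and the cut $\varphi(\M,c^*)$ lies strictly above all standard elements. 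Once these containments are tracked correctly, the chain (a)$\to$Theorem~4.34$\to$(b) closes the argument.
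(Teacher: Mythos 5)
Your overall skeleton --- pass to a conjunction of unpacked atomic formulas, apply Lemma 4.33(a) to get a $\gamma^*$-witness above the cut, pump it with Theorem 4.34, then pull back through Lemma 4.33(b) --- is exactly the structure of the paper's proof, and your endgame (including the nonstandardness bookkeeping) is essentially right. But your opening reduction contains a genuine gap: a quantifier-free $\L_{OG,\sigma}$-formula is \emph{not} equivalent to a disjunction of conjunctions of unpacked atomic formulas \emph{in the same variables}. Unpacked atomic formulas relate variables by a single application of $+$, $-$ or $\sigma$, so even $\sigma(\sigma(x))=y$ has no such representation: in $\Q_2\oplus\Q_3$ with $x=(1,0)$, the pairs $(x,(4,0))$ and $(x,(5,0))$ satisfy exactly the same unpacked atomic formulas in the two variables $x,y$, yet they disagree on $\sigma^2(x)=y$. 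What Remark 4.30 actually provides is $\theta(\bar x,\bar y)\leftrightarrow\exists\bar z\,\beta(\bar x,\bar y,\bar z)$, where $\beta$ is a positive boolean combination of unpacked atomics in \emph{additional} variables $\bar z$; absorbing negations is fine, but the auxiliary variables cannot be avoided, and handling them is a substantive step of the proof rather than deferrable bookkeeping, because Lemma 4.33 and Theorem 4.34 apply only to conjunctions of unpacked atomics whose variables are cleanly sorted relative to the cut.

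The missing step, which is precisely what the paper does, is: take a disjunct $\beta_i$ of the DNF of $\beta$ and a witness $\bar e$ for $\bar z$ with $\M\models\beta_i(\bar c,\bar d,\bar e)$; arrange $\bar e\ge 0$; split $\bar e$ into $\bar e_1\subseteq\varphi(\M,c^*)$ and $\bar e_2>\varphi(\M,c^*)$; and run your chain (a) $\to$ 4.34 $\to$ (b) for the augmented tuples $\bar c'=\bar c\,\bar e_1$ and $\bar d'=\bar d\,\bar e_2$. One must re-check here that $\bar c'$ is good for $\beta_i$ (it is, witnessed by $\bar d'$) and that $\bar d'$ is nonstandard (it is, since every element above the cut exceeds the nonstandard $c^*$ and the $U_\ell$ are convex). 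The chain yields $\M\models\forall u\exists\bar b'>u\;\beta_i(\bar c',\bar b')$, and inverting the splitting gives $\M\models\forall u\exists\bar b>u\,\exists\bar z\;\beta_i(\bar c,\bar b,\bar z)$, hence the claim for $\theta$. (Two small further slips: the unpacking remark is 4.30, not 4.31; and at the end you need $\bar w>2u$, not $\bar w>u$, to conclude $\bar b>(1/2)\bar w>u$.) With the auxiliary-variable splitting inserted, your argument coincides with the paper's.
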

\begin{proof}
By Remark 4.30, there is a positive boolean combination $\beta(\bar{x}, \bar{y}, \bar{z})$ of unpacked atomic formulas such that
$$\vdash\theta(\bar{x},\bar{y})\leftrightarrow\exists\bar{z}\beta(\bar{x}, \bar{y}, \bar{z}).$$

Write $\beta(\bar{x},\bar{y},\bar{z})$ as a DNF:
$$\beta(\bar{x},\bar{y},\bar{z}) = \bigvee_{i\le n} \beta_i(\bar{x},\bar{y},\bar{z}),$$
where each $\beta_i(\bar{x},\bar{y},\bar{z})$ is a conjunction of unpacked atomic formulas. Since existential quantification commutes with disjunction, we get that there is some $i\le n$ such that $\M\models\exists\bar{z}\beta_i(\bar{c},\bar{d},\bar{z})$. Let $\bar{e}$ from $\M$ witness this. Thus, $\M\models\beta_i(\bar{c},\bar{d},\bar{e})$. 

By modifying the unpacked atomic formulas appropriately we can assume without loss of generality that $\bar{e} \ge 0$. Split $\bar{e}$ into $\bar{e}_1$ and $\bar{e}_2$ such that $\bar{e}_1\subseteq\varphi(\M,c^*)$ and $\bar{e}_2>\varphi(\M,c^*)$. Either of $\bar{e}_1$ or $\bar{e}_2$ can be empty. Let $\bar{c}' = \bar{c}\bar{e}_1$ and $\bar{d}' = \bar{d}\bar{e}_2$. Thus, $\M\models\beta_i(\bar{c}',\bar{d}')$. By Lemma 4.33, there is a formula $\beta^*_i(\bar{w})$ such that
$$\M\models\exists\bar{w}(\beta_i^*(\bar{w}) \;\wedge\; (\bar{w} > \varphi(M, c^*))).$$

Since $c^*$ is positive and nonstandard and $c^*\in\varphi(\M, c^*)$, it follows that $\bar{w}$ is positive and nonstandard. Therefore, by Theorem 4.34,
$$\M\models\forall u\exists\bar{w}>u\;\beta_i^*(\bar{w}).$$
But, then by Lemma 4.33 again, we have
$$\M\models\forall u\exists\bar{b}'>u\;\beta_i(\bar{c}',\bar{b}').$$
Inverting the splitting of variables as done before, this implies
$$\M\models\forall u\exists\bar{b}>u\;\exists\bar{z}\;\beta_i(\bar{c},\bar{b},\bar{z}).$$

In particular, $\M\models\forall u\exists\bar{b}>u\;\theta(\bar{c},\bar{b}).$
\end{proof}

\begin{defn}
Let $G = \oplus_{i\in\omega}H_i$ be an $\omega$-sum, $0 < a\in G$, $\rho$ an algebraic number, and $L\in\Z[\sigma]$ be a minimal polynomial that $\rho$ satisfies. Then, we say 
$$a \mbox{ is {\em near $\rho$}} \iff a > 0 \;\wedge\; \exists b (a \le b\le 2a \;\wedge\; L(b) = 0).$$
Equivalently, we also say
$$a \mbox{ is {\em near $\rho$}} \iff a > 0 \;\wedge\; \exists b (a \le b\le 2a \;\wedge\; \sigma(b) = \rho\cdot b).$$
\end{defn}

\begin{rmk}
It is easy to see that if $G = \oplus_{i\in\omega}H_i$ is an $\omega$-sum, then for any algebraic $\rho$, the definable set $near_\rho(G) := \{x\in G\mid x\mbox{ is } \near\rho\}$ is a union of convex classes, where each convex class is of the form $(U_{\ell+1}\setminus U_\ell)\cap\{x\in G\mid x > 0\}$ for some $\ell\in\omega$ such that $H_\ell\models\MODDAG_\rho$. Although this nice characterization of the set of elements near $\rho$ usually fails in an elementary extension $\M$ of $G$ because there could be positive nonstandard elements in $\M$ which do not belong to any $U_\ell$ but are still $\near\rho$, the fact that $\near_\rho(\M)$ is a union of convex classes is however preserved by elementarity because of the following:
$$G \models \forall x[x \mbox{ is }\near \rho \to\exists u\exists v(u < x < v \;\wedge\; \forall y(u < y < v\to y \mbox{ is }\near\rho))].$$
\end{rmk}

This motivates the following definition.
\begin{defn}
Let $G = \oplus_{i\in\omega}H_i$ be an $\omega$-sum, and $0 < a\in G$ be $\near\rho$ for some algebraic $\rho$. By the {\em $\rho$-class of $a$}, we mean the largest convex subset $A$ of $G$ containing $a$ with the property that for all $c$ in $A$, $c$ is $\near\rho$. By a {\em $\rho$-class of $G$}, we mean the $\rho$-class of some element $0 < a\in G$. We denote the $\rho$-class of $a$ by $[a]_\rho$. It is easy to see that, for a fixed $\rho$, $[a]_\rho$ is a definable set (with parameter $a$):
\begin{eqnarray*}
b\in[a]_\rho & \iff & [a \le b \;\wedge\; \forall c(a\le c\le b\to c\mbox{ is } \near\rho)] \\
& & \bigvee\;[b \le a \;\wedge\; \forall c(b\le c\le a\to c\mbox{ is } \near\rho)].
\end{eqnarray*}
\end{defn}

With this definition, we make an important observation.
\begin{lem}
Given an $\omega$-sum $G$, an algebraic $\rho$, and elements $0 < a < a'\in G$ both $\near\rho$, the statement ``$[a']_\rho$ is the next $\rho$-class of $G$ after $[a]_\rho$'' is elementary.
\end{lem}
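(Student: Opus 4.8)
The plan is to exhibit an explicit first-order formula, with parameters $a$ and $a'$, that expresses ``$[a']_\rho$ is the next $\rho$-class of $G$ after $[a]_\rho$,'' and then to verify that it defines the intended relation. Two ingredients are already first-order and I would use them as black boxes. First, by Definition 4.37 the predicate ``$x$ is $\near\rho$'' is first-order, since $\rho$ is a fixed algebraic number and hence its minimal polynomial $L$ is a fixed element of $\Z[\sigma]$. Second, by Definition 4.39, for any fixed $0<a$ that is $\near\rho$ the membership relation ``$b\in[a]_\rho$'' is defined by an explicit $\L_{OG,\sigma}$-formula, which I will abbreviate $\Psi(b,a)$.

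Granting these, I would propose the formula
\begin{align*}
\next_\rho(a,a') := {}& (a \mbox{ is } \near\rho) \wedge (a' \mbox{ is } \near\rho) \wedge (a < a') \wedge \neg\Psi(a',a) \\
& {}\wedge \forall c\big[(a < c < a' \;\wedge\; c \mbox{ is } \near\rho) \to (\Psi(c,a) \vee \Psi(c,a'))\big],
\end{align*}
and then check both that it is first-order and that it is correct. The first line records that $[a]_\rho$ and $[a']_\rho$ are genuine $\rho$-classes (guaranteed by the hypotheses that $a,a'$ are $\near\rho$) and that they are distinct with $[a]_\rho$ strictly below $[a']_\rho$: indeed $\neg\Psi(a',a)$ says $a'\notin[a]_\rho$, and since distinct $\rho$-classes are disjoint (each being a maximal convex $\near\rho$ set by Definition 4.39) while $a<a'$, and disjoint convex subsets of a linear order are themselves linearly ordered, we get $[a]_\rho < [a']_\rho$. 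The universal clause then asserts exactly that no $\rho$-class lies strictly between. I would verify both directions: if some class $[c_0]_\rho$ sits strictly between $[a]_\rho$ and $[a']_\rho$, then any element $c\in[c_0]_\rho$ satisfies $a<c<a'$, is $\near\rho$, and lies in neither $[a]_\rho$ nor $[a']_\rho$, so the clause fails; conversely, a witness $c$ to the failure of the clause is $\near\rho$, so by Remark 4.38 it generates a full $\rho$-class $[c]_\rho$, which is disjoint from both $[a]_\rho$ and $[a']_\rho$ and, being squeezed strictly between $a$ and $a'$, sits strictly between them in the ordering.

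The only non-formal point, and what I expect to be the main (if minor) obstacle, is the reduction from a statement quantifying over $\rho$-classes -- a priori a second-order notion -- to the first-order clause displayed above. This rests entirely on two facts already established: that $\near_\rho$ is a union of convex classes (Remark 4.38), so that any $\near\rho$ element spawns a genuine $\rho$-class, and that $\rho$-classes, being maximal convex $\near\rho$ sets, are pairwise equal or disjoint and hence linearly ordered. Once these are invoked, the equivalence between ``an intermediate $\rho$-class exists'' and ``an intermediate $\near\rho$ element outside both classes exists'' is immediate, and $\next_\rho(a,a')$ is manifestly first-order. Being first-order with parameters, its truth value is preserved in passing between $G$ and any elementary extension, which is precisely the assertion that the ``next $\rho$-class'' relation is elementary.
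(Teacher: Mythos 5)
Your proof is correct and takes essentially the same route as the paper: the paper's proof consists of exactly the same displayed formula, with your clauses $\neg\Psi(a',a)$ and $\Psi(c,a)\vee\Psi(c,a')$ written there as $[x]_\rho\neq[y]_\rho$ and $[z]_\rho=[x]_\rho\vee[z]_\rho=[y]_\rho$, which unwind to the same thing via the definable membership formula of Definition 4.38. Your added verification that the first-order clause correctly captures the a priori second-order statement (using convexity and pairwise disjointness of $\rho$-classes) is a detail the paper leaves implicit.
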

\begin{proof} 
The following formula $next_\rho(x, y)$ defines the given relation:
\begin{eqnarray*}
next_\rho(x, y) & := & (0 < x < y) \;\wedge\; x \mbox{ is }\near\rho \;\wedge\; y\mbox{ is } \near\rho \;\wedge\; ([x]_\rho\not=[y]_\rho)\;\wedge\\
& & \forall z\Big((x\le z\le y\;\wedge\; z \mbox{ is } \near\rho)\to ([z]_\rho = [x]_\rho\vee [z]_\rho = [y]_\rho)\Big).\qedhere
\end{eqnarray*}
\end{proof}

\begin{lem}
Fix an algebraic $\rho$. Suppose $G = \oplus_{i \in\omega} H_i$ is an $\omega$-sum with $Con^G_\rho$ infinite, but not cofinite, in $\omega$. Then $T =Th(G)$ is not model complete.
\end{lem}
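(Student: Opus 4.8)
The plan is to show non-model-completeness by exhibiting a model $\M \succeq G$ together with a substructure $\N \subseteq \M$ that is itself a model of $Th(G)$, but such that the embedding $\N \hookrightarrow \M$ is not elementary, because some existential formula holds in $\M$ but fails in $\N$. Concretely, since $Con^G_\rho$ is infinite but not cofinite, the complement $\omega \setminus Con^G_\rho$ is also infinite, so the indices $i$ with $H_i \models \forall x(\sigma(x) = \rho \cdot x)$ and the indices $i$ where this fails both recur unboundedly. The strategy is to exploit this by producing a nonstandard element in an elementary extension whose behavior relative to $\rho$ (whether or not it is $\near\rho$) can be detected existentially in $\M$ but not preserved when we pass to a carefully chosen non-elementary substructure. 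This is the $Th(\omega, <, 0)$ phenomenon advertised just before Theorem~4.28: the analogue of the successor relation — here, the relation $next_\rho$ from Lemma~4.40 picking out the next $\rho$-class — is the source of the failure.

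First I would pass to a sufficiently saturated elementary extension $\M \succeq G$ and use saturation together with the infinitude of $Con^G_\rho$ to find a positive nonstandard element $c^*$ lying strictly above all of $U_\ell$ ($\ell \in \omega$) such that $c^*$ is $\near\rho$, i.e.\ $c^*$ sits in a nonstandard block on which $\sigma$ acts as multiplication by $\rho$. Because $Con^G_\rho$ is not cofinite, I can simultaneously arrange that there are arbitrarily large standard indices where the block is \emph{not} of type ``$=\rho$''; this lets me locate, just above $c^*$, a strong cut of $\M_{\ge 0}$ in the sense of Definition~4.32, with the differing rate of growth witnessed by the $\rho$-versus-non-$\rho$ behavior on the two sides. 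The key formula to track is the assertion that there exists an element $b > c^*$ with $b$ $\near\rho$ and $[b]_\rho$ strictly above $[c^*]_\rho$ — an existential statement true in $\M$ by construction.

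Next I would build the non-elementary substructure $\N$. The idea is to take $\N$ to be the $\tau$-substructure of $\M$ generated by the standard part together with $c^*$ and its $\Q(\rho)$-span, chosen so that $\N$ contains no witness to the ``next $\rho$-class after $[c^*]_\rho$'' existential formula, yet $\N$ still satisfies $Th(G)$. To verify $\N \models Th(G)$ I would invoke the parametric growth result Theorem~4.35 and the matching-witness Lemma~4.33: these guarantee that the quantifier-free type of tuples straddling the strong cut defined by $\varphi(x,c^*)$ can be realized arbitrarily high up, so that $\N$ is closed under the relevant operations and genuinely models the $\omega$-sum theory. The failure of elementarity is then exactly that $\M \models \exists b\,(b > c^* \wedge next_\rho(c^*, b))$ while $\N$ omits such a $b$, so the existential formula is not preserved upward from $\N$ to $\M$; by the definition of model completeness this shows $T$ is not model complete.

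The main obstacle I anticipate is the construction of $\N$ so that it simultaneously (a) omits the witness to the next-$\rho$-class formula and (b) remains a model of $Th(G)$ rather than merely a substructure. These two requirements pull in opposite directions: omitting witnesses wants $\N$ small, while satisfying the full first-order theory wants $\N$ rich enough to realize all the quantifier-free types that the growth lemmas force to be realizable. Reconciling them is precisely where Lemma~4.33 and Theorem~4.35 must be deployed with care — I expect to need the strong-cut machinery to show that adding the missing witness is \emph{not} forced by $Th(G)$, i.e.\ that the ``next class'' relation is not existentially definable over the standard part plus $c^*$. This is the analogue of showing that the successor function is not existentially definable in $(\omega, <, 0)$, and, as the paragraph after Theorem~4.28 suggests, it is the crux of the whole non-model-completeness argument.
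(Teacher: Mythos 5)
Your proof architecture has a genuine gap, and as written the target configuration is self-contradictory. You want a substructure $\mN \subseteq \M$ with $\mN \models Th(G)$ that omits a witness to $\exists b\,(b > c^* \wedge \next_\rho(c^*, b))$. But the hypothesis that $Con^G_\rho$ is infinite and not cofinite makes the sentence $\forall x\,(x \mbox{ is } \near\rho \to \exists y\,(y \mbox{ is } \near\rho \wedge \next_\rho(x,y)))$ part of $Th(G)$ --- this is exactly the sentence the paper's proof starts from --- so any model of $Th(G)$ containing $c^*$ (which is $\near\rho$ in $\mN$, since by your choice $\sigma(c^*) = \rho\cdot c^*$) must contain some $b$ with $\mN \models \next_\rho(c^*, b)$. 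Your concrete candidate for $\mN$, the substructure generated by the standard part and the $\Q(\rho)$-span of $c^*$, therefore fails requirement (b) outright: in it $[c^*]_\rho$ is the topmost $\rho$-class, the sentence above fails, and $\mN \not\models Th(G)$. The only coherent version of your plan is to distinguish witnesses in the sense of $\mN$ from witnesses in the sense of $\M$ (possible, because $\next_\rho$ contains genuine universal quantifiers --- note this also undercuts your framing of it as an ``existential formula'') and to build $\mN \models Th(G)$ whose internal $\next_\rho$-witnesses are wrong in $\M$. But the tools you cite cannot certify this: Lemma 4.33 and Theorem 4.35 transfer quantifier-free formulas across a strong cut inside one structure (or an elementary pair $G \preceq \M$); they give no criterion for a generated substructure to satisfy the full first-order theory. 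Producing a non-elementarily embedded pair of models of $T$ is precisely the assertion being proved, so your plan is circular at its crux.

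The paper avoids constructing any second model. It assumes $T$ is model complete, fixes a single elementary extension $\M \succeq G$ with a nonstandard $a$ that is $\near\rho$ and an $a'$ with $\M \models \next_\rho(a,a')$, and uses model completeness, via compactness over the atomic diagram, to obtain a quantifier-free $\theta$ with $T \cup \{\theta(\bar{c}, a, a', \bar{d})\} \vdash \next_\rho(a, a')$, hence $T \vdash \forall u \forall \bar{v}\,(\theta(\bar{c}, a, u, \bar{v}) \to \next_\rho(a, u))$. Your strong-cut formula $\varphi_\rho(x,a)$ and Theorem 4.35 then enter with the opposite polarity from your plan: they show the witness tuple $(a', \bar{d})$ can be relocated above any prescribed element of $\M$, yielding $a''$ above $[a']_\rho$ with $\M \models \theta(\bar{c}, a, a'', \bar{d}^*)$ and hence $\M \models \next_\rho(a, a'')$, contradicting the fact that $a'$ already realizes the next $\rho$-class after $[a]_\rho$. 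Everything happens inside the one model $\M$, and the only use of model completeness is the quantifier-free implication; no substructure is ever built or verified.
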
 
\begin{proof}
For a contradiction, assume that $T$ is model complete. 

Let $\M$ be an elementary extension of $G$ such that $nonstandard(\M)\not=\emptyset$. \newline
Since $Con^G_\rho = \{i\in\omega\mid H_i\models\forall x(\sigma(x) = \rho\cdot x)\}$ is infinite, but not cofinite, we have
$$G\models\forall x(x \mbox{ is } \near\rho \to \exists y(y \mbox{ is } \near\rho\;\wedge next_\rho(x, y))).$$
By elementarity, this property also holds in $\M$. In particular, $\{x\in\M\mid x \mbox{ is } \near\rho\}$ is cofinal in $\M$. So, pick a nonstandard $0 < a\in\M$ such that $a$ is $\near\rho$.

Now consider the formula $\next_\rho(a, x)$ and pick $a'\in\M$ such that $\M\models\next_\rho(a,a')$. Since $T$ is assumed to be model complete, it follows that there is a quantifier-free formula $\theta(\bar{e})$ in the atomic diagram of $\M$ such that
$$T\cup\{\theta(\bar{e})\}\vdash\next_\rho(a,a').$$
Without loss of generality, we may assume $a, a'\in\bar{e}$. Now define the formula $\varphi(x,a)$ as follows:
$$\varphi_\rho(x, a) := (0\le x\le a)\vee (x\in[a]_\rho).$$
Clearly, $\varphi_\rho(M, a)$ defines a strong cut. Partition the variables $\bar{e}$ as $(\bar{c}, a, a', \bar{d})$ such that $\bar{c}\subset\varphi_\rho(M, a)$ and $\bar{d}>\varphi_\rho(M,a)$. So, we have
\begin{eqnarray}
\nonumber
T \cup\{\theta(\bar{c}, a, a', \bar{d})\} & \vdash & \next_\rho(a, a')\\\nonumber
T & \vdash & \theta(\bar{c}, a, a', \bar{d})\to\next_\rho(a, a')\\
T & \vdash & \forall u\forall\bar{v}(\theta(\bar{c}, a, u, \bar{v})\to\next_\rho(a, u))
\end{eqnarray}
Now, $\M\models\theta(\bar{c}, a, a', \bar{d})$ since $\theta(\bar{e})$ is in the atomic diagram of $\M$. Also, $a$, $a'$ and $\bar{d}$ are nonstandard. So by Theorem 4.35, we have
$$\M\models\forall y\exists w\exists\bar{z}(w > y \;\wedge\;\bar{z}>y\;\wedge\;\theta(\bar{c}, a, w, \bar{z})).$$
Pick $y > [a']$ and choose witness $a'' > y$ and $\bar{d}^*> y$ such that 
$$\M\models\theta(\bar{c}, a, a'', \bar{d}^*).$$
Then it follows from (1) that
$$\M\models\next_\rho(a, a''),$$
which is a contradiction because $\M\models\next_\rho(a, a')$ and also $\M\models a'' > [a']$. This proves that $T$ cannot be model complete.
\end{proof}

\begin{lem}
Fix an algebraic $\rho$. Suppose $G = \oplus_{i \in\omega} H_i$ is an $\omega$-sum such that $Con^G_\rho$ is finite, but $Inc^G_\rho$ and $Dec^G_\rho$ are infinite. Then $Th(G)$ is not model complete.
\end{lem}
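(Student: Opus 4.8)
The plan is to mirror the proof of Lemma 4.41, replacing the role of the $\rho$-classes (the blocks on which $\sigma$ acts as $\rho\cdot$) by the maximal blocks on which $\sigma(x)<\rho\cdot x$. Since $Con^G_\rho$ is finite, the near-$\rho$ elements form only finitely many bounded classes and are confined to the standard part; by contrast $Dec^G_\rho$ and $Inc^G_\rho$ are both infinite, so above the standard part every positive element satisfies exactly one of $\sigma(x)>\rho\cdot x$ or $\sigma(x)<\rho\cdot x$, and these two behaviours alternate cofinally. I would exploit this alternation: the relation that fails to be existentially definable is ``$y$ lies in the decreasing block immediately above that of $x$''.

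First I would set up the definable framework. Both $Inc^G_\rho$ and $Dec^G_\rho$ are quantifier-free definable (via the minimal polynomial $L$ of $\rho$, as $L(x)>0$ and $L(x)<0$ on the positive cone). For positive $a$ with $\sigma(a)<\rho\cdot a$, let $[a]_{Dec}$ denote its \emph{Dec-class}, the largest convex set of positive elements containing $a$ on which $\sigma(x)<\rho\cdot x$ holds throughout; exactly as in Definition 4.39 this is a definable set, and exactly as in Lemma 4.40 the relation $\next_{Dec}(x,y)$ asserting ``$[y]_{Dec}$ is the Dec-class immediately above $[x]_{Dec}$'' is definable. Because $Dec^G_\rho$ is infinite the Dec-classes are cofinal; because $Inc^G_\rho$ is also infinite, consecutive Dec-classes are genuinely separated by an intervening block on which $\sigma(x)>\rho\cdot x$, so (after discarding the finitely many near-$\rho$ classes) the Dec-classes form a discrete cofinal sequence of order type $\omega$, and each has an immediate successor. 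All of these facts are first-order and transfer to any elementary extension.

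Now I would argue by contradiction, following 4.41 closely. Suppose $Th(G)$ is model complete and let $G\preceq\M$ have a nonstandard element. Using cofinality of $Dec^G_\rho$, pick a positive nonstandard $a$ with $\sigma(a)<\rho\cdot a$, and pick $a'$ with $\M\models\next_{Dec}(a,a')$; then $a'$ is nonstandard and lies in the next Dec-class. By model completeness $\next_{Dec}(a,\cdot)$ is given by an existential formula, so there is a quantifier-free $\theta(\bar e)$ in the atomic diagram of $\M$, with $a,a'\in\bar e$, such that, writing $\bar e=(\bar c,a,a',\bar d)$, one has $T\vdash\forall u\forall\bar v\,(\theta(\bar c,a,u,\bar v)\to\next_{Dec}(a,u))$. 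Let $\varphi(x,a):=(0\le x\le a)\vee(x\in[a]_{Dec})$. Since $a$ is nonstandard, $\varphi(\M,a)$ contains the whole standard part and its top is the Dec-class $[a]_{Dec}$, while the bottom of its complement is the increasing block immediately above $[a]_{Dec}$; hence $\varphi(\M,a)$ is a strong cut (Definition 4.31), witnessed by $\rho$, with $\sigma\le\rho\cdot$ cofinally below and $\sigma>\rho\cdot$ coinitially above. With $c^*=a$, $\bar c\subseteq\varphi(\M,a)$, and $(a',\bar d)$ nonstandard and above the cut, Theorem 4.35 yields
$$\M\models\forall u\,\exists\bar b>u\;\theta(\bar c,a,\bar b).$$
Choosing $u$ above all of $[a']_{Dec}$ produces a witness $a''>u$ with $\M\models\theta(\bar c,a,a'',\bar d^*)$, whence $\next_{Dec}(a,a'')$ holds by the implication above; but $a''$ lies in a strictly later Dec-class than $a'$, contradicting uniqueness of the immediate successor. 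This contradiction shows $Th(G)$ is not model complete.

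I expect the main obstacle to be the verification that $\varphi(\M,a)$ is a genuine strong cut --- equivalently, that the region just above the Dec-class $[a]_{Dec}$ really is an increasing block and is not polluted by near-$\rho$ or nonstandard ``constant'' elements. This is exactly where both hypotheses are used: the finiteness of $Con^G_\rho$ forces $\near_\rho$ to be bounded (so there are no nonstandard near-$\rho$ elements and the tail is purely increasing/decreasing), while the infinitude of $Inc^G_\rho$ forces an increasing block immediately above every Dec-class, cofinally. Once the cut is known to be strong, the remainder is a routine transcription of the pushing-up argument of Lemma 4.41.
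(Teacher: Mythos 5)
Your proposal is correct and takes essentially the same approach as the paper: the paper's own proof likewise reduces the lemma to the pushing-up argument of Lemma 4.40 (strong cut, quantifier-free $\theta$ from the atomic diagram, then Theorem 4.35), except that it works with the classes on which $\sigma(x) > \rho\cdot x$ and the successor relation $\next^>_\rho$ where you work with Dec-classes --- an immaterial difference, since the hypotheses on $Inc^G_\rho$ and $Dec^G_\rho$ are symmetric. Your verification that $\varphi(x,a)$ defines a strong cut (using finiteness of $Con^G_\rho$ to keep near-$\rho$ elements bounded, and infinitude of $Inc^G_\rho$ to place an increasing block immediately above every Dec-class) spells out a step the paper leaves implicit.
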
 
\begin{proof}
Observe that since $Con^G_\rho$ is finite, there is a number $N$ such that $\{x\in G\mid x > U_N \mbox{ and } \sigma(x) > \rho\cdot x\}$ and $\{x\in G\mid x > U_N \mbox{ and } \sigma(x) < \rho\cdot x\}$ are both unions of convex sets. Then, for any $a$ in the first set, it makes sense to define the $\rho^>$-class $[a]^>_\rho$ of $a$ as the largest convex set $A$ containing $a$ such that for all $b\in A$, $\sigma(b) > \rho\cdot b$.

Now a very similar proof as that of the last lemma works with the formula $\next_\rho(x, y)$ replaced by the formula
\begin{eqnarray*}
\next^>_\rho(x,y) & := & (0 < x < y) \;\wedge\; (\sigma(x) > \rho\cdot x) \;\wedge\; (\sigma(y) > \rho\cdot y) \;\wedge\; ([x]^>_\rho\not=[y]^>_\rho)\;\wedge\\
& & \forall z\Big((x\le z\le y\;\wedge\;\sigma(z) > \rho\cdot z)\to([z]^>_\rho = [x]^>_\rho\vee[z]^>_\rho = [y]^>_\rho)\Big)
\end{eqnarray*}
and the observation that because of the given hypothesis
$$G\models\forall x(\sigma(x) > \rho\cdot x\to\exists y(\sigma(y) > \rho\cdot y \;\wedge\; next^>_\rho(x, y))).$$
We also need to replace $\varphi_\rho(x, a)$ by the obvious formula
$$\varphi^>_\rho(x, a) := (0\le x\le a)\vee(x\in[a]^>_\rho).\qedhere$$
\end{proof}

We are now ready to prove the main theorem of this subsection.
\begin{proof}[Proof of Theorem 4.28]
If $Con^G_\rho$ is cofinite for some algebraic $\rho$, then $G$ is elementarily equivalent to a finite $n$-sum (with the last ``coordinate'' of $G$ being a model of $\MODDAG_\rho$), and hence $Th(G)$ is model complete in $\L_{OG, \sigma}$.

If $Con^G_\rho$ is infinite, but not cofinite, for some algebraic $\rho$, then $Th(G)$ is not model complete in $\L_{OG, \sigma}$ by Lemma 4.40.

So now we are in the situation when $Con^G_\rho$ is finite for all algebraic $\rho$. Then, as we have seen before, the sets $Inc^G_\rho$ and $Dec^G_\rho$ are unions of convex sets on a tail of $G$ for all algebraic $\rho$. 

Now, if both $Inc^G_\rho$ and $Dec^G_\rho$ are infinite for some algebraic $\rho$, then by Lemma 4.41, $Th(G)$ is not model complete in $\L_{OG, \sigma}$.

Otherwise, we are in the situation when one of $Inc^G_\rho$ or $Dec^G_\rho$ is cofinite for every algebraic $\rho$. This basically means that for every linear difference polynomial $L\in\Z[\sigma]$, there is a natural number $N = N(L)$ such that
$$G\models\forall x(x > U_N\to L(x) > 0)\vee\forall x(x > U_N\to L(x) < 0) =:\varphi_L.$$
This consistent collection $\{\varphi_L\mid L\in\Z[\sigma]\}$ of infinitely many sentences specifies that the type at infinity of $G$ is a model of $\MODDAG_{\rho^*}$ for some non-algebraic $\rho^*$. In particular, $\rho^*$ is transcendental, infinite, or infinitesimally close to an algebraic. Let us denote by $Th(G|_n)$ the elementary theory of the corresponding $n$-sum $\oplus_{i<n}H_i$. By Remark 4.27, it follows that 
$$Th(G) = \bigcup_{n\in\N}Th(G|_n)\cup\{\varphi_L\mid L\in\Z[\sigma]\}$$
is also model complete in $\L_{OG, \sigma}$ in the event that there is a unique behavior at infinity.
\end{proof}

\section{Ordered Field with Increasing Automorphism}
Now we consider the theory $\OF$ of ordered fields in the language of ordered rings $\L_{OR} := \{+, -, \times, 0, 1, <\}$. We denote by $\RCF$ the theory of real closed fields in the same language. Inspired by previous examples, let us consider the case of an ``increasing'' automorphism. But note that if $\sigma$ is a field-automorphism, then $\sigma(1) = 1$, which implies $\sigma$ is identity on $\Z$ and consequently on $\Q$. Since the rationals are dense in the reals, this shows that the set $\R$ of real numbers, considered as a field, has only the trivial automorphism. Moreover, since $\R_{alg}$, the set of real algebraic numbers, is a prime model of $\RCF$, any automorphism of any real closed field behaves as identity when restricted to $\R_{alg}$.  Also note that, if $1 < x < \sigma(x)$, then $0 < \sigma(x^{-1}) = (\sigma(x))^{-1} < x^{-1}$. Since for any nonstandard $x > \R_{alg}$, the elements $x$ and $2x$ have the same type over $\emptyset$ in $\L_{OR}$, we can hope to have an automorphism, which is increasing only on the ``infinite'' elements, i.e., elements $x$ such that $x > \R_{alg}$. Unfortunately this is not a first-order condition. However, we can change it into a first-order statement with the following definition.

\begin{defn}
An automorphism $\sigma$ on $\OF$ is said to be {\it (eventually) increasing} if
$$\exists y\forall x\;(x > y\rightarrow x < \sigma(x)).$$
\end{defn}

In other words, $\sigma$ is increasing on a tail. We denote by $\RCF^+_\sigma$ the theory of $\RCF$ together with an (eventually) increasing automorphism in the language $\L_{OR, \sigma}$. However, as in the group case in Section 3, this restricted class of automorphisms does not work quite well either because we will now show that $\RCF^+_\sigma$ does not have a model companion in $\L_{OR, \sigma}$. But first, we prove the following lemma.

\begin{lem} Every model of $\RCF_\sigma$ can be extended to a model of $\RCF^+_\sigma$.
\end{lem}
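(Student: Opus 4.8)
The plan is to construct the extension explicitly as a Hahn field over $K$ carrying a shift-like automorphism that is forced to expand the infinite elements. Fix $(K,\sigma)\models\RCF_\sigma$. Choose a nontrivial divisible ordered abelian group $\Gamma$ together with an order-automorphism $s\colon\Gamma\to\Gamma$ satisfying $s(\gamma)<\gamma$ for every $\gamma<0$; the simplest choice is $\Gamma=\Q$ with $s(\gamma)=2\gamma$. Let $K':=K((t^\Gamma))$ be the field of formal series $\sum_\gamma c_\gamma t^\gamma$ with coefficients in $K$ and well-ordered support in $\Gamma$, ordered so that $t$ is a positive infinitesimal. I embed $(K,\sigma)$ into $K'$ by identifying $K$ with the constant series (support contained in $\{0\}$), which is visibly an $\L_{OR}$-embedding, and note that the infinite monomials $t^\gamma$ with $\gamma<0$ all exceed every element of $K$.

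Next I define $\sigma'\colon K'\to K'$ by $\sigma'\big(\sum_\gamma c_\gamma t^\gamma\big):=\sum_\gamma \sigma(c_\gamma)\,t^{s(\gamma)}$, so that $\sigma'$ acts as $\sigma$ on coefficients and pushes exponents forward by $s$. Because $s$ is an order-automorphism of $\Gamma$, it carries well-ordered supports to well-ordered supports, so $\sigma'$ is well defined and bijective, with inverse built from $\sigma^{-1}$ and $s^{-1}$. Additivity is immediate, and multiplicativity follows from $s$ being a group homomorphism, since $t^{s(\gamma+\delta)}=t^{s(\gamma)}t^{s(\delta)}$; hence $\sigma'$ is a field automorphism of $K'$ restricting to $\sigma$ on the constants, and it automatically preserves the order because any automorphism of a real closed field preserves the order (positivity being equivalent to being a square). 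As $K$ is real closed and $\Gamma$ is divisible, $K'$ is real closed by the standard fact that $K((t^\Gamma))$ is real closed whenever its residue field is real closed and its value group is divisible; thus $(K',\sigma')\models\RCF_\sigma$ and extends $(K,\sigma)$.

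It remains to verify that $\sigma'$ is eventually increasing, which is the crux. Consider a positive $x\in K'$ whose least exponent $\gamma$ is negative; then $x$ is infinite with positive leading coefficient $c$, the least exponent of $\sigma'(x)$ is $s(\gamma)$, and since $s(\gamma)<\gamma<0$ the series $\sigma'(x)$ strictly dominates $x$, whence $\sigma'(x)>x$. Taking the threshold $y=t^{-1}$, every $x>y$ has least exponent $\le-1<0$, so $\sigma'(x)>x$, which establishes $\exists y\,\forall x\,(x>y\to x<\sigma'(x))$ and therefore $(K',\sigma')\models\RCF^+_\sigma$. The point demanding the most care is this uniform leading-term comparison: one must confirm that the domination $\sigma'(x)>x$ holds for \emph{all} large $x$ rather than merely for the monomials $t^\gamma$, which is exactly where the hypothesis $s(\gamma)<\gamma$ across the whole negative cone of $\Gamma$ is needed, together with the fact that the sign of a Hahn series is determined by its leading term.
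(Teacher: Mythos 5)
Your proof is correct, but it takes a genuinely different route from the paper's. You build the extension explicitly as the Hahn field $K((t^{\Gamma}))$ with $\Gamma=\Q$, letting the new automorphism act by $\sigma$ on coefficients and by the expanding map $\gamma\mapsto 2\gamma$ on exponents; the verification rests on standard valuation-theoretic facts (Neumann's lemma making the Hahn field and the twisted map well defined, and real closedness of $K((t^{\Gamma}))$ when $K$ is real closed and $\Gamma$ divisible) plus a leading-term computation showing $\sigma'(x)>x$ for all $x>t^{-1}$, which you carry out correctly: there is no cancellation at exponent $s(\gamma)$ precisely because $s(\gamma)<\gamma=\min\mathrm{supp}(x)$, and every $x>t^{-1}$ has negative least exponent and positive leading coefficient. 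The paper instead argues model-theoretically: it Skolemizes $Th(R)$, uses Ramsey's theorem and compactness to produce an $\L^{*}$-indiscernible sequence $\langle a_i : i\in\Z\rangle$ above $R$, extends $\sigma$ by the shift $a_i\mapsto a_{i+1}$, uses o-minimality of $\RCF$ to see that the type at infinity over $R$ is $\sigma$-invariant (so the shift is partial elementary), extends to an automorphism of a saturated homogeneous model, and finally cuts down to the Skolem hull of $R\cup\{a_i : i\in\Z\}$, in which the $a_i$ are cofinal, forcing the restricted automorphism to be eventually increasing. Your approach buys concreteness: an explicit model, an explicit threshold $t^{-1}$, and no appeal to Ramsey, compactness, or homogeneity. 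The paper's approach is softer and more portable: it works in any setting where the type at infinity is definable and automorphism-invariant (essentially any o-minimal theory), and it produces an extension of the same cardinality as $R\cup\{a_i\}$, whereas the Hahn field is much larger. Two minor points of presentation: you invoke order-preservation of $\sigma'$ via real closedness of $K'$ one clause before you establish that $K'$ is real closed (harmless, since order-preservation also follows directly from the action on leading coefficients), and it is worth stating explicitly that $s(0)=0$ is what guarantees $\sigma'$ restricts to $\sigma$ on the constant series.
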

\begin{proof}
Let $R\models\RCF$ and $\sigma$ be an automorphism of $R$.

Let $\L( R) = \L_{OR}\cup\{c_r : r\in R\}$, and $T = Th_{\L( R)}( R)$. Let $T^*\supseteq T$ be a Skolemization of $T$ in some language $\L^* \supseteq \L( R)$.

By Ramsey's Theorem and compactness, there exists a model $M\models T^*$ such that there is an $\L^*$-indiscernible sequence $\langle a_i: i\in\Z\rangle$ in $M$ with $a_j < a_i$ for all $j < i$, and $a_i > R$ for all $i\in\Z$. It follows by $\L^*$-indiscernibility that for each $j < i$, we have $a_j^n < a_i$ for all $n < \omega$. Moreover, $R\preccurlyeq_{\L_{OR}}M$. Without loss of generality, we may assume that $M$ is sufficiently saturated and homogeneous.

Let $\tilde{R} = $ Skolem Hull$(R\cup\{a_i : i\in\Z\})$ in $M$. By Tarski-Vaught, $\tilde{R}\preccurlyeq_{\L^*}M$. In particular, $R\preccurlyeq_{\L_{OR}}\tilde{R}$ and $\tilde{R}\models\RCF$.

Extend $\sigma$ to $\bar{\sigma}$ on $R\cup\{a_i : i\in\Z\}$ by defining: $\bar{\sigma}(r ) = \sigma(r )$ for all $r\in R$, and $\bar{\sigma}(a_i) = a_{i+1}$ for all $i\in\Z$. Note that for each $i\in\Z$, the $\L_{OR}-$type of $a_i$ over $R$ is given by
\begin{eqnarray*}
\tp_{\L_{OR}}(a_i/R) := \{\varphi(x, \bar{b}) &:& \varphi(x,\bar{y})\in\L_{OR} \mbox{ with } lg(\bar{y}) = n \mbox{ for some } n\in\omega,\\
&&\bar{b}\in R^n\mbox{ and }M\models\varphi(a_i, \bar{b})\}.
\end{eqnarray*}
By {\it o-minimality} of $\RCF$, this is exactly equal to the following type:
\begin{eqnarray*}
p(x) = \{\varphi(x, \bar{b}) &:& \varphi(x, \bar{y})\in\L_{OR} \mbox{ with } lg(\bar{y}) = n \mbox{ for some } n\in\omega,\\
&&\bar{b}\in R^n\mbox{ and } R\models\exists z\forall x\;(x >z \rightarrow \varphi(x,\bar{b}))\}.
\end{eqnarray*}
The nice thing about $p(x)$ is that $p(x)$ is $\sigma$-invariant: for any $\varphi(x, \bar{y})\in\L_{OR}$ with $lg(\bar{y}) = n$ and $\bar{b}\in R^n$,
\begin{eqnarray*}
\varphi(x,\bar{b})\in p(x) & \iff & R\models\exists z\forall x\;(x >z \rightarrow \varphi(x,\bar{b})) \\
& \iff & R\models\exists z\forall x\;(x >z \rightarrow \varphi(x,\overline{\sigma(b)})) \\
& \iff & \varphi(x, \overline{\sigma(b)})\in p(x)
\end{eqnarray*}
Thus, the map $\bar{\sigma}$ on $R\cup\{a_i : i\in\Z\}$ is partial elementary in $M$. By homogeneity of $M$, it extends to an automorphism, still denoted by $\bar{\sigma}$, of $M$. Let $\tilde{\sigma} = \bar{\sigma}|_{\tilde{R}}$.

Thus, $\tilde{\sigma}$ is $1-1$. Also for any $a\in\tilde{R}$, there exists a term $\tau(x_1, \ldots, x_m, \bar{b})$ over $R$ such that $a = \tau(a_{i_1}, \ldots, a_{i_m}, \bar{b})$ for some $i_1, \ldots, i_m\in\Z$ and $\bar{b}$ from $R$. Let $c = \tau(a_{i_1 - 1}, \ldots, a_{i_m - 1}, \overline{\sigma^{-1}(b)})$. Then, $c\in\tilde{R}$. Since $\bar{\sigma}$ is an $\L_{OR}$-automorphism of $M$, we have, $\bar{\sigma}(c ) = \tilde{\sigma}(c ) = \tau(\tilde{\sigma}(a_{i_1 - 1}), \ldots, \tilde{\sigma}(a_{i_m} - 1), \overline{\tilde{\sigma}(\sigma^{-1}(b))}) = \tau(a_{i_1}, \ldots, a_{i_m}, \bar{b}) = a$. In other words, $\tilde{\sigma}$ is surjective on $\tilde{R}$. Thus, $\tilde{\sigma}$ is an automorphism of $\tilde{R}$.

Finally, we show that $\tilde{\sigma}$ is (eventually) increasing: Since any element $c\in\tilde{R}$ is of the form $c = \tau(a_{i_1}, \ldots, a_{i_m}, \bar{b})$ for some $i_1, \ldots, i_m\in\Z$ and $\bar{b}$ from $R$, it is easy to see that the sequence $\langle a_i : i\in\Z\rangle$ is cofinal in $\tilde{R}$. Thus, for any $c >> R$, in particular for any $c > a_0$, there exists $i\in\Z$ such that $a_{i} \le c < a_{i+1}$. But then, we have $\sigma(c ) \ge \sigma(a_{i}) = a_{i + 1} > c$. Thus,
\begin{eqnarray*}
& \tilde{R} & \models\forall x\;(x > a_0\rightarrow x < \sigma(x)) \\
\mbox{i.e.,} & \tilde{R} &\models\exists y\forall x\;(x > y\rightarrow x < \sigma(x))
\end{eqnarray*}
Hence, $\tilde{\sigma}$ is (eventually) increasing, and so, $(\tilde{R}, \tilde{\sigma})\models\RCF^{+}_\sigma$.
\end{proof}

Now we are ready to prove the theorem.

\begin{thm}
$\RCF^+_\sigma$ does not have a model companion in $\L_{OR, \sigma}$.
\end{thm}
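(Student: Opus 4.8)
The plan is to avoid any direct compactness construction and instead exploit Lemma 5.2 to reduce the statement to the Kikyo--Shelah theorem (Theorem 1.2). The key observation is that $\RCF_\sigma$ and $\RCF^+_\sigma$ are mutually model-consistent in the language $\L_{OR,\sigma}$: on one hand every model of $\RCF^+_\sigma$ is already a model of $\RCF_\sigma$, since $\RCF^+_\sigma$ is obtained from $\RCF_\sigma$ merely by adjoining the sentence $\exists y\forall x\,(x>y\to x<\sigma(x))$; on the other hand, Lemma 5.2 shows that every model $(R,\sigma)$ of $\RCF_\sigma$ embeds into a model $(\tilde R,\tilde\sigma)$ of $\RCF^+_\sigma$. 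Hence each theory has every one of its models embedding into a model of the other.

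Mutual model-consistency forces the two theories to have the same universal consequences, i.e. $(\RCF_\sigma)_\forall = (\RCF^+_\sigma)_\forall$. Indeed, a structure is a model of $(\RCF_\sigma)_\forall$ exactly when it embeds as a substructure into some model of $\RCF_\sigma$; by Lemma 5.2 that model embeds further into a model of $\RCF^+_\sigma$, so the substructure is also a model of $(\RCF^+_\sigma)_\forall$. The reverse inclusion is immediate since $\RCF^+_\sigma\vdash\RCF_\sigma$. I would record this equality of universal theories as the central step.

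Now I would invoke the standard fact that the existence and identity of a model companion depend only on the universal part of a theory: two theories with the same universal consequences have the same class of existentially closed models, so one admits a model companion if and only if the other does, and in that case the two model companions coincide. Consequently $\RCF^+_\sigma$ has a model companion in $\L_{OR,\sigma}$ if and only if $\RCF_\sigma$ does. But $\RCF$ has the strict order property, witnessed by the definable linear order $<$, so by the Kikyo--Shelah theorem (Theorem 1.2) the theory $\RCF_\sigma$ has no model companion in $\L_{OR,\sigma}$. Therefore neither does $\RCF^+_\sigma$, which is exactly the claim.

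The only genuinely substantive input is Lemma 5.2, which is already established; the rest is soft model theory. The step I would treat most carefully is the passage from mutual model-consistency to equality of universal theories and then to coincidence of model-companion status, making sure that all embeddings involved are genuine $\L_{OR,\sigma}$-embeddings (which they are, since $\tilde\sigma$ extends $\sigma$ in Lemma 5.2) and that the theory $\RCF_\sigma$ to which Theorem 1.2 is applied is literally $\RCF$ together with the axioms asserting that $\sigma$ is an $\L_{OR}$-automorphism, with no further constraints. A direct compactness argument in the spirit of the proof of Theorem 3.2 --- producing a one-variable type $p(x)=\{a_i<x:i<\omega\}$ over a superexponentially growing indiscernible $\sigma$-orbit and a formula $\psi(x)=\exists y\,(a_0<y<x\wedge\sigma(y)=2y)$ isolating a slower multiplicative rate (legitimate because $\sigma$ fixes $2$) --- would also work as a fallback, but it is considerably longer and is rendered unnecessary by the reduction above.
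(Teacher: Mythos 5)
Your proof is correct, but it takes a genuinely different route from the paper. The paper gives a self-contained compactness argument in the style of its Theorem 3.2 (and of Kikyo--Shelah's original technique): assuming a model companion $T_A$ exists, it builds a saturated model $(N,\sigma)\models T_A$ containing a superexponentially growing indiscernible sequence, and exhibits a type $p(x)=\{a_i<x : i<\omega\}$ and formula $\psi(x)=\exists y\,(a_0<y<x\wedge\sigma(y)\le y^k)$ with $p\vdash\psi$ but no finite subset of $p$ entailing $\psi$, contradicting saturation; Lemma 5.2 is used \emph{locally} there, to upgrade the $\RCF_\sigma$-extension realizing the witness $\sigma(d)=d^2$ to an $\RCF^+_\sigma$-extension, so that existential closedness of the small model applies. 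You instead use Lemma 5.2 \emph{globally}: together with the trivial inclusion $\RCF^+_\sigma\supseteq\RCF_\sigma$ it gives mutual model-consistency, hence $(\RCF_\sigma)_\forall=(\RCF^+_\sigma)_\forall$, and since existence and identity of a model companion depend only on the universal part of a theory, the Kikyo--Shelah theorem applied to $\RCF_\sigma$ (legitimately, as $\RCF$ is model complete and has the strict order property) finishes the proof. All the steps are standard and sound; one small point worth phrasing carefully is that the invariance statement is cleanest in the form ``$T^*$ is a model companion of $T$ iff $T^*$ is model complete and $T^*_\forall=T_\forall$,'' which sidesteps any definitional quibble about whether existentially closed models of $T$ are required to model $T$ itself. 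The trade-off: your argument is shorter, modular, and matches the philosophy announced in the paper's introduction (model-companion status is a property of the universal theory), and it generalizes to any restriction on the automorphism that can always be realized in an extension; the paper's argument is longer but self-contained --- it does not rely on the precise statement of the cited Kikyo--Shelah theorem as a black box, and it displays the concrete obstruction (the non-existentially-definable ``slow'' witness) that the rest of the paper is organized around.
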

\begin{proof}
Suppose $\RCF^+_\sigma$ has a model companion, say $T_A$. Recall that $\RCF$ is a complete, model complete theory.

As in Lemma 5.2, let $(M, \sigma)$ be a model of $\RCF^+_\sigma$ extending $(\R_{\alg}, \id)$ such that there is an $\L_{OR}$-indiscernible sequence $\langle a_i: i < \omega\rangle$ in $M$ with $\R_{\alg} < a_i$ and $a_j^n < a_i$ for all $n < \omega$ and $j < i < \omega$. Extend $(M, \sigma)$ to a model $(N, \sigma)$ of $T_A$. Without loss of generality, we may assume $(N, \sigma)$ is sufficiently saturated.

Fix any $2 \le k < \omega$ and consider,
\begin{eqnarray*}
p(x) & = & \{a_i < x : i < \omega\} \\
\psi(x) & = & \exists y\;(a_0 < y < x\;\wedge\;\sigma(y) \le y^k)
\end{eqnarray*}

\textbf{Claim:} In $(N, \sigma)$,
\begin{eqnarray*}
(1) & p(x)\vdash\psi(x), & \mbox{ and} \\
(2) & q(x)\not\vdash\psi(x) & \mbox{ for any finite } q(x)\subset p(x).
\end{eqnarray*}

We show $(2)$ first. Suppose $q(x) \subset \{a_i < x : i < L\}$ for some $L < \omega$. Then, $a_L\in q(N)$. By way of contradiction, suppose that $a_L\in\psi(N)$. Let $y_0$ witness $\psi(a_L)$. Thus, we have
$$a_0 < y_0 < a_L \;\wedge\; \sigma(y_0) \le y_0^k.$$
Without loss of generality, there exists $0\le i < L$ such that $a_i < y_0 < a_{i+1}$. Therefore, 
$$a_{i + 2} = \sigma^2(a_i) < \sigma^2(y_0) \le y_0^{k^2} < a_{i+1}^{k^2},$$
which is a contradiction.

Now we prove $(1)$. Let $c\in p(N)$. Then $c > a_i$ for all $i < \omega$. Let $(M', \sigma|_{M'})$ be a countable $\L_{OR, \sigma}$-elementary substructure of $(N, \sigma)$ such that $a_0, c\in M'$. Now consider the following type $q(x)$ over $M'$:
\begin{eqnarray*}
q(x) & = & \{m < x : m\in M' \mbox{ and } m < a_i \mbox{ for some } i < \omega\} \\
&& \bigcup\;\{x < m : m\in M' \mbox{ and } a_i < m \mbox{ for all } i < \omega\}.
\end{eqnarray*}
It is easy to see that any finite subset of $q(x)$ is realized in $M'$ by some $a_i$. Hence, $q(x)$ is finitely consistent and thus consistent. Let $d\in N$ realize $q(x)$. Clearly $d\in N\setminus M'$. Moreover, $d > a_i$ for all $i < \omega$, i.e., $d\in p(N)$, and $d < c^n$ for all $n\in\Z$. Thus, $d^2 > a_i$ for all $i < \omega$, and $d^2 < c^n$ for all $n\in\Z$. In particular, $d^2\not\in M'$. By o-minimality of $\RCF$, $d$ and $d^2$ have the same type over $M'$ as they belong to the same cut of $M'$. Extend $\sigma$ to $M'\cup\{d, d^2\}$ by defining $\sigma(d) = d^2$. This is a partial elementary map and thus there is some $(\tilde{M}, \tilde{\sigma})\models\RCF_\sigma$ extending $(M', \sigma)$ such that $d\in\tilde{M}$ and $\tilde{\sigma}(d) = d^2$. By Lemma 5.2, there is some $(M'', \sigma'')\models\RCF^{+}_\sigma$ extending $(\tilde{M}, \tilde{\sigma})$, and thus also extending $(M', \sigma)$. Now $c, d\in M''$, and so,
\begin{eqnarray*}
(M'', \sigma'') & \models & a_0 < d < c\;\wedge\; \sigma(d) = d^2 \le d^k \\
\mbox{i.e.,}\;\; (M'', \sigma'') & \models & \exists y\;(a_0 < y < c\;\wedge\;\sigma(y) \le y^k)
\end{eqnarray*}
Since $(M', \sigma)$ is a model of $T_A$, it is an existentially closed model of $\RCF^+_\sigma$. Moreover, $(M', \sigma)\subseteq (M'', \sigma'')$. Thus, the formula $(a_0 < y < c \;\wedge\; \sigma(y) \le y^k)$ has a solution in $(M', \sigma)$. Hence, $(M', \sigma)\models\psi(c)$, and therefore, $(N, \sigma)\models\psi(c)$. 
\end{proof}

Question now arises: can we put more restrictions on the automorphism to get a model complete $\L_{OR, \sigma}$-theory extending the theory of ordered fields with an automorphism? The main problem is that there is a nontrivial subfield, namely the fixed field of $\sigma$, that causes a lot of trouble. We leave this as an open problem.
\begin{prob}
Is there any model complete extension of $\RCF_\sigma$ in $\L_{OR, \sigma}$?
\end{prob}

\section{appendix}
We describe here our second construction, namely a quotient construction, that we mentioned in Section 4.2 for producing more model complete theories of ordered abelian groups with an automorphism.

\begin{defn}
Let $G$ be an ordered abelian group with an automorphism $\sigma$, and $H$ be a convex subgroup of $G$ closed under $\sigma$. Let $G/H$ be the quotient group, which we turn into an ordered difference abelian group as follows:
\begin{eqnarray*}
& & G/H := \{g + H \;|\; g\in G\} \\
& & (g_1 + H) + (g_2+ H) := (g_1 + g_2) + H \\
& & -(g+ H) := (-g) + H \\
& & 0 := 0 + H = H \\
& & (g_1 + H) < (g_2 + H) :\iff g_1 < g_2 \mbox{ and } g_2 - g_1\not\in H \\
& & (g_1 + H) = (g_2 + H) :\iff g_2 - g_1\in H\\
& & \sigma(g + H) := \sigma(g) + H
\end{eqnarray*}
\end{defn}

We now prove the following theorem about quotient construction.
\begin{thm}
Let $H$ be a convex subgroup of $G$ closed under $\sigma$. Suppose $H$ is definable in $G$ by the formula $\varphi_H(x)$. Then, for any $\L_{OG, \sigma}$-formula $\psi(\bar{x})$, there is a formula $\psi^G(\bar{x})$ of quantifier rank at least as high as that of $\psi(\bar{x})$ such that for any tuple $\bar{g} = (g_1, g_2, \ldots)$ from $G$,
$$G/H\models \psi(\overline{g + H}) \iff G\models\psi^G(\bar{g}).$$ 
\end{thm}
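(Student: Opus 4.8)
The plan is to proceed by induction on the complexity of the formula $\psi$, constructing $\psi^G$ by a syntactic translation that simulates the quotient structure $G/H$ inside $G$. The key observation driving the whole argument is that the quotient's atomic relations are all definable in $G$ using the formula $\varphi_H$: equality $g_1 + H = g_2 + H$ translates to $\varphi_H(g_1 - g_2)$, the order $g_1 + H < g_2 + H$ translates to $(g_1 < g_2) \wedge \neg\varphi_H(g_1 - g_2)$, while the group operations $+$, $-$, $0$ and the automorphism $\sigma$ translate to themselves (since $H$ is $\sigma$-closed and a subgroup, these operations descend to the quotient and lift back without modification). This handles the base case of atomic $\psi$.

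For the inductive step, the Boolean connectives are immediate: set $(\neg\psi)^G := \neg(\psi^G)$ and $(\psi_1 \wedge \psi_2)^G := \psi_1^G \wedge \psi_2^G$, which preserve quantifier rank exactly and clearly respect the biconditional. The only genuinely substantive case is the \emph{existential quantifier}. Here I would define
\begin{equation*}
(\exists y\, \psi(\bar{x}, y))^G := \exists y\, \psi^G(\bar{x}, y).
\end{equation*}
The forward and backward directions require checking that quantifying over cosets $g + H$ in $G/H$ corresponds correctly to quantifying over representatives $g$ in $G$. The $(\Leftarrow)$ direction is free: a witness $g \in G$ for $\psi^G$ projects to the coset $g + H$, which witnesses $\psi$ in the quotient by the induction hypothesis. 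The $(\Rightarrow)$ direction is where the main work lies: given a coset witnessing $\psi$ in $G/H$, I must produce an actual element of $G$, which is simply any representative $g$ of that coset; the induction hypothesis applied to $\psi$ then gives $G \models \psi^G(\bar{g}, g)$. Since each quantifier in $\psi$ is matched by exactly one quantifier in $\psi^G$ (with at most extra quantifier-free matrix from the atomic translations), the quantifier rank of $\psi^G$ is at least that of $\psi$, as claimed.

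\textbf{The main obstacle} is ensuring the induction hypothesis is stated with enough uniformity that the quantifier case goes through cleanly---specifically, the translation must commute with substitution of representatives, so that the choice of coset representative is immaterial to the truth value of $\psi^G$. This is where the \emph{definability} of $H$ via $\varphi_H$ is essential and not merely convenient: without a formula defining $H$, the atomic translations (especially the equality and order relations) could not be expressed in the first-order language of $G$, and the whole scheme would collapse. I expect that once the atomic cases are verified against Definition 6.1's explicit descriptions of the quotient operations, the inductive cases are routine bookkeeping, with the only care needed being the verification that well-definedness on cosets (which Definition 6.1 implicitly requires, using that $H$ is a $\sigma$-closed convex subgroup) is correctly reflected by the syntactic translation.
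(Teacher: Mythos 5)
Your proposal is correct and follows essentially the same route as the paper's own proof: the same atomic translations via $\varphi_H$ (equality to $\varphi_H(t_1 - t_2)$, order to the conjunction with $\neg\varphi_H$), the same direct handling of Boolean connectives, and the same treatment of the existential quantifier by passing between cosets and representatives, with representative-independence following from the uniformity of the induction hypothesis. The only cosmetic difference is that the paper isolates your remark that the operations ``descend and lift back'' as a separate term lemma (that $t^{G/H}(\overline{g+H}) = t^G(\bar{g}) + H$ for every term $t$), proved by induction on terms before the main induction on formulas.
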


Before proving this theorem, we prove a corresponding lemma about the terms.
\begin{lem}
For all terms $t(x_1, \ldots, x_n)$, and all tuples $(g_1, \ldots, g_n)$ from $G$,
$$t^{G/H}(g_1 + H, \ldots, g_n + H) = t^{G}(g_1, \ldots, g_n) + H.$$
\end{lem}
\begin{proof}
We prove this by induction on the length of the terms.
\begin{itemize}
\item[(i)] $t = 0: t^{G/H}(g_1 + H, \ldots, g_n + H) = 0 + H = t^{G}(g_1, \ldots, g_n) + H$.
\item[(ii)] $t = v_i : t^{G/H}(g_1 + H, \ldots, g_n + H) = g_i + H = t^{G}(g_1, \ldots, g_n) + H$.
\item[(iii)] $t = -v_i : t^{G/H}(g_1 + H, \ldots, g_n + H) = -(g_i + H) = (-g_i) + H = t^{G}(g_1, \ldots, g_n) + H.$
\item[(iv)] $t = \sigma(v_i) : t^{G/H}(g_1 + H, \ldots, g_n + H) = \sigma(g_i + H) = \sigma(g_i) + H = t^{G}(g_1, \ldots, g_n) + H.$
\item[(v)] $t = v_i + v_j : t^{G/H}(g_1 + H, \ldots, g_n + H) = (g_i + H) + (g_j + H) = (g_i + g_j) + H = t^{G}(g_1, \ldots, g_n) + H.$
\end{itemize}
This proves the base case. For the induction step, note that a general term in this language is basically a linear difference polynomial $L(x_1, \ldots, x_n)\in\Z[\bar{x}, \sigma(\bar{x}), \ldots,$ $\sigma^m(\bar{x})]$ for some $m$. And it follows easily from induction hypothesis that
$$L^{G/H}(g_1 + H, \ldots, g_n + H) = L^{G}(g_1, \ldots, g_n) + H.$$
\end{proof}

This completes the proof of the lemma. Now we prove the theorem.
\begin{proof}
We prove this by induction on the quantifier rank $r$ of $\psi$.

The base case is that of $r = 0$ when $\psi$ is an atomic formula. There are 2 types of atomic formulas in the language $\L_{OG, \sigma}$, namely, $t_1 = t_2$ and $t_1 < t_2$.
\begin{enumerate}
\item $\psi(\bar{x}) := t_1({\bar{x}}) = t_2(\bar{x}) : $ In this case,
\begin{eqnarray*}
& & G/H \models \psi(\overline{g + H}) \\
& \iff & t_1^{G/H}(\overline{g + H}) = t_2^{G/H}(\overline{g + H}) \\
& \iff & t_1^{G}(\bar{g}) + H = t_2^{G}(\bar{g}) + H \mbox{ (by Lemma 6.3)} \\
& \iff & t_2^{G}(\bar{g}) - t_1^{G}(\bar{g}) \in H \\
& \iff & G \models \varphi_H(t_2(\bar{g}) - t_1(\bar{g}))
\end{eqnarray*}
Thus, let $\psi^G(\bar{x}) := \varphi_H(t_2(\bar{x}) - t_1(\bar{x}))$.
\item $\psi(\bar{x}) := t_1(\bar{x}) < t_2(\bar{x}) : $ In this case,
\begin{eqnarray*}
& & G/H \models \psi(\overline{g + H}) \\
& \iff & t_1^{G/H}(\overline{g + H}) < t_2^{G/H}(\overline{g + H}) \\
& \iff & t_1^{G}(\bar{g}) + H < t_2^{G}(\bar{g}) + H \mbox{ (by Lemma 6.3)} \\
& \iff & t_1^{G}(\bar{g}) < t_2^{G}(\bar{g}) \mbox{ and } t_2^{G}(\bar{g}) - t_1^{G}(\bar{g}) \not\in H \\
& \iff & G \models \psi(\bar{g}) \wedge \neg\varphi_H(t_2(\bar{g}) - t_1(\bar{g}))
\end{eqnarray*}
Thus, let $\psi^G(\bar{x}) := \psi(\bar{x})\wedge\neg\varphi_H(t_2(\bar{x}) - t_1(\bar{x}))$.
\end{enumerate}

Now we consider the boolean connectives. Suppose the induction hypothesis holds for $\phi$ and $\theta$, witnessed by the corresponding formulas $\phi^G$ and $\theta^G$.
\begin{enumerate}
\item $\psi := \phi\wedge\theta : $ In this case, take $\psi^G := \phi^G\wedge\theta^G$.
\item $\psi := \neg\phi : $ In this case, take $\psi^G := \neg\phi^G$.
\end{enumerate}

This leaves us with the last case, namely, the case of quantifiers. Identifying $\forall = \neg\exists\neg$, it is enough to deal with the case of a single existential quantifier.

$\psi(\bar{x}) := \exists y\phi(y, \bar{x}) : $ In this case,
\begin{eqnarray*}
& & G/H \models \psi(\overline{g + H}) \\
& \iff & G/H \models \phi(g' + H, \overline{g + H}) \mbox{ for some } g' + H\in G/H \\
& \iff & G \models \phi^G(g', \bar{g}) \mbox{ for some } g'\in G \mbox{ (by induction hypothesis on } \phi)\\
& \iff & G \models \exists y\phi^G(y, \bar{g})
\end{eqnarray*}
Thus, let $\psi^G(\bar{x}) := \exists y\phi^G(y, \bar{x})$.
\end{proof}

As an immediate corollary, we get
\begin{cor}
Let $G$ be a model of $OAG_\sigma$ and $H$ be a convex subgroup of $G$ closed under $\sigma$. Suppose $H$ is definable in $G$. Then,
\begin{itemize}
\item[(1)] If $G$ eliminates quantifiers, then so does $G/H$.
\item[(2)] If $G$ is model complete, then so is $G/H$.
\end{itemize}
\end{cor}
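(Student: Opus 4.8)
The plan is to deduce both parts from the syntactic transfer in Theorem 6.2 together with the corresponding property of $G$, the only real work being to descend the resulting $G$-formula back to a formula over $G/H$. I will treat part (1) in detail; part (2) is parallel, with ``quantifier-free'' replaced by ``existential.''

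First I would record that, since $H$ is definable and $G$ has QE, the defining formula $\varphi_H$ may be taken quantifier-free, and inspecting the construction in Theorem 6.2 then shows that $(\cdot)^G$ sends quantifier-free formulas to quantifier-free formulas and, more generally, leaves the quantifier prefix of $\psi$ unchanged. Because $Th(G/H)$ is the complete theory of a single structure, it suffices to produce, for each $\psi(\bar x)$, a quantifier-free $\psi'(\bar x)$ with $G/H\models\forall\bar x(\psi\leftrightarrow\psi')$; the resulting sentence then lies in $Th(G/H)$ automatically. Given $\psi$, I would form $\psi^G$ and use QE of $G$ to get a quantifier-free $\chi(\bar x)$ with $Th(G)\models\forall\bar x(\psi^G\leftrightarrow\chi)$, so that by Theorem 6.2
\begin{equation*}
G/H\models\psi(\overline{g+H})\iff G\models\chi(\bar g)\qquad\text{for all }\bar g\in G.
\end{equation*}

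The crux is to convert $\chi$ into a quantifier-free $G/H$-formula. The key observation is that the set defined by $\chi$ is invariant under the fibers of the quotient map $\bar g\mapsto\overline{g+H}$, since its truth depends only on $\overline{g+H}$. Writing $\chi$ in terms of the finitely many linear forms $s_1,\dots,s_m$ that it mentions, I would partition $G^{\,n}$ into the quantifier-free, fiber-invariant cells determined by which $\varphi_H(s_i)$ hold. On each such cell the forms with $s_i\notin H$ have fiber-invariant sign, which descends to the $G/H$-atom $s_i\lessgtr 0$, whereas for the forms with $s_i\in H$ divisibility of the coordinates forces $s_i$ to range over all of $H$ within each fiber; fiber-invariance of $\chi$ then makes the atoms ``$s_i=0$'' and ``$s_i>0$'' inessential on that cell. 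Collecting cells yields the desired $\psi'$ via the dictionary $\varphi_H(s)\leftrightarrow(s=0)$ and $(s<0\wedge\neg\varphi_H(s))\leftrightarrow(s<0)$ read off from Theorem 6.2, and establishes (1). For (2), model completeness of $G$ gives $\chi$ existential instead, and the same fiber analysis, now allowed to reintroduce existential quantifiers ranging over the fibers, produces an existential $G/H$-formula, whence $Th(G/H)$ is model complete.

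I expect the fiber-descent step to be the main obstacle: one must verify that fiber-invariance genuinely renders the ``in-$H$'' atoms redundant, which relies on each nontrivial linear form being surjective onto $H$ within a fiber --- a consequence of divisibility of the div-MODAG coordinates together with $H$ being convex and $\sigma$-closed. The order atoms are the delicate case, since $G/H$ cannot detect an equality in $G$ whose witnessing difference lies in $H$; it is precisely fiber-invariance of $\chi$ as a whole (rather than of its individual atoms) that resolves this.
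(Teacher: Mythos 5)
Your opening reduction is exactly the derivation the paper intends: the paper offers no separate proof of this corollary (it is presented as immediate from Theorem 6.2), and the intended argument is precisely your composite --- take $\varphi_H$ quantifier-free, note that $\psi\mapsto\psi^G$ preserves the quantifier prefix, apply quantifier elimination of $G$ to $\psi^G$, and use the fact that an equivalence verified in the single structure $G/H$ automatically lies in $Th(G/H)$. You have also correctly put your finger on the one step that is genuinely not immediate and which the paper glosses over: converting the fiber-invariant quantifier-free $G$-formula $\chi$ back into a quantifier-free formula over $G/H$ (and, for part (2), the analogous existential descent, where one must additionally note that $\varphi_H$ can only be taken existential-and-universal rather than quantifier-free).

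However, your proposed descent argument does not work. First, it appeals to ``divisibility of the div-MODAG coordinates,'' but the hypotheses of the corollary say only that $G$ is a model of $\OAG_\sigma$ with $H$ convex, $\sigma$-closed and definable; there are no div-MODAG coordinates and no divisibility to invoke. Second, even in the paper's motivating examples the key claim --- that a form $s_i$ with value in $H$ ranges over all of $H$ within each fiber --- is false. Take $G = \Q_2\oplus\Q_3$, $H = \Q_2\oplus 0$, and $s(x) = \sigma(x)-2x$: then $s(H) = \{0\}$, so $s$ is \emph{constant} on every fiber; the atoms ``$s=0$'' and ``$s>0$'' are fiber-invariant and by no means inessential, since $\chi(x) := (s(x)=0)$ defines $H$ itself, a union of fibers, and can be neither dropped nor replaced by a truth value. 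What makes descent succeed in this case is not redundancy of the atom but the extensional coincidence $\{\bar g : s(\bar g) = 0\} = \{\bar g : s(\bar g)\in H\}$, i.e., for forms annihilating $H$ the fine condition happens to agree with the coarse one --- a separate fact that needs its own proof and is exactly where structural information about $G$ must enter. Likewise, forms not annihilating $H$ need not be surjective onto $H$ on fibers: with $G = \Q_2\oplus\Q_3\oplus\Q_5$, $H = \Q_2\oplus\Q_3\oplus 0$ and $s(x) = \sigma(x)-3x$, the image $s(H)$ is the proper convex subgroup $\Q_2\oplus 0\oplus 0$ of $H$. Since the inessentiality claim is the entire content of your descent step, and you explicitly defer the delicate order-atom case to ``fiber-invariance of $\chi$ as a whole'' without an argument, the crux remains unproven: what you have is the paper's (unstated) one-line derivation together with a patch that fails.
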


\bibliographystyle{amsplain}
\bibliography{references}

\end{document}